\documentclass[11pt]{amsart}
\usepackage{amssymb,amsmath,amsthm,amsfonts,amsopn,url,color,hyperref,enumitem}
\usepackage[all]{xy}
%\usetikzlibrary{decorations.markings}
%\tikzset{crossout/.style={
 %       decoration={markings,
  %          mark= at position 0.5 with {
   %             \node[transform shape] (tempnode) {$\backslash$};}},
    %    postaction={decorate}}}
%\newcommand{\tikst}[1]{\begin{tabular}{@{}c@{}}#1\end{tabular}}

\theoremstyle{plain}
\newtheorem{thm}{Theorem}[section]
\newtheorem{prop}[thm]{Proposition}
\newtheorem{lemma}[thm]{Lemma}
\newtheorem{cor}[thm]{Corollary}

\theoremstyle{definition}
\newtheorem{defn}[thm]{Definition}
\newtheorem*{defn*}{Definition}
\newtheorem*{question*}{Question}

\newtheorem{example}[thm]{Example}
\newtheorem*{example*}{Example}
\newtheorem{rem}[thm]{Remark}
\newtheorem*{rem*}{Remark}
\newcommand{\field}[1]{\mathbb{#1}}
\newcommand{\N}{\field{N}}
\newcommand{\Z}{\field{Z}}

\newcommand{\R}{\field{R}}
\newcommand{\F}{\field{F}}

\newcommand{\ideal}[1]{\mathfrak{#1}}
\newcommand{\m}{\ideal{m}}
\newcommand{\n}{\ideal{n}}
\newcommand{\p}{\ideal{p}}
\newcommand{\q}{\ideal{q}}

\newcommand{\func}[1]{\mathrm{#1} \,}

\newcommand{\Spec}{\func{Spec}}

\newcommand{\hgt}{\func{ht}}

\newcommand{\ra}{\rightarrow}

\DeclareMathOperator{\ann}{ann}

\newcommand{\be}{\begin{enumerate}}
\newcommand{\ee}{\end{enumerate}}

\newcommand{\li}%{{\mathrm{lic}}}
 {\leftfootline}

\renewcommand{\phi}{\varphi}

\DeclareMathOperator{\Max}{Max}

\DeclareMathOperator{\orc}{c}%{\Omega}
\newcommand{\frk}{\mathfrak{K}}
\DeclareMathOperator{\sK}{sK}

\author{Neil Epstein}
\address{Department of Mathematical Sciences \\ George Mason University \\ Fairfax, VA  22030}
\email{nepstei2@gmu.edu}

\author{Jay Shapiro}
\address{Department of Mathematical Sciences \\ George Mason University \\ Fairfax, VA  22030}
\email{jshapiro@gmu.edu}

\title[The Ohm-Rush content function II]{The Ohm-Rush content function II. Noetherian rings, valuation domains, and base change}
\subjclass[2010]{13B02, 13A15, 13B25, 13B40, 13F05}
%\keywords{content algebra, semicontent algebra, weak content algebra, Ohm-Rush, regular field extension, algebraically closed extension}

\date{May 9, 2018}
\begin{document}
\begin{abstract}
The notion of an Ohm-Rush algebra, and its associated content map, has connections with prime characteristic algebra, polynomial extensions, and the Ananyan-Hochster proof of Stillman's conjecture.   As further restrictions are placed (creating the increasingly more specialized notions of weak content, semicontent, content, and Gaussian algebras), the construction becomes more powerful. Here we settle the question in the affirmative over a Noetherian ring from \cite{nmeSh-OR} of whether a faithfully flat weak content algebra is semicontent (and over an Artinian ring of whether such an algebra is content), though both questions remain open in general.  We show that in content algebra maps over Pr\"ufer domains, heights are preserved and a dimension formula is satisfied.  We show that an inclusion of nontrivial valuation domains is a content algebra if and only if the induced map on value groups is an isomorphism, and that such a map induces a homeomorphism on prime spectra.  Examples are given throughout, including results that show the subtle role played by properties of transcendental field extensions.
\end{abstract}

\maketitle

\section{Introduction}\label{sec:intro}

In the 1970s, Ohm and Rush \cite{OhmRu-content, Ru-content} came up with an axiomatic theory to determine how close a faithfully flat ring map $R \rightarrow S$ is to acting like the polynomial extension map $R \rightarrow R[x]$.  The key idea is to generalize the notion of the ``content'' of a polynomial to an element of $S$ and then see which formulas the function satisfies.  In increasing order of specificity, one may ask (with some updated terminology) whether a faithfully flat $R$-algebra $S$ is \begin{enumerate}
\item Ohm-Rush (from \cite{OhmRu-content}, terminology from \cite{nmeSh-OR}),
\item weak content \cite{Ru-content},
\item semicontent \cite{nmeSh-OR},
\item content \cite{OhmRu-content}, or 
\item Gaussian \cite{Nas-ABconj}.
\end{enumerate}

Dedekind and Mertens \cite{Ded-DM, Mer-DM} essentially showed that the polynomial extension is a content algebra.  Gauss classically showed that the map $\Z \rightarrow \Z[x]$ is Gaussian, and Pr\"ufer \cite{Pr-DM} showed a polynomial extension is Gaussian whenever the base ring $R$ is a Pr\"ufer domain.  When $R$ is an integral domain, a converse was shown independently by Tsang \cite{Ts-Gauss} and Gilmer \cite{Gil-hilf}.
We showed that the power series extension map $R \rightarrow R[[x]]$ is a content algebra whenever $R$ is Noetherian \cite{nmeSh-DMpower}, but only in special cases when $R$ is a valuation ring \cite[Corollary 4.11]{nmeSh-OR}.

The question arises then of when some or all of these conditions are equivalent.  We have no examples among faithfully flat ring maps to demonstrate that any two of  the middle three properties are distinct.  However, we show that if $R$ is Noetherian and $R \rightarrow S$ is weak content, it is semicontent (see Corollary~\ref{weaksemi}).  Over an Artinian ring, conditions 2-4 are equivalent (see Theorem~\ref{thm:Artequiv}).  In the case where R is a Pr\"ufer domain, even conditions 2-5 are equivalent \cite[Proposition 4.7]{nmeSh-OR}.

As is often the case, more structural information is available in the case where the base ring is (or both rings are) Pr\"ufer or Dedekind.  We get a particularly nice description of the content function in the case of extensions of the form $K[x] \rightarrow L[x]$, where $L/K$ is a field extension.

In another strand of research, the Ohm-Rush property has also been called ``intersection-flatness'' \cite[p. 41]{HHbase} and has been used to investigate properties of the Frobenius endomorphism in prime characteristic algebra \cite{HHbase, Ka-param, Sha-nonred}.  Outside of prime characteristic algebra, some of these ideas have even been useful in the recent solution \cite{AnHo-small} to Stillman's conjecture \cite{PeSt-open} about bounds on projective dimension.  It is noted in \cite[p.41]{HHbase} that if $R$ is complete Noetherian local and $R \rightarrow S$ is a flat local homomorphism,  then it follows from Chevalley's theorem \cite[Lemma 7]{Che-local} that $S$ is an intersection-flat (i.e. Ohm-Rush) $R$-algebra.

An analogue of the Ohm-Rush content function has also recently been analyzed  \cite{Nas-consemi, Nas-zdsemi} in semirings (i.e. commutative ring-like structures where subtraction is impossible or disallowed).

The structure of this paper is as follows.  In \S\ref{sec:basics}, we recall known information about Ohm-Rush content.  In \S\ref{sec:localglobal} we establish that the properties of being a semicontent algebra, a content algebra, a weak content algebra, and in some cases also a content algebra, are local.  See Propositions~\ref{localsemicontent}--\ref{localcontent}.  We also establish there that base change by a factor of the base ring is harmless. See Lemma~\ref{lem:factOR} and Proposition~\ref{pr:factorrings}.  In \S\ref{sec:NoethArt}, after proving a general fact (see Theorem~\ref{thm:primary}) about primary ideals extending to primary ideals in the context of Noetherian base rings and INC extensions, we conclude in Corollary~\ref{weaksemi} that for faithfully flat maps over a Noetherian base ring, the weak content and semicontent properties coincide.  We then connect this with some results of Ananyan and Hochster in Corollary~\ref{cor:AH}.

The above connection leads naturally to an investigation of when base change of a polynomial ring over a field by an extension field is a weak content, semicontent, or content algebra.  This in turn depends on subtle properties of field extensions.  See Corollary~\ref{cor:aclosed}, Proposition~\ref{pr:regext}, Lemma~\ref{lem:regext}, Proposition~\ref{pr:purecontent}, and the limiting Example~\ref{ex:aclosed}. We then show in Proposition~\ref{pr:weaksemiINC} that the weak and semicontent properties also coincide for faithfully flat INC extensions.  The section then concludes in showing that the content, semicontent, and weak content properties coincide for faithfully flat extensions of an Artinian base ring  (see Theorem~\ref{thm:Artequiv}), even though the Gaussian property does not (see Example~\ref{ex:Art}).  

Section~\ref{sec:valbase} consists of an analysis of the Ohm-Rush content function in the case where the base ring is (locally) a valuation domain.  After first giving a criterion for Ohm-Rushness over a valuation domain (see Proposition~\ref{pr:valOR}), we show in Theorem~\ref{thm:height} that heights of primes are preserved in content algebra maps over Pr\"ufer domains.  In the special case where the target ring is catenary, this allows us to conclude a dimension formula (cf. Corollary~\ref{cor:dimcatenary}) that aligns with other dimension formulas in the literature.

In the final section \S\ref{sec:valval}, we further specialize to the case where \emph{both} the base and target rings are (locally) valuation domains.  A key technical result is that in a content algebra map of nontrivial Pr\"ufer domains, maximal ideals always extend to maximal ideals (see Corollary~\ref{cor:maxextend}).  Next we obtain a clean formula for the content of an element in a content algebra map of valuation rings (see Proposition~\ref{pr:cval}).  This leads us to Example~\ref{ex:polynomial} and Proposition~\ref{pr:irredfield}, where we see that $K[x] \rightarrow L[x]$ is a content algebra if and only if $K$ is algebraically closed in $L$, and we obtain a clean formula for the content function in this case.  In Theorem~\ref{thm:valgroups}, we show that a map of valuation rings is a content algebra only if it induces an isomorphism on value groups.  In Theorem~\ref{thm:homeo}, we show that any such map induces a homeomorphism on prime spectra.  We close with a method for constructing another class of content algebras that do not fall into any of the preceding categories (see Example~\ref{ex:end}).

\section{Basics}\label{sec:basics}
We commence with the basic definitions of this investigation.
\begin{defn}\label{def:orc}
Let $R$ be a ring, $M$ an $R$-module, and $f\in M$.  Then the \emph{(Ohm-Rush) content} of $f$ (described in \cite{OhmRu-content}; current nomenclature from \cite{nmeSh-OR}) is given by  \[
\orc(f) := \bigcap\{I \subseteq R \text{ ideal } \mid f \in IM\}.\footnote{In \cite{nmeSh-OR}, we use the symbol $\Omega$ for this function.}
\]
If $f\in \orc(f)M$ for all $f\in M$, we say that $M$ is an \emph{Ohm-Rush module}; if $M$ is moreover an $R$-algebra, we say that it is an \emph{Ohm-Rush algebra} over $R$.
\end{defn}

\begin{defn}\label{defs}
Let $R \rightarrow S$ be an Ohm-Rush algebra.  We say that it is  \begin{enumerate}
\item a \emph{weak content algebra} \cite{Ru-content} if $\sqrt{\orc(fg)} = \sqrt{\orc(f)\orc(g)}$ for all $f, g \in S$,
\item a \emph{semicontent algebra} \cite{nmeSh-OR} if it is faithfully flat and for any multiplicative set $W \subseteq R$, whenever $f,g \in S$ with $\orc(f)_W = R_W$, we have $\orc(fg)_W = \orc(g)_W$,
\item a \emph{content algebra} \cite{OhmRu-content} if it is faithfully flat and for any $f, g \in S$, there is some $n\in \N$ with $\orc(f)^n \orc(g) = \orc(f)^{n-1} \orc(fg)$,
\item a \emph{Gaussian algebra} \cite{Nas-ABconj} if it is faithfully flat and for any $f, g \in S$, we have $\orc(fg) = \orc(f) \orc(g)$.
\end{enumerate}
\end{defn}

We recall some properties in the following proposition.

\begin{prop}\label{pr:omnibus}
Let $R$ be a commutative ring, and let $L$ be an $R$-module.  \begin{enumerate}
\item\label{it:OR12ii} \cite[1.2(ii)]{OhmRu-content} $L$ is an Ohm-Rush $R$-module if and only if for all collections $\{I_\alpha\}$ of ideals of $R$, we have \[
\left(\bigcap_\alpha I_\alpha\right)L = \bigcap_\alpha I_\alpha L.
\]
\item\label{it:ORflat}\cite[Corollary 1.6]{OhmRu-content} Let $L$ be an Ohm-Rush $R$-module.  It is flat over $R$ if and only if for all $x\in L$ and $r\in R$, we have $\orc(rx) = r\orc(x)$.  A flat Ohm-Rush module $L$ is faithfully flat if and only if no proper ideal of $R$ contains all ideals of the form $\orc(x)$, $x\in L$.
\item\label{it:OR31}\cite[Theorem 3.1]{OhmRu-content} Let $L$ be a flat Ohm-Rush module, and $W$ a multiplicatively closed subset of $R$.  Then $L_W$ is an Ohm-Rush $R_W$-module, and for any $g\in L$ and $w\in W$, we have $\orc_W(g/w) = \orc(g)_W$, where $\orc$ is the content function on $L$ with respect to $R$ and $\orc_W$ is the content function on $L_W$ with respect to $R_W$.
\item\label{it:OR62}\cite[Theorem 6.2]{OhmRu-content} Let $R \rightarrow S$ be a content algebra.  Let $W$ be a multiplicatively closed subset of $S$ such that for all $w\in W$, we have $\orc(w)\cap W \neq \emptyset$.  Then $S_W$ is a content $R_{W \cap R}$-algebra, and for any $g\in S$ and $w\in W$, we have $\orc_W(g/w) = \orc(g)_{W \cap R}$.
\item\label{it:Ruprimes}\cite[Theorem 1.2]{Ru-content} Let $R \rightarrow S$ be an Ohm-Rush algebra.  It is a weak content algebra if and only if for all $\p \in \Spec R$, either $\p S=S$ or $\p S \in \Spec S$.
\item\label{it:Rumult}\cite[Proposition 1.1(i)]{Ru-content} Let $R \rightarrow S$ be an Ohm-Rush algebra and $f, g\in S$.  Then $\orc(fg) \subseteq \orc(f)\orc(g)$. 
\end{enumerate}
\end{prop}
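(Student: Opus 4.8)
The statement collects results of Ohm--Rush \cite{OhmRu-content} and Rush \cite{Ru-content}, so my plan is to recall each argument rather than prove anything new; I will give the two parts with quick self-contained proofs in full and indicate the structure of the rest.

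For part~\ref{it:OR12ii} I would argue as follows. If $L$ is Ohm--Rush and $x \in \bigcap_\alpha I_\alpha L$, then $\orc(x) \subseteq I_\alpha$ for every $\alpha$ by the definition of content, so $\orc(x) \subseteq \bigcap_\alpha I_\alpha$, whence $x \in \orc(x) L \subseteq (\bigcap_\alpha I_\alpha) L$; the reverse containment is automatic. Conversely, applying the displayed identity to the family of all ideals $I$ with $x \in IL$ yields $x \in \orc(x) L$, so $L$ is Ohm--Rush. For part~\ref{it:Rumult} I would write $f = \sum_i a_i s_i$ and $g = \sum_j b_j t_j$ with $a_i \in \orc(f)$, $b_j \in \orc(g)$, and $s_i, t_j \in S$ (possible since $L$ is Ohm--Rush and $f, g \in S$); then $fg = \sum_{i,j} a_i b_j s_i t_j \in \orc(f)\orc(g)S$, and since $\orc(fg)$ is contained in every ideal $I$ with $fg \in IS$, we get $\orc(fg) \subseteq \orc(f)\orc(g)$.

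For the remaining parts I would follow the cited sources. Part~\ref{it:ORflat} runs the equational criterion for flatness through the content function: a linear relation over $R$ among elements of $L$ can be ``rewritten'' inside $L$ precisely when $\orc(rx) = r\,\orc(x)$ holds for all $r \in R$ and $x \in L$; and the faithful-flatness clause follows because $\m L \ne L$ for a maximal ideal $\m$ is equivalent to saying no $\orc(x)$ is contained in $\m$. Parts~\ref{it:OR31} and~\ref{it:OR62} amount to checking that $\orc(g)_W$ (respectively $\orc(g)_{W \cap R}$) still enjoys the defining minimality property after localization, using flatness to commute intersections of ideals with localization; part~\ref{it:OR62} additionally needs the hypothesis $\orc(w) \cap W \ne \emptyset$ to keep the localized algebra faithfully flat. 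Part~\ref{it:Ruprimes} is obtained by rephrasing $\sqrt{\orc(fg)} = \sqrt{\orc(f)\orc(g)}$ prime-by-prime: it holds for all $f, g$ iff, for each $\p \in \Spec R$ with $\p S \ne S$, the condition $fg \in \p S$ forces $f \in \p S$ or $g \in \p S$, which is exactly primeness of $\p S$.

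The only real obstacle I anticipate is part~\ref{it:ORflat}: translating the flatness equational criterion into the clean multiplicative identity $\orc(rx) = r\,\orc(x)$ requires some care, whereas everything else is bookkeeping with the definition of content and with the behavior of ideal intersections under localization. Since all of this is in \cite{OhmRu-content, Ru-content}, in practice the proof is a pointer to those papers.
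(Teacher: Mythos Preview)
The paper gives no proof of this proposition at all; it is stated purely as a compendium of results from \cite{OhmRu-content} and \cite{Ru-content}, with citations in lieu of arguments. Your proposal therefore goes beyond what the paper does, and your sketches for parts~\ref{it:OR12ii}, \ref{it:OR31}, \ref{it:OR62}, \ref{it:Ruprimes}, and \ref{it:Rumult} are accurate reconstructions of the standard arguments.

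One genuine slip: in your sketch of the faithful-flatness clause in part~\ref{it:ORflat}, you write that ``$\m L \ne L$ for a maximal ideal $\m$ is equivalent to saying no $\orc(x)$ is contained in $\m$.'' That is not right. We have $\m L = L$ if and only if every $x\in L$ lies in $\m L$, i.e.\ if and only if $\orc(x)\subseteq \m$ for \emph{all} $x$. Hence $\m L\ne L$ is equivalent to \emph{some} $\orc(x)\not\subseteq\m$, not to \emph{no} $\orc(x)\subseteq\m$. The correct chain is: $L$ faithfully flat $\Leftrightarrow$ $\m L\ne L$ for every maximal $\m$ $\Leftrightarrow$ no maximal ideal contains every $\orc(x)$ $\Leftrightarrow$ no proper ideal contains every $\orc(x)$. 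With this correction your outline for part~\ref{it:ORflat} is fine, and your acknowledgment that the flatness $\Leftrightarrow$ $\orc(rx)=r\orc(x)$ equivalence requires the most work (via the equational criterion) is accurate.
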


We also recall that if $R \subseteq S$ is a faithfully flat ring map, then it is \emph{pure}, which implies that for any ideal $J$ of $R$, we have $JS \cap R = J$ (see \cite[Theorem 7.5]{Mats}).  The following result is \cite[Proposition 2.6]{nmeSh-OR}.

\begin{prop}
Any ring extension $R \subset S$ satisfies the following implications:
\begin{center}
(faithfully flat) Gaussian algebra $\implies$ content algebra \\
$\implies$ semicontent algebra $\implies$ weak content algebra.
\end{center}
\end{prop}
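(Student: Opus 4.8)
The plan is to verify the three implications one at a time, passing from the strongest property to the weakest, using only the Ohm-Rush hypothesis together with the facts recalled in Proposition~\ref{pr:omnibus} and the definitions in Definition~\ref{defs}. For \emph{(faithfully flat) Gaussian $\implies$ content}: both properties already include faithful flatness, so only the content identity needs checking, and it holds with $n=1$, since $\orc(fg)=\orc(f)\orc(g)$ is precisely $\orc(f)^{1}\orc(g)=\orc(f)^{0}\orc(fg)$.

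For \emph{content $\implies$ semicontent}: a content algebra is faithfully flat by definition, so I fix a multiplicative set $W\subseteq R$ and $f,g\in S$ with $\orc(f)_W=R_W$, choose $n\in\N$ with $\orc(f)^{n}\orc(g)=\orc(f)^{n-1}\orc(fg)$ in $R$, and localize this equality of ideals at $W$. Since localization commutes with finite products, hence with powers, of ideals, the localized identity reads $(\orc(f)_W)^{n}\,\orc(g)_W=(\orc(f)_W)^{n-1}\,\orc(fg)_W$; substituting $\orc(f)_W=R_W$ collapses this to $\orc(g)_W=\orc(fg)_W$, which is exactly the semicontent condition.

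For \emph{semicontent $\implies$ weak content}: a semicontent algebra is an Ohm-Rush, faithfully flat algebra, so only the radical identity remains. One containment, $\sqrt{\orc(fg)}\subseteq\sqrt{\orc(f)\orc(g)}$, is immediate from the inclusion $\orc(fg)\subseteq\orc(f)\orc(g)$ recalled in Proposition~\ref{pr:omnibus}. For the reverse containment I would argue prime by prime: it suffices to show that every $\p\in\Spec R$ with $\orc(fg)\subseteq\p$ also contains $\orc(f)\orc(g)$. Suppose not, say $\orc(f)\not\subseteq\p$, and set $W=R\setminus\p$; then $\orc(f)_W=R_W$, so the semicontent condition gives $\orc(g)_W=\orc(fg)_W$. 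But $\orc(fg)\subseteq\p$ forces $\orc(fg)_W\subseteq\p R_W$, hence $\orc(g)_W\subseteq\p R_W$, and therefore $\orc(g)\subseteq\p$ (any element of $\orc(g)$ lying outside $\p$ would be a unit in $R_W$). Thus $\orc(f)\orc(g)\subseteq\p$, and since the radical of an ideal is the intersection of the primes containing it, we conclude $\sqrt{\orc(f)\orc(g)}\subseteq\sqrt{\orc(fg)}$.

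I do not anticipate a genuine obstacle. The only two points needing a moment's care are the commutation of powers of ideals with localization, used in the second step, and, in the third step, the passage from $\orc(g)_W\subseteq\p R_W$ back down to $\orc(g)\subseteq\p$; both are routine, and everything else is bookkeeping with the definitions.
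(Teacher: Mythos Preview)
Your argument is correct in all three steps. Note, however, that the paper does not supply its own proof of this proposition: it simply records the result as \cite[Proposition 2.6]{nmeSh-OR} and moves on. So there is no in-paper argument to compare against; what you have written is a clean, self-contained verification of the cited result, relying only on the definitions and on Proposition~\ref{pr:omnibus}(\ref{it:Rumult}) for the inclusion $\orc(fg)\subseteq\orc(f)\orc(g)$.
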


The above result implies that in a semicontent algebra $S$ over $R$, prime ideals of $R$ extend to prime ideals of $S$.  In fact, more can be said.
The proof of the following is given in \cite[Proposition 2.7]{nmeSh-OR}, though the proposition itself is misstated there.

\begin{prop}\label{pr:primaries}
Let $R$ be a ring, and let $S$ be a faithfully flat Ohm-Rush algebra.  If $ S$ is a semicontent $R$-algebra, then $QS$ is a primary ideal of $S$ whenever  $Q$ is a primary ideal of $R$.  Conversely if $R$ is Noetherian and if $Q S$ is a prime $($resp. primary$)$ ideal of $S$ whenever $Q$ is a prime $($resp. primary$)$ ideal of $R$, then $S$ is a semicontent $R$-algebra.
\end{prop}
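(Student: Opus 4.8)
The plan is to handle the two implications separately, in each case translating between ideals of $S$ and the content function via the two basic facts available for a faithfully flat Ohm-Rush algebra: for an ideal $I$ of $R$ and an element $h\in S$, one has $h\in IS$ if and only if $\orc(h)\subseteq I$ (the ``if'' because $h\in\orc(h)S$, the ``only if'' by definition of $\orc$ as an intersection), and $IS\cap R=I$ (purity of faithfully flat maps). In particular ``$QS$ is proper'' will always follow from $QS\cap R=Q\subsetneq R$.

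For the forward implication, assume $S$ is semicontent and let $Q$ be a $\p$-primary ideal of $R$; set $W=R\setminus\p$. To check the primary condition for $QS$, take $f,g\in S$ with $fg\in QS$ and $g\notin\sqrt{QS}$. Since $\sqrt Q=\p$, one has (pointwise) $\p S\subseteq\sqrt{QS}$, so $g\notin\p S$, i.e. $\orc(g)\not\subseteq\p$, i.e. $\orc(g)_W=R_W$. Applying the defining property of a semicontent algebra to the pair $(g,f)$ gives $\orc(fg)_W=\orc(f)_W$; on the other hand $fg\in QS$ forces $\orc(fg)\subseteq Q$, so $\orc(f)_W\subseteq Q_W$ and hence $\orc(f)\subseteq Q_W\cap R=Q$, the last equality because $\p$-primary ideals are contracted from $R_\p$. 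Therefore $f\in\orc(f)S\subseteq QS$. Note that this half uses no Noetherian hypothesis.

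For the converse, assume $R$ is Noetherian and that extension of scalars to $S$ carries primes to primes and primaries to primaries. Since $S$ is already a faithfully flat Ohm-Rush algebra, it remains to verify the semicontent condition: given a multiplicative set $W\subseteq R$ and $f,g\in S$ with $\orc(f)_W=R_W$, show $\orc(fg)_W=\orc(g)_W$. The inclusion $\orc(fg)_W\subseteq\orc(g)_W$ follows from $\orc(fg)\subseteq\orc(f)\orc(g)$ (Proposition~\ref{pr:omnibus}). For the reverse, fix a primary decomposition $\orc(fg)=Q_1\cap\dots\cap Q_n$ in the Noetherian ring $R$, with $Q_i$ being $\p_i$-primary. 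Since $\sqrt{Q_i}=\p_i$, a component with $\p_i\cap W\neq\emptyset$ has $(Q_i)_W=R_W$, so $\orc(fg)_W=\bigcap_{\p_i\cap W=\emptyset}(Q_i)_W$. I claim $\orc(g)\subseteq Q_i$ whenever $\p_i\cap W=\emptyset$. Choose $m$ with $\p_i^m\subseteq Q_i$. By hypothesis $Q_iS$ is primary and $\p_iS$ is prime — and proper, since otherwise $(\p_iS)^m=\p_i^mS=S$ would give $Q_iS=S$ — so $\p_iS$ is radical; combined with $Q_i\subseteq\p_i$ and $(\p_iS)^m=\p_i^mS\subseteq Q_iS$ this yields $\sqrt{Q_iS}=\p_iS$. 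Now $fg\in\orc(fg)S\subseteq Q_iS$, while $f\notin\p_iS=\sqrt{Q_iS}$, because $\orc(f)$ meets $W$ and hence is not contained in $\p_i$. Since $Q_iS$ is primary, $g\in Q_iS$, so $\orc(g)\subseteq Q_iS\cap R=Q_i$. Intersecting over all such $i$ and localizing at $W$ gives $\orc(g)_W\subseteq\bigcap_{\p_i\cap W=\emptyset}(Q_i)_W=\orc(fg)_W$.

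The substantive work is in the converse, and its crux is the reduction to a single primary component $Q_i$ of $\orc(fg)$ together with the identity $\sqrt{Q_iS}=\p_iS$: this is where all three hypotheses (primes extend to primes, primaries extend to primaries, $R$ Noetherian) are used at once, and it is also why one must work with a genuine finite primary decomposition rather than testing containments prime by prime, which would not detect embedded components. The remaining steps are routine manipulations with the Ohm-Rush dictionary recorded in the first paragraph.
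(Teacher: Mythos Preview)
Your proof is correct. The paper itself does not supply a proof here but refers to \cite[Proposition 2.7]{nmeSh-OR}; your argument is the natural one and almost certainly coincides with what appears there: the forward direction translates the primary condition on $QS$ into a content-function statement at $W=R\setminus\p$ via the Ohm-Rush dictionary, and the converse uses a finite primary decomposition of $\orc(fg)$ together with the hypothesis that $Q_iS$ is $\p_iS$-primary to force $\orc(g)\subseteq Q_i$ for the components surviving localization at $W$.
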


\section{Factor rings and local-global properties}\label{sec:localglobal}

In this section, we examine the extent to which semicontent, content, weak content, and Gaussian are local properties, and the extent to which these properties pass by base change to factor rings.

We say that the content function $\orc$ for the ring extension $R\subset S$ is {\it unital} if for all $f, g\in S$, $\orc(fg) =\orc(g)$, whenever $\orc(f) =R$.   Thus a faithfully flat ring extension $R\subset S$ is semicontent if for any multiplicatively closed subset $W$ of $R$, we have that $R_W \subset S_W$ has a unital content function.   In what follows, given $R\subset S$ and $\p \in \Spec(R)$, we let $\orc_\p$ denote the content function for the extension $R_\p\subset S_{R\setminus\p}$.   We also note that if $S$ is faithfully flat and Ohm-Rush over $R$, then for any $g\in S$, $\orc_\p(g/1) = \orc(g)R_\p$ by  Proposition~\ref{pr:omnibus}(\ref{it:OR31}).

\begin{prop} \label{localsemicontent}
Let $R\subset S$ be a faithfully flat Ohm-Rush extension.   Then the following are equivalent.
\be
\item $S$ is a semicontent $R$-algebra.
\item For every $\m\in\Max (R)$, $R_\m \subseteq S_{R\setminus \m}$ is semicontent.
\item For every $\p \in \Spec(R)$, $R_\p \subseteq S_{R\setminus \p}$ has a unital content function.
\ee
\end{prop}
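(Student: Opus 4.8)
The plan is to reduce everything to two facts. First, as recalled just before the statement, a faithfully flat Ohm-Rush extension $R\subseteq S$ is semicontent exactly when $R_W\subseteq S_W$ has a unital content function for every multiplicatively closed $W\subseteq R$. Second, localizing a faithfully flat Ohm-Rush extension at a multiplicatively closed subset of the base again yields a faithfully flat Ohm-Rush extension whose content function is compatible with the old one via Proposition~\ref{pr:omnibus}(\ref{it:OR31}), namely $\orc_W(h/w)=\orc(h)_W$; moreover every localization of $R_W$ is a localization $R_{W'}$ of $R$ at a multiplicatively closed $W'\supseteq W$, compatibly with the extensions and their content functions. In particular, iterated localizations of $R\subseteq S$ are again of the form $R_{W'}\subseteq S_{W'}$.

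Granting this, $(1)\Rightarrow(2)$ is immediate: for $\m\in\Max(R)$ the extension $R_\m\subseteq S_{R\setminus\m}$ is faithfully flat and Ohm-Rush, and any localization of it at a multiplicatively closed subset of $R_\m$ has the form $R_{W'}\subseteq S_{W'}$, which has a unital content function by $(1)$; hence $R_\m\subseteq S_{R\setminus\m}$ is semicontent. For $(2)\Rightarrow(3)$, given $\p\in\Spec(R)$ choose $\m\in\Max(R)$ with $\p\subseteq\m$; then $R_\p\subseteq S_{R\setminus\p}$ is the localization of the semicontent extension $R_\m\subseteq S_{R\setminus\m}$ at the image of $R\setminus\p$, so it has a unital content function.

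The substance is $(3)\Rightarrow(1)$. Let $W\subseteq R$ be multiplicatively closed and $f,g\in S$ with $\orc(f)_W=R_W$; I must show $\orc(fg)_W=\orc(g)_W$. The inclusion $\orc(fg)_W\subseteq\orc(g)_W$ follows from $\orc(fg)\subseteq\orc(f)\orc(g)\subseteq\orc(g)$ by Proposition~\ref{pr:omnibus}(\ref{it:Rumult}), so it suffices to test equality at the primes of $R_W$. Such a prime is $\p R_W$ for $\p\in\Spec(R)$ with $\p\cap W=\emptyset$, and $(R_W)_{\p R_W}=R_\p$, so it is enough to prove $\orc(fg)R_\p=\orc(g)R_\p$ for every such $\p$. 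Fixing $\p$, localizing $\orc(f)_W=R_W$ once more gives $\orc(f)R_\p=R_\p$, i.e.\ $\orc_\p(f/1)=R_\p$ by Proposition~\ref{pr:omnibus}(\ref{it:OR31}). Applying the unital content function of $R_\p\subseteq S_{R\setminus\p}$ from $(3)$ to $f/1$ and $g/1$ yields $\orc_\p((fg)/1)=\orc_\p(g/1)$, that is, $\orc(fg)R_\p=\orc(g)R_\p$, as needed.

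The only delicate point is the preliminary bookkeeping: identifying the multiplicatively closed subsets and primes of $R_W$ with those of $R$, and confirming that the several content functions in play are restrictions of one another. Once that is in place the three implications are essentially formal, with $(3)\Rightarrow(1)$ the one step that uses something beyond the definitions, namely that equality of the ideals $\orc(fg)$ and $\orc(g)$ after inverting $W$ can be checked prime by prime on $R_W$.
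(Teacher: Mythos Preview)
Your proof is correct and follows essentially the same route as the paper's. The only cosmetic difference is in $(3)\Rightarrow(1)$: the paper first observes that condition $(3)$ passes to $R_W$ and then reduces to the case $W=R$, checking the equality $\orc(fg)=\orc(g)$ at maximal ideals, whereas you work directly with the localization at $W$ and check equality of $\orc(fg)_W$ and $\orc(g)_W$ prime by prime in $R_W$; both arguments invoke the same compatibility $\orc_\p(h/1)=\orc(h)R_\p$ from Proposition~\ref{pr:omnibus}(\ref{it:OR31}) and the same local--global principle for ideals.
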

\begin{proof}
(1) $\Rightarrow $ (2) is immediate from the definition.   (2) $\Rightarrow$ (3) also follows directly from the definition and the fact that localization at the prime  $\p$ is canonically  the same as first localization at a maximal ideal that contains $\p$ and then localizing that ring at the image of $\p$.

  To show (3) $\Rightarrow$ (1), let $W$ be an arbitrary multiplicatively closed subset of $R$.  Clearly condition (3) holds for all primes in $\Spec(R_W)$.   Hence, replacing $R$ with $R_W$ if need be, it suffices to show that $R\subset S$ has a unital content function.  Let $f,g \in S$ with $\orc(f) = R$.   We already know that $\orc(g) \supseteq \orc(fg)$ by definition.  On the other hand, since as noted above, the content function localizes, we have $\orc_\m(f/1) = R_\m$ for all $\m\in \Max(R)$.  Therefore by assumption $\orc(fg)_\m = \orc_\m(fg/1) = \orc_m(g/1) = \orc(g)_\m$ for all $\m\in \Max(R)$.    Hence, since the ideals are locally equal, we have $\orc(g) =\orc(fg)$.
\end{proof}

We also have analogues of the above for the weak content and Gaussian properties.
\begin{prop}\label{localweak}
Let $R \subseteq S$ be a flat Ohm-Rush extension.  The following are equivalent:
\be
\item $S$ is a weak content $R$-algebra.
\item $S_W$ is a weak content $R_W$-algebra for all multiplicative sets $W\subseteq R$.
\item $S_{R\setminus \m}$ is a weak content $R_\m$-algebra for all maximal ideals $\m$ of $R$.
\ee
\end{prop}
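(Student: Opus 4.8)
The plan is to imitate the proof of Proposition~\ref{localsemicontent}, i.e. to ``localize everything,'' using that the content function localizes (Proposition~\ref{pr:omnibus}(\ref{it:OR31})) together with two standard facts: equality of ideals of $R$ can be checked locally at maximal ideals, and radicals commute with localization, $\sqrt{I}\,R_W = \sqrt{IR_W}$ for every ideal $I$ and every multiplicatively closed $W \subseteq R$. I would first record that, since $S$ is flat and Ohm-Rush over $R$, each $S_W$ is flat and Ohm-Rush over $R_W$ by Proposition~\ref{pr:omnibus}(\ref{it:OR31}), so the weak content condition is meaningful for the localized extensions.

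For (1) $\Rightarrow$ (2) I would fix a multiplicative set $W \subseteq R$ and $f/w, g/w' \in S_W$, write $(f/w)(g/w') = fg/(ww')$, and use Proposition~\ref{pr:omnibus}(\ref{it:OR31}) to get $\orc_W\!\big((f/w)(g/w')\big) = \orc(fg)_W$ and $\orc_W(f/w)\,\orc_W(g/w') = \big(\orc(f)\orc(g)\big)_W$. Passing to radicals and commuting them past the localization, the desired identity is precisely the localization of $\sqrt{\orc(fg)} = \sqrt{\orc(f)\orc(g)}$, which holds by (1). The implication (2) $\Rightarrow$ (3) is the special case $W = R \setminus \m$.

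For (3) $\Rightarrow$ (1) I would take $f, g \in S$ and verify $\sqrt{\orc(fg)} = \sqrt{\orc(f)\orc(g)}$ after localizing at an arbitrary maximal ideal $\m$, which suffices since ideal equality is local. Because radicals commute with localization and content localizes, the localized left-hand side equals $\sqrt{\orc_\m(fg/1)} = \sqrt{\orc_\m\!\big((f/1)(g/1)\big)}$ and the localized right-hand side equals $\sqrt{\orc_\m(f/1)\,\orc_\m(g/1)}$; these agree exactly because $R_\m \subseteq S_{R\setminus\m}$ is a weak content extension by hypothesis~(3). As the two ideals agree at every maximal ideal, they are equal, completing the cycle.

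The proof is essentially bookkeeping, and I do not expect a serious obstacle; the one place requiring a choice is in (3) $\Rightarrow$ (1). One is tempted to argue instead through the prime-extension criterion Proposition~\ref{pr:omnibus}(\ref{it:Ruprimes}), but since $S$ is only assumed flat (not faithfully flat) over $R$, the primeness of $\p S$ is not obviously detected by any single localization $S_{R\setminus\m}$, and assembling the local data directly would be fiddly. Working with the radical-of-content formulation avoids this, so the only care required is to note explicitly that radicals commute with arbitrary localization --- which is already used, not merely at maximal ideals, in the step (1) $\Rightarrow$ (2).
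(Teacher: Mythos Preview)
Your argument is correct, but it proceeds along a different axis than the paper's proof. You work directly with the defining identity $\sqrt{\orc(fg)} = \sqrt{\orc(f)\orc(g)}$ and push it through localization using that both content and radical commute with localization; this reduces everything to routine bookkeeping. The paper instead uses the prime-extension criterion of Proposition~\ref{pr:omnibus}(\ref{it:Ruprimes}) throughout: for (1)$\Rightarrow$(2) one simply notes that a prime of $R_W$ is $\p R_W$ for some $\p\in\Spec R$ avoiding $W$, and primeness (or triviality) of $\p S$ passes to $\p S_W$; for (3)$\Rightarrow$(1), given $fg\in\p S$ one picks $\m\supseteq\p$, uses that $\p S_{R\setminus\m}$ is prime to get (say) $f/1\in\p S_{R\setminus\m}$, and then --- this is the step you were wary of --- clears the denominator and invokes flatness via $\orc(af)=a\,\orc(f)$ (Proposition~\ref{pr:omnibus}(\ref{it:ORflat})) together with $a\notin\p$ to conclude $\orc(f)\subseteq\p$, hence $f\in\p S$.

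So your hesitation about the prime criterion was unnecessary: faithful flatness is not needed, and the ``fiddly'' assembly of local data is handled cleanly by the single flatness identity $\orc(af)=a\,\orc(f)$. That said, your route is arguably more transparent, since it avoids this trick entirely and makes the local nature of the weak content condition manifest from the definition itself. The paper's route, on the other hand, exhibits directly that ``$\p S$ is prime or the whole ring'' is itself a local property, which is a useful statement in its own right.
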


\begin{proof}
(1) $\implies$ (2): Let $P \in \Spec R_W$.  Then $P = \p R_W$ for some $\p \in \Spec R$ with $\p \cap W =\emptyset$.  Then by assumption, either $\p S = S$ or $\p S\in \Spec S$.  Hence we have that $P S_W = \p S_W = (\p S) S_W$ either equals $S_W$ or is an element of $\Spec(S_W)$.

(2) $\implies$ (3) is automatic.

(3) $\implies$ (1): Let $\p \in \Spec R$.  Let $f, g \in S$ such that $fg \in \p S$.  Choose a maximal ideal $\m$ of $R$ such that $\p \subseteq \m$.  Then $(f/1)(g/1) \in \p S_{R\setminus \m} \in \Spec (S_{R\setminus \m})$ (or $\p S_{R\setminus \m} = S_{R\setminus \m}$), so without loss of generality $f/1 \in \p S_{R\setminus \m}$.  Thus, there is some $a\in R\setminus \m$ with $af \in \p S$.  But then by flatness, $ac(f) = c(af) \subseteq \p$, and since $a\notin \p$ and $\p$ is prime, it follows that $c(f) \subseteq \p$, whence $f\in \p S$.  Thus, either $\p S$ is prime or $\p S = S$.
\end{proof}

\begin{prop}\label{localGauss}
Let $R \rightarrow S$ be a faithfully flat Ohm-Rush extension. The following are equivalent:
\be
\item $S$ is a Gaussian $R$-algebra.
\item $S_W$ is a Gaussian $R_W$-algebra for all multiplicative sets $W \subseteq R$.
\item $S_{R\setminus \m}$ is a Gaussian $R_\m$-algebra for all maximal ideals $\m$ of $R$.
\ee
\end{prop}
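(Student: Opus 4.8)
The plan is to follow the pattern of the proofs of Propositions~\ref{localsemicontent} and~\ref{localweak}, since the Gaussian identity is assembled from the same content-function data, and that data behaves well under localization by Proposition~\ref{pr:omnibus}(\ref{it:OR31}).

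For (1) $\Rightarrow$ (2), I would fix a multiplicative set $W \subseteq R$ and first record the ambient facts: $S_W \cong S \otimes_R R_W$ is faithfully flat over $R_W$ (faithful flatness is stable under base change), and $S_W$ is an Ohm-Rush $R_W$-module with $\orc_W(g/w) = \orc(g)_W$ for all $g \in S$, $w \in W$, by Proposition~\ref{pr:omnibus}(\ref{it:OR31}). Every element of $S_W$ is a single fraction $g/w$, and $(g_1/w_1)(g_2/w_2) = (g_1g_2)/(w_1w_2)$, so verifying the Gaussian identity on $S_W$ amounts to checking $\orc_W\bigl((g_1g_2)/(w_1w_2)\bigr) = \orc_W(g_1/w_1)\,\orc_W(g_2/w_2)$ for $g_1, g_2 \in S$. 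By the displayed localization formula this reads $\orc(g_1g_2)_W = \orc(g_1)_W\,\orc(g_2)_W$, which follows from $\orc(g_1g_2) = \orc(g_1)\orc(g_2)$ in $R$ together with the fact that localization commutes with products of ideals. The implication (2) $\Rightarrow$ (3) is immediate upon taking $W = R \setminus \m$.

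For (3) $\Rightarrow$ (1), let $f, g \in S$. The inclusion $\orc(fg) \subseteq \orc(f)\orc(g)$ holds by Proposition~\ref{pr:omnibus}(\ref{it:Rumult}), so it suffices to prove equality, and it is enough to check this after localizing at each $\m \in \Max(R)$. As recalled at the start of this section, $\orc(h)_\m = \orc_\m(h/1)$ for every $h \in S$; applying hypothesis (3) to $f/1, g/1 \in S_{R\setminus\m}$ then gives
\[
\orc(fg)_\m = \orc_\m(fg/1) = \orc_\m(f/1)\,\orc_\m(g/1) = \orc(f)_\m\,\orc(g)_\m = \bigl(\orc(f)\orc(g)\bigr)_\m .
\]
Since two ideals of $R$ that agree at every maximal ideal coincide, $\orc(fg) = \orc(f)\orc(g)$; as $S$ is faithfully flat and Ohm-Rush over $R$ by hypothesis, this is exactly the assertion that $S$ is Gaussian over $R$.

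I do not expect a genuine obstacle here. The only points needing a little care are the observation in (1) $\Rightarrow$ (2) that an arbitrary element of $S_W$ is a single fraction (so products of arbitrary elements remain single fractions and sums never intervene), and the repeated invocation of Proposition~\ref{pr:omnibus}(\ref{it:OR31}) to commute $\orc$ past localization in both directions of the argument.
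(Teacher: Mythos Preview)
Your proof is correct and follows essentially the same approach as the paper: both directions use Proposition~\ref{pr:omnibus}(\ref{it:OR31}) to commute the content function with localization, the inclusion $\orc(fg)\subseteq\orc(f)\orc(g)$ from Proposition~\ref{pr:omnibus}(\ref{it:Rumult}), and the local-global principle for equality of ideals. Your version simply spells out in more detail what the paper compresses into one-line citations.
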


\begin{proof}
(1) $\implies$ (2) by Proposition~\ref{pr:omnibus}(\ref{it:OR31}).

(2) $\implies$ (3) is automatic.

(3) $\implies$ (1): Let $f, g \in S$.  Then by Proposition~\ref{pr:omnibus}, $\orc(fg) \subseteq \orc(f)\orc(g)$ and $\orc(fg)_\m = (\orc(f)\orc(g))_\m$ for all maximal ideals $\m$ of $R$.  Hence $\orc(fg) = \orc(f)\orc(g)$.
\end{proof}

We do not know whether a result as general as Propositions~\ref{localsemicontent}--\ref{localGauss} is possible for the property of being a content algebra, with a general base ring.   
However, we do have the following.

\begin{prop}\label{localcontent}
Let $R\subset S$ be a faithfully flat Ohm-Rush extension where $R$ is semilocal.  If $S_{R\setminus\m}$ is a content $R_\m$-algebra for each $\m \in \Max(R)$, then $S$ is a content $R$-algebra.
\end{prop}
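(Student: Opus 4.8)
The plan is to exploit that $\Max(R)$ is finite in order to turn the per-localization exponents produced by the content-algebra hypothesis into a single global exponent. Write $\Max(R) = \{\m_1, \dotsc, \m_k\}$. Since $R \subset S$ is faithfully flat (so $S$ is a flat Ohm-Rush $R$-module), Proposition~\ref{pr:omnibus}(\ref{it:OR31}) gives that $S_{R\setminus\m_i}$ is Ohm-Rush over $R_{\m_i}$ and that the content function localizes, namely $\orc_{\m_i}(h/1) = \orc(h)R_{\m_i}$ for every $h \in S$ and every $i$. Fixing $f, g \in S$, it then suffices to exhibit a single $N \in \N$ with $\orc(f)^{N}\orc(g) = \orc(f)^{N-1}\orc(fg)$, since faithful flatness and the Ohm-Rush property are already in hand.

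For each $i$, applying the content-algebra property of $R_{\m_i} \subset S_{R\setminus\m_i}$ to the elements $f/1,\, g/1$ (and noting $(f/1)(g/1) = (fg)/1$) produces $n_i \in \N$ with
\[
\bigl(\orc(f)^{n_i}\orc(g)\bigr)R_{\m_i} = \bigl(\orc(f)^{n_i-1}\orc(fg)\bigr)R_{\m_i}.
\]
The elementary but crucial point is that such an equality persists under raising the exponent: multiplying both sides by $\orc(f)R_{\m_i}$ shows that if it holds at $n$ then it holds at $n+1$. Therefore, setting $N := \max\{n_1, \dotsc, n_k\}$, the displayed equality holds at exponent $N$ simultaneously for all $i$.

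To conclude, Proposition~\ref{pr:omnibus}(\ref{it:Rumult}) gives the inclusion $\orc(f)^{N-1}\orc(fg) \subseteq \orc(f)^{N-1}\orc(f)\orc(g) = \orc(f)^{N}\orc(g)$, so the cokernel of this inclusion of $R$-modules becomes zero after localizing at each $\m_i$ and hence is itself zero. Thus $\orc(f)^{N}\orc(g) = \orc(f)^{N-1}\orc(fg)$ in $R$; as $f, g$ were arbitrary, $S$ is a content $R$-algebra.

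I expect no genuine technical obstacle here: the only essential use of the hypotheses is the passage from the finitely many local exponents $n_i$ to the single exponent $N$, which relies both on $\Max(R)$ being finite and on the monotonicity observation above, while the remaining steps are formal. This is presumably exactly why the statement is restricted to a semilocal base, since for a general base ring there is no a priori bound on the exponents needed at the various maximal ideals.
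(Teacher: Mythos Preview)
Your proof is correct and follows essentially the same path as the paper's: obtain local Dedekind--Mertens exponents $n_i$ at each maximal ideal, pass to $N=\max\{n_i\}$ using the monotonicity of the relation, and then use a global containment together with the local-global principle for ideals. If anything, your justification of the containment $\orc(f)^{N-1}\orc(fg)\subseteq\orc(f)^N\orc(g)$ via Proposition~\ref{pr:omnibus}(\ref{it:Rumult}) is more careful than the paper's, which invokes only $\orc(fg)\subseteq\orc(f)$ ``by definition'' (the latter alone gives containment in $\orc(f)^N$, not in $\orc(f)^N\orc(g)$).
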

\begin{proof}
Let $f,g \in S$ and let $\m_1,\m_2,\ldots,\m_t$ be the maximal ideals of $R$.  By assumption, for $i=1,\ldots, t $ there exists $n_i$ such that $\orc_{\m_i}(f)^{n_i}\orc_{\m_i}(g) = \orc_{\m_i}(f)^{n_i-1}\orc_{\m_i}(fg)$.  Thus if $n=\max\{n_i\}$, it follows that $\orc_{\m_i}(f)^{n}\orc_{\m_i}(g) = \orc_{\m_i}(f)^{n-1}\orc_{\m_i}(fg)$ for each $i$.  Finally observe that since $\orc(f)\supseteq \orc(fg)$ by definition, $\orc(f)^{n}\orc(g) \supseteq \orc(f)^{n-1}\orc(fg)$.  Moreover, the two ideals are locally equal, whence  the ideals are equal. This is exactly what was needed.
\end{proof}

Next we discuss what we can conclude about the ring map $R/I \rightarrow S/IS$ when $I$ is an ideal of $R$ and $R \rightarrow S$ has one of the properties we care about.  In fact, all such properties will pass to factor rings nicely.  We begin with a statement that first appears, without proof, as \cite[Remark 2.3(d)]{OhmRu-content}.  Given that we use the fact here and also crucially throughout \S3 of \cite{nmeSh-Gauss}, we provide a proof below: \begin{lemma}\label{lem:factOR}
Let $R$ be a commutative ring, let $I$ be an ideal of $R$, let $M$ be an Ohm-Rush $R$-module.  Then $M/IM$ is an Ohm-Rush $(R/I)$-module, and for any $x\in M$, we have $\orc_{R/I}(x+IM) = \orc_R(x)+I/I$.
\end{lemma}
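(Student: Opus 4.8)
The plan is to argue directly from the definition of content as an intersection of ideals, transporting everything through the standard lattice isomorphism between the ideals of $R/I$ and the ideals of $R$ that contain $I$. First I would record the elementary observation that for an ideal $J$ with $I\subseteq J$ one has $IM\subseteq JM$, hence $(J/I)(M/IM)=JM/IM$, and therefore $x+IM\in(J/I)(M/IM)$ if and only if $x\in JM$. Since the correspondence $J\mapsto J/I$ also commutes with arbitrary intersections of ideals containing $I$ (preimages under the quotient map commute with intersections, and that map is surjective), this immediately gives
\[
\orc_{R/I}(x+IM)=\bigcap\{\,J/I\mid I\subseteq J,\ x\in JM\,\}=\Bigl(\bigcap\{\,J\mid I\subseteq J,\ x\in JM\,\}\Bigr)\!\Big/I .
\]

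The content formula then reduces to the identity $\bigcap\{\,J\mid I\subseteq J,\ x\in JM\,\}=\orc_R(x)+I$. One inclusion is immediate: any $J$ occurring in the intersection satisfies $x\in JM$, so $\orc_R(x)\subseteq J$ by the definition of $\orc_R$, and also $I\subseteq J$, whence $\orc_R(x)+I\subseteq J$; intersecting over all such $J$ yields $\orc_R(x)+I\subseteq\bigcap\{J\}$. For the reverse inclusion I would invoke the hypothesis that $M$ is Ohm-Rush: since $x\in\orc_R(x)M\subseteq(\orc_R(x)+I)M$, the ideal $\orc_R(x)+I$ itself satisfies the two defining conditions (it contains $I$, and $x$ lies in the corresponding submodule), so it is one of the ideals being intersected and hence contains the intersection. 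This proves the identity, and substituting it into the display gives $\orc_{R/I}(x+IM)=(\orc_R(x)+I)/I$, as claimed.

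Finally, to see that $M/IM$ is an Ohm-Rush $(R/I)$-module I would just verify $x+IM\in\orc_{R/I}(x+IM)(M/IM)$: by the formula just established together with the first observation, this submodule equals $\bigl((\orc_R(x)+I)/I\bigr)(M/IM)=(\orc_R(x)M+IM)/IM$, which visibly contains $x+IM$ because $x\in\orc_R(x)M$.

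There is no real obstacle here; the only step that goes beyond ideal-lattice bookkeeping — and the only place the hypothesis is genuinely used — is showing that $\orc_R(x)+I$ is actually attained as one of the ideals $J$ in the intersection, which is exactly the defining property $x\in\orc_R(x)M$ of an Ohm-Rush module. (One could equally run the argument inside $M$ via Proposition~\ref{pr:omnibus}(\ref{it:OR12ii}), intersecting the submodules $JM$, but the direct approach above seems cleanest.)
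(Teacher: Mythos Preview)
Your proof is correct and follows essentially the same approach as the paper: both arguments identify $\orc_R(x)+I$ (respectively its image in $R/I$) as the minimum of the relevant family of ideals by checking (i) it belongs to the family because $x\in\orc_R(x)M$, and (ii) it is contained in every member because any such $J$ contains both $\orc_R(x)$ and $I$. The only cosmetic difference is that you phrase the result as computing an intersection via the ideal correspondence, whereas the paper phrases it as exhibiting a unique minimum element.
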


\begin{proof}
Let $x\in M$.  It will suffice to show that $\orc_R(x)+I/I$ is the unique minimum element with respect to inclusion among ideals $K$ of $R/I$ such that $x+IM \in K\cdot(M/IM)$.  First note that $\orc_R(x)+I/I$ falls into this class, since we have $x \in \orc_R(x)M$ by assumption, so $x+IM \in (\orc_R(x)M+IM)/IM = ((\orc_R(x)+I)/I)(M/IM)$.  Next let $K$ be an ideal of $R/I$ with $x+IM\in K \cdot (M/IM)$.  There is a unique ideal $J$ of $R$ such that $J$ contains $I$ and $K=J/I$.  By assumption we have $x \in JM+IM = JM$, so $\orc_R(x) \subseteq J$, whence $\orc_R(x)+I/I \subseteq J+I/I = J/I = K$.
\end{proof}

\begin{prop}\label{pr:factorrings}
Let $R$ be a commutative ring, let $I$ be an ideal of $R$, and let $S$ be an $R$-algebra.
\begin{enumerate}
\item\label{it:factwc} if $S$ is a weak content $R$-algebra, then $S/IS$ is a weak content $(R/I)$-algebra,
\item\label{it:factsc} if $S$ is a semicontent $R$-algebra, then $S/IS$ is a semicontent $(R/I)$-algebra,
\item\label{it:factcontent} if $S$ is a content $R$-algebra, then $S/IS$ is a content $(R/I)$-algebra, and
\item\label{it:factGauss} if $S$ is a Gaussian $R$-algebra, then $S/IS$ is a Gaussian $(R/I)$-algebra.
\end{enumerate}
\end{prop}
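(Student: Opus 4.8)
The plan is to derive all four parts from Lemma~\ref{lem:factOR}. Write $\pi\colon R\to R/I$ and $\pi_S\colon S\to S/IS$ for the quotient maps. By Lemma~\ref{lem:factOR}, $S/IS$ is an Ohm-Rush $(R/I)$-algebra and $\orc_{R/I}(\pi_S(f))=\pi(\orc_R(f))$ for every $f\in S$. In parts (2)--(4) the hypothesis includes faithful flatness of $S$ over $R$, so $S/IS\cong S\otimes_R R/I$ is faithfully flat over $R/I$, and the map $R/I\to S/IS$ is injective since faithful flatness forces $IS\cap R=I$. Since $\pi$ is surjective it preserves finite sums, products, and powers of ideals: $\pi(JK)=\pi(J)\pi(K)$ and $\pi(J^n)=\pi(J)^n$. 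Parts (3) and (4) are then immediate: given $f,g\in S$, pick $n\in\N$ with $\orc_R(f)^n\orc_R(g)=\orc_R(f)^{n-1}\orc_R(fg)$ (respectively use $\orc_R(fg)=\orc_R(f)\orc_R(g)$) and apply $\pi$ to obtain the same relation for the content function of $S/IS$ over $R/I$; together with faithful flatness this shows $S/IS$ is a content (respectively Gaussian) $(R/I)$-algebra.

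For part (1) I would use the spectral criterion of Proposition~\ref{pr:omnibus}(\ref{it:Ruprimes}): an Ohm-Rush algebra $R\to S$ is weak content exactly when, for each $\p\in\Spec R$, either $\p S=S$ or $\p S\in\Spec S$. A prime of $R/I$ has the form $\overline\p=\p/I$ with $I\subseteq\p\in\Spec R$, and then $\overline\p\,(S/IS)=\pi_S(\p S)$; since $IS\subseteq\p S$ we get $\pi_S^{-1}(\pi_S(\p S))=\p S$, hence $(S/IS)/\overline\p(S/IS)\cong S/\p S$. As $S$ is weak content over $R$, $\p S$ either equals $S$ or is prime, so $S/\p S$ is zero or a domain; therefore $\overline\p(S/IS)$ either equals $S/IS$ or is prime, and $S/IS$ is weak content over $R/I$.

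Part (2), the semicontent case, is where the only real idea is needed, and I expect it to be the main obstacle, since the definition of semicontent quantifies over all multiplicative sets and that meshes poorly with passing to $R/I$. To bypass this I would appeal to the local-global principle of Proposition~\ref{localsemicontent}: it suffices to check that for each $\overline\p\in\Spec(R/I)$ the extension $(R/I)_{\overline\p}\subseteq (S/IS)_{(R/I)\setminus\overline\p}$ has a unital content function. Identifying $\overline\p$ with $\p\supseteq I$, these localizations are $A/J\subseteq B/JB$ where $A=R_\p$, $B=S_{R\setminus\p}$, and $J=IR_\p$, and the crucial feature is that $A$ is local with maximal ideal $\m$ and $J\subseteq\m$. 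By Proposition~\ref{localsemicontent} applied to $S$ over $R$, the extension $A\subseteq B$ has a unital content function. Now suppose $f,g\in B$ with $\orc_{A/J}(\overline f)=A/J$; by Lemma~\ref{lem:factOR} this says $\orc_A(f)+J=A$, and because $J\subseteq\m$ and $A$ is local it forces $\orc_A(f)=A$. Unitality of the content function on $A\subseteq B$ then gives $\orc_A(fg)=\orc_A(g)$, and applying $\pi$ yields $\orc_{A/J}(\overline f\,\overline g)=\orc_{A/J}(\overline g)$, as desired. The point is precisely that, after reducing to a local base ring, the gap between ``$\orc_A(f)+J=A$'' and ``$\orc_A(f)=A$'' disappears, since $J$ lies in the maximal ideal.
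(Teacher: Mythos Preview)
Your proof is correct and matches the paper's approach essentially line for line: Lemma~\ref{lem:factOR} for the Ohm-Rush property and content formula, base change for faithful flatness, direct application of $\pi$ to the Dedekind--Mertens/Gaussian formulas in (3) and (4), and the same local reduction via Proposition~\ref{localsemicontent} with the key observation $J\subseteq\m$ in (2). The only cosmetic difference is in (1), where you use the spectral criterion of Proposition~\ref{pr:omnibus}(\ref{it:Ruprimes}) while the paper verifies the radical condition $\sqrt{\orc(fg)}=\sqrt{\orc(f)\orc(g)}$ directly; these are equivalent characterizations and the arguments are of the same length and difficulty.
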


\begin{proof}
In all cases, we have by Lemma~\ref{lem:factOR} that $S/IS$ is an Ohm-Rush $(R/I)$-algebra, with induced content function as in that lemma.

Proof of (\ref{it:factwc}): Let $f,g \in S$ and let $\p/I$ be a prime ideal of $R/I$ (i.e. $\p \in \Spec R$ with $I \subseteq \p$) that contains $\orc_{R/I}((f+IS)(g+IS)) = \orc_{R/I}(fg + IS) = \orc_R(fg) + I$.  Then $\orc_R(fg) \subseteq \p$, so since $S$ is a weak content $R$-algebra, $\p$ also contains $\orc_R(f)\orc_R(g)$.  Thus, $\p/I$ contains $\orc_R(f)\orc_R(g) + I / I = \orc_{R/I}(f+IS)\orc_{R/I}(g+IS)$.  Hence, $S/IS$ is a weak content $(R/I)$-algebra.

For the remaining cases, note that since $S$ is faithfully flat over $R$, we have that $S/IS$ is faithfully flat over $R/I$, since faithful flatness is preserved by arbitrary base change.

Proof of (\ref{it:factsc}): First we establish that when $(R,\m)$ is local and $I \subseteq \m$, the ring map $R/I \rightarrow S/IS$ has unital content function.  To see this, let $f,g\in S$ with $\orc_{R/I}(f+IS) = R/I$.  This means that $\orc_R(f)+I = R$.  But since $I \subseteq \m$ and $(R,\m)$ is local, it follows that $\orc_R(f)=R$.  Then since the map $R \rightarrow S$ has unital content function, we have $\orc_R(fg) = \orc_R(g)$.  Hence, $\orc_{R/I}((f+IS)(g+IS))= \orc_R(fg)+I/I = \orc_R(g)+I/I = \orc_{R/I}(g+IS)$.

For the general case of (\ref{it:factsc}), according to Proposition~\ref{localsemicontent}, we need only establish that for any prime ideal $P$ of $(R/I)$ (i.e. $P=\p/I$, $\p \in \Spec R$, $I \subseteq \p$), we have that the map $(R/I)_P \rightarrow (S/IS)_{(R/I)\setminus P}$ has unital content function.  But this is the same as saying that $R_\p / IR_\p \rightarrow S_\p / IS_\p$ has unital content function, which is true by what we have already shown above.

Proof of (\ref{it:factcontent}) \& (\ref{it:factGauss}):
The necessary content formulas to prove these statements follow from Lemma~\ref{lem:factOR}.
\end{proof}

\section{Noetherian rings, Artinian rings, and INC extensions}\label{sec:NoethArt}

In this section, we show that a faithfully flat weak content algebra over a Noetherian ring is a semicontent algebra.  We apply this result to algebras that arise as base change by fields, in the process showing how some subtle properties of transcendental field extensions affect the preservation of primeness of ideals.  A similar result is also shown for INC extensions of arbitrary rings.  We then specialize to the base ring being Artinian, over which any flat algebra where primes extend to primes is in fact a content algebra.

Before we move on to the results of this section, we first recall some material about \emph{strong Krull primes}.  By definition, $\p \in \Spec R$ is a \emph{strong Krull prime} of an $R$-module $L$, written $\p \in \sK_R(L)$, if for any finitely generated ideal $I$ with $I \subseteq \p$, there is some $z\in L$ with $I \subseteq \ann z \subseteq \p$. This notion serves for arbitrary commutative rings in an analogous role to the one served by associated primes for Noetherian rings.  For more information, the reader may consult \cite{IrRu-ass, nmeSh-sKflat}.  We recall the following facts:

\begin{prop}\label{pr:sKprimary}
Let $R$ be a ring, $L$ a finitely generated $R$-module, $R \rightarrow S$ a ring homomorphism, and $M$ an $R$-flat $S$-module.  \begin{enumerate}
\item\label{it:coprimary} \cite[Theorem 4]{Dut-sK} The module $L$ is coprimary (i.e. $0$ is primary in $L$) if and only if $\sK_R(L)$ is a singleton, in which case we have $\sK_R(L) = \{\sqrt{\ann L}\}$.
\item\label{it:sKeq} \cite[Theorems 4.6 and 4.7]{nmeSh-sKflat} If either $R$ is Noetherian or
the map $R \rightarrow S$ satisfies INC (i.e. its fibers are zero-dimensional),
 then \[
 \sK_S(L \otimes_RM) = \bigcup_{\p \in \sK_R(L)} \sK_S(M/\p M).
 \]
\end{enumerate}
\end{prop}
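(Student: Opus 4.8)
This proposition collects two results from the literature, so in the paper the proof should be little more than the two citations; here is how I would justify each, and where the real work lies.

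For part~(\ref{it:coprimary}) the plan is to argue directly from the two definitions. First I would set $\p := \sqrt{\ann_R L}$ and observe that, since $L$ is coprimary and finitely generated, $\p$ is prime and coincides with the set of zerodivisors on $L$ (a zerodivisor acts locally nilpotently on $L$, hence nilpotently because $L$ is finitely generated). To see $\p \in \sK_R(L)$: given a finitely generated $I \subseteq \p$, some power of $I$ annihilates $L$; choosing $N$ minimal with $I^N L = 0$ and any $z \in I^{N-1}L \setminus \{0\}$ gives $I \subseteq \ann_R z$, and every element of $\ann_R z$ is a zerodivisor on $L$, hence lies in $\p$. Conversely, for $\q \in \sK_R(L)$: applying the defining property of $\sK$ to the ideal $0 \subseteq \q$ produces $z$ with $\ann_R L \subseteq \ann_R z \subseteq \q$, so $\p \subseteq \q$; and applying it to $(a)$ for $a \in \q$ produces $z' \neq 0$ with $az' = 0$, so $a$ is a zerodivisor on $L$ and $a \in \p$. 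Hence $\q = \p$ and $\sK_R(L) = \{\sqrt{\ann_R L}\}$. The converse implication — that a singleton $\sK_R(L)$ forces $L$ coprimary — carries the real content, and I would cite \cite[Theorem~4]{Dut-sK} for it, and indeed for the whole of part~(\ref{it:coprimary}).

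Part~(\ref{it:sKeq}) is exactly \cite[Theorems 4.6 and 4.7]{nmeSh-sKflat}: one of those theorems establishes the displayed formula when $R$ is Noetherian, the other when $R \to S$ satisfies INC, so the only thing to check is that the disjunction in our hypothesis is precisely the union of theirs, and I would cite both. If instead one wanted to reprove it, the plan would be: (i) reduce to the case $L$ coprimary, using the behaviour of $\sK$ along short exact sequences together with part~(\ref{it:coprimary}) to keep track of which primes appear (via a finite filtration with quotients $R/\p_i$ in the Noetherian case, and via the decompositions available under the hypotheses of \cite{nmeSh-sKflat} in general); and (ii) for $L$ coprimary with $\sK_R(L) = \{\p\}$, show that $L \otimes_R M$ and $M/\p M$ have the same strong Krull primes over $S$, using $R$-flatness of $M$ to reduce the annihilator in $L \otimes_R M$ of a tensor $\sum_i \ell_i \otimes m_i$ to a computation inside the finitely generated submodule $\sum_i R\ell_i$ and thence inside $M/\p M$.

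The hard part of a from-scratch proof is step~(ii) in the INC, non-Noetherian case: there one has neither associated primes nor finite filtrations and must control annihilators in tensor products and in $M/\p M$ directly, and it is exactly here that zero-dimensionality of the fibres of $R \to S$ is needed to keep the passage between primes of $R$ and primes of $S$ under control. Since \cite{Dut-sK} and \cite{nmeSh-sKflat} carry all of this out, the present paper should need only the citations.
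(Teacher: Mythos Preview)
Your assessment is correct: the paper gives no proof of this proposition at all, simply stating it with the citations to \cite[Theorem 4]{Dut-sK} and \cite[Theorems 4.6 and 4.7]{nmeSh-sKflat} and then immediately moving on to use it. Your sketch of how one would actually prove the results is more than the paper provides, but your recognition that only the citations are needed matches the paper exactly.
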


Using the above tools, we conclude the following:

\begin{thm} \label{thm:primary}
Let $\phi: R \rightarrow S$ be a flat ring homomorphism, such that either $R$ is Noetherian or $\phi$ satisfies INC.  
Let $\p \in\Spec(R)$ such that $\p S$ is a prime ideal of $S$.   Then for any $\p$-primary ideal $I$ of $R$, $IS$ is a $\p S$-primary ideal of $S$.
\end{thm}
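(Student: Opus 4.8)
The plan is to reduce everything to the strong Krull prime machinery recalled in Proposition~\ref{pr:sKprimary}. Set $L := R/I$, a finitely generated (indeed cyclic) $R$-module. Since $I$ is a $\p$-primary ideal of $R$, the zero submodule of $L$ is primary, i.e.\ $L$ is coprimary, so Proposition~\ref{pr:sKprimary}(\ref{it:coprimary}) gives $\sK_R(L) = \{\sqrt{\ann L}\} = \{\sqrt I\} = \{\p\}$. This is the only place the $\p$-primary hypothesis is used.

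Next I would apply Proposition~\ref{pr:sKprimary}(\ref{it:sKeq}) to the $R$-flat $S$-module $M := S$ (which is $R$-flat precisely because $\phi$ is flat). The hypothesis that either $R$ is Noetherian or $\phi$ satisfies INC is exactly what that result requires, and it yields
\[
\sK_S(L \otimes_R S) = \bigcup_{\q \in \sK_R(L)} \sK_S(S/\q S) = \sK_S(S/\p S).
\]
Now $L \otimes_R S = (R/I) \otimes_R S \cong S/IS$, so the left-hand side is $\sK_S(S/IS)$. On the right-hand side, $S/\p S$ is a domain because $\p S$ is prime, hence it is coprimary, and another application of Proposition~\ref{pr:sKprimary}(\ref{it:coprimary}) gives $\sK_S(S/\p S) = \{\sqrt{\ann(S/\p S)}\} = \{\sqrt{\p S}\} = \{\p S\}$. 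Combining, $\sK_S(S/IS) = \{\p S\}$.

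Finally, since $\sK_S(S/IS)$ is a singleton, the converse direction of Proposition~\ref{pr:sKprimary}(\ref{it:coprimary}) says $S/IS$ is coprimary; that is, the zero ideal is primary in $S/IS$, so $IS$ is a primary ideal of $S$ (note $IS \subseteq \p S \neq S$, so $IS$ is proper). Moreover its radical is the unique element $\sqrt{\ann(S/IS)} = \p S$ of $\sK_S(S/IS)$. Hence $IS$ is $\p S$-primary, as claimed.

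There is no serious obstacle here once the strong Krull formalism is available: one only needs to check the hypotheses of Proposition~\ref{pr:sKprimary} (finite generation of the cyclic modules $R/I$, $S/IS$, $S/\p S$, and $R$-flatness of $S$), the harmless identification $(R/I) \otimes_R S \cong S/IS$, and the standard fact that a domain is coprimary with strong Krull set equal to its nilradical. The real content has been absorbed into Proposition~\ref{pr:sKprimary}(\ref{it:sKeq}), whose proof lives in \cite{nmeSh-sKflat}; the dichotomy ``Noetherian base or INC map'' in the statement is inherited directly from the hypotheses of that result.
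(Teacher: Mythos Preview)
Your proof is correct and follows essentially the same route as the paper's own argument: both compute $\sK_R(R/I)=\{\p\}$ via coprimarity, push this through Proposition~\ref{pr:sKprimary}(\ref{it:sKeq}) with $M=S$ to get $\sK_S(S/IS)=\sK_S(S/\p S)=\{\p S\}$, and then invoke the converse direction of Proposition~\ref{pr:sKprimary}(\ref{it:coprimary}) to conclude that $IS$ is $\p S$-primary. Your write-up is slightly more explicit about the auxiliary verifications (properness of $IS$, finite generation of the cyclic modules), but the structure and the key inputs are identical.
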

\begin{proof}
Since $I$ is a $\p$-primary ideal, we have $\p = \sqrt{I} = \sqrt{\ann (R/I)}$.  Thus by Proposition~\ref{pr:sKprimary}(\ref{it:coprimary}), we have $\sK_R(R/I) = \{\p\}$.  In Proposition~\ref{pr:sKprimary}(\ref{it:sKeq}), let $M := S$ and $L := R/I$, which yields
 \begin{align*}
\sK_S(S/IS) &= \sK_S(R/I \otimes_R S)  = \bigcup_{\q \in \sK_R(R/I)} \sK_S(S/\q S) \\
&= \sK_S(S/\p S) = \{\p S\}.
\end{align*}
Note that the last equality follows from Proposition~\ref{pr:sKprimary}(\ref{it:coprimary}), since $\p S$ is prime and hence $\p S$-primary.  A final application of Proposition~\ref{pr:sKprimary}(\ref{it:coprimary}) then shows that $IS$ is $\p S$-primary.
\end{proof}

This generalizes \cite[Proposition 2.10]{AnHo-small}, which assumes that both $R$ and $S$ are Noetherian.

\begin{cor} \label{weaksemi}
If $S$ is a faithfully flat algebra over a Noetherian ring $R$, then $S$ is weak content over $R$ if and only if it is semicontent  over $R$.
\end{cor}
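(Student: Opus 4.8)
The plan is to derive Corollary~\ref{weaksemi} as an essentially immediate consequence of Theorem~\ref{thm:primary} together with the characterizations already recorded in the excerpt. Since every semicontent algebra is weak content (this is the chain of implications stated right after Proposition~\ref{pr:omnibus}), only the forward direction requires argument: assuming $R$ is Noetherian and $R \to S$ is a faithfully flat weak content algebra, I want to show $S$ is semicontent over $R$.

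First I would invoke the converse part of Proposition~\ref{pr:primaries}: since $R$ is Noetherian, to conclude that $S$ is semicontent it suffices to show that $QS$ is prime (resp. primary) in $S$ whenever $Q$ is prime (resp. primary) in $R$. The prime case is handled by Proposition~\ref{pr:omnibus}(\ref{it:Ruprimes}): because $S$ is a weak content $R$-algebra, for every $\p \in \Spec R$ either $\p S = S$ or $\p S \in \Spec S$; faithful flatness rules out $\p S = S$ (a proper ideal stays proper under a pure extension, by the remark about $JS \cap R = J$), so $\p S$ is always prime. For the primary case, let $Q$ be a $\p$-primary ideal of $R$. By what was just said, $\p S$ is a prime ideal of $S$, and $R$ is Noetherian, so Theorem~\ref{thm:primary} applies with $I := Q$ and gives that $QS$ is a $\p S$-primary ideal of $S$. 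That verifies the hypothesis of the converse half of Proposition~\ref{pr:primaries}, so $S$ is semicontent over $R$.

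I do not anticipate a genuine obstacle here, since all the heavy lifting has been moved into Theorem~\ref{thm:primary} and the strong-Krull-prime machinery of Proposition~\ref{pr:sKprimary}; the only points requiring any care are the bookkeeping ones: confirming that faithful flatness (hence purity) prevents $\p S$ from being the unit ideal so that Proposition~\ref{pr:omnibus}(\ref{it:Ruprimes}) actually yields primeness rather than the degenerate alternative, and noting that Theorem~\ref{thm:primary}'s hypothesis ``$R$ Noetherian or $\phi$ satisfies INC'' is met by the Noetherian assumption so that no INC condition on $R \to S$ is needed. If anything is subtle, it is simply remembering that the statement of Proposition~\ref{pr:primaries} as given already packages the prime-extends-to-prime and primary-extends-to-primary conditions together as the criterion for being semicontent, so one must check both, not just the primary half.

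\begin{proof}
Every semicontent algebra is a weak content algebra, so only the forward implication needs proof. Assume $R$ is Noetherian and $R \subseteq S$ is a faithfully flat weak content $R$-algebra. By Proposition~\ref{pr:primaries}, since $R$ is Noetherian it suffices to show that $QS$ is a prime (resp. primary) ideal of $S$ whenever $Q$ is a prime (resp. primary) ideal of $R$.

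First let $\p \in \Spec R$. By Proposition~\ref{pr:omnibus}(\ref{it:Ruprimes}), either $\p S = S$ or $\p S \in \Spec S$. Since $R \subseteq S$ is faithfully flat, it is pure, so $\p S \cap R = \p \neq R$, whence $\p S \neq S$. Therefore $\p S$ is a prime ideal of $S$.

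Now let $Q$ be a $\p$-primary ideal of $R$. By the previous paragraph, $\p S$ is a prime ideal of $S$, and $R$ is Noetherian, so Theorem~\ref{thm:primary} (applied with $I := Q$) shows that $QS$ is a $\p S$-primary ideal of $S$. Thus the hypothesis of the converse half of Proposition~\ref{pr:primaries} is satisfied, and $S$ is a semicontent $R$-algebra.
\end{proof}
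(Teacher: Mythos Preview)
Your proof is correct and follows exactly the route the paper intends: the paper's proof simply reads ``Combine Theorem~\ref{thm:primary} with Proposition~\ref{pr:primaries},'' and you have unpacked that combination, including the small but necessary point (via Proposition~\ref{pr:omnibus}(\ref{it:Ruprimes}) and faithful flatness) that primes extend to primes so that Theorem~\ref{thm:primary} applies.
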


\begin{proof}
Combine Theorem~\ref{thm:primary} with Proposition~\ref{pr:primaries}.
\end{proof}

Recall the following theorem from Ananyan-Hochster, slightly rephrased:

\begin{thm}\label{thm:AH-prseq}\cite[Corollary 2.9]{AnHo-small}
Let $K$ be an algebraically closed field, and let $L$ be an extension field of $K$.  Set $S := L[x_1, \ldots, x_n]$.  Let $g_1, \ldots, g_s \in S$ be a sequence of homogeneous nonconstant polynomials that are  algebraically independent over $K$, such that every initial sequence generates a prime ideal of $S$.  Then in the inclusion $R := K[g_1, \ldots, g_s] \subseteq S$, prime ideals of $R$ extend to prime ideals of $S$.
\end{thm}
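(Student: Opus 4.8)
The result is \cite[Corollary 2.9]{AnHo-small}; here is the shape of the proof I would give. The plan is to interpolate the polynomial ring $R_L:=L\otimes_K R=L[y_1,\dots,y_s]$ between $R$ and $S$. The extension $R\to R_L$ is faithfully flat because $K\subseteq L$ is a field extension, and I would show $R_L\to S$ (the map $y_i\mapsto g_i$) is faithfully flat as follows: the $g_i$ form a regular sequence on $S$ (a standard consequence of the prime sequence hypothesis, $S$ being Cohen--Macaulay), so the Koszul complex on $g_1,\dots,g_s$ gives $\Tor^{R_L}_j(L,S)=H_j(g_1,\dots,g_s;S)=0$ for $j>0$ (here $L=R_L/(R_L)_+$), and graded Nakayama --- the $R_L$-grading of $S$ being bounded below --- forces $S$ to be a free, hence faithfully flat, $R_L$-module, with $R_L\hookrightarrow S$ injective. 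So one has $R\subseteq R_L\subseteq S$ with $S$ faithfully flat over $R_L$. Fixing $\p\in\Spec R$ and noting $\p S=(\p R_L)S$, it suffices to establish (i) $\p R_L\in\Spec R_L$, and (ii) $(\p R_L)S\in\Spec S$.

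For (i), which is the one place the algebraic closedness of $K$ is used, I would note that $R_L/\p R_L=(R/\p)\otimes_K L$, that $R/\p$ is a finitely generated domain over $K$, and that since $K$ is algebraically closed it is perfect and relatively algebraically closed in every extension; hence $R/\p$ is geometrically integral over $K$ and the tensor product is a domain.

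For (ii) --- the heart of the matter, where the prime sequence structure does the work --- the point is that, by flatness, $(\p R_L)S$ is prime iff $S/\p S=S\otimes_{R_L}(R_L/\p R_L)$ is a domain. I would prove more: every fibre of $R_L\to S$ over a prime extended from $R$ is a domain, by induction on $s$, in the flexible form in which $S$ may be any finitely generated $L$-domain that is free over a polynomial subring $L[g_1,\dots,g_s]$ with each $(g_1,\dots,g_i)S$ a height-$i$ prime. This set-up is hereditary: $\bar g_2,\dots,\bar g_s$ is such a sequence in the domain $S/g_1S$, which is free over $L[\bar g_2,\dots,\bar g_s]$. So when $g_1\in\p R_L$ one passes to $S/g_1S$ and concludes by induction.

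The case in which $\p R_L$ contains none of the $g_i$ is what I expect to be the main obstacle, and is where genuine work is needed. The strategy there is to use the grading: the torus action degenerates the fibre over $\p R_L$ toward the fibre over the irrelevant ideal $(R_L)_+$, namely $S/(g_1,\dots,g_s)S$, which is a domain by hypothesis, and flatness of $S$ over $R_L$ makes this a flat degeneration, so irreducibility of that special fibre propagates back. The subtle point is reducedness: the special fibre $S/(g_1,\dots,g_s)S$, though a domain, need not be \emph{geometrically} reduced or irreducible over $L$ (for instance $L[x_1,x_2]/(x_1^p+tx_2^p)$ when $t$ has a $p$-th root in $\bar L$ but not in $L$), so the standard openness of the geometrically-integral-fibre locus does not apply; one has to exploit the explicit structure of $S$ as a graded free module over $L[g_1,\dots,g_s]$ --- exactly the analysis carried out in \cite{AnHo-small} --- to control ordinary integrality of all the fibres.
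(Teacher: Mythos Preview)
The paper does not supply its own proof of this theorem: it is quoted as \cite[Corollary 2.9]{AnHo-small} (note the sentence ``Recall the following theorem from Ananyan--Hochster, slightly rephrased'' immediately preceding it) and then used as a black box in the proof of Corollary~\ref{cor:AH}. There is therefore nothing in the present paper to compare your proposal against.

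What you have written is an outline of the Ananyan--Hochster argument itself, and as such it is broadly accurate: the factorization through $R_L=L\otimes_K R$, the freeness of $S$ over $R_L$ via the regular-sequence/Koszul/graded-Nakayama route (which is exactly what the paper alludes to in the proof of Corollary~\ref{cor:AH} when it says ``$S$ is free as an $R$-module''), and the use of algebraic closedness of $K$ precisely to make $(R/\p)\otimes_K L$ a domain, are the essential structural moves. You also correctly flag the residual case --- $\p$ containing none of the $g_i$ --- as the place where the substantive work lies, and you defer to \cite{AnHo-small} for it rather than claiming to complete it; that is appropriate, since that step is genuinely the content of the cited result and not something reproved here.
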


Reinterpreted in the language of Ohm-Rush content, we have:

\begin{cor}\label{cor:AH}
Let $K$, $L$, $\mathbf x$, $\mathbf g$, $R$ and $S$ be as in Theorem~\ref{thm:AH-prseq}. Then $S$ is a semicontent $R$-algebra.
\end{cor}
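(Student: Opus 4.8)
The plan is to deduce Corollary~\ref{cor:AH} from Corollary~\ref{weaksemi}, for which it suffices to verify that $R$ is Noetherian, that $R \subseteq S$ is faithfully flat, and that $S$ is a weak content $R$-algebra. First I would handle the Noetherian hypothesis: since $g_1, \ldots, g_s$ are algebraically independent over $K$, the inclusion $K[g_1, \ldots, g_s] \cong K[y_1, \ldots, y_s]$ is a polynomial ring, hence Noetherian. Next, for faithful flatness of $R \subseteq S$, I would argue that $S = L[\mathbf x]$ is a polynomial ring over the field $L$, hence a domain, and that $R = K[\mathbf g]$ is also a domain with $R \subseteq S$; since $R$ is a polynomial ring over a field and $S$ is torsion-free (being a domain) and the extension is an inclusion of graded rings in nonnegative degrees with $R_0 = K \hookrightarrow L = S_0$, flatness can be obtained from the fact that a graded module over a polynomial ring that is torsion-free need not suffice — so more carefully, I would invoke that $K[\mathbf g] \subseteq L[\mathbf x]$ is flat because one can exhibit $S$ as a free $R$-module, or alternatively cite the standard fact (used implicitly in \cite{AnHo-small}) that a sequence of algebraically independent homogeneous elements whose initial segments generate prime ideals forms a regular sequence, making $S$ flat — in fact free — over $R = K[\mathbf g]$. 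Faithfulness then follows since $R \to S$ is an inclusion of rings with $1 \mapsto 1$ and $S$ is free of positive rank over $R$, so $\m S \neq S$ for every maximal ideal $\m$ of $R$.

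With faithful flatness in hand, I would next observe that flatness plus the hypothesis that $S$ is a polynomial ring (so in particular Noetherian, and certainly a well-behaved flat extension) gives the Ohm-Rush property: here I would appeal to the remark in the introduction that a flat extension with suitable finiteness is intersection-flat, or more directly note that a finitely generated flat — indeed free — algebra is automatically Ohm-Rush, since for a free module the content of an element is just the ideal generated by its coordinates with respect to a basis, and intersections of ideals behave correctly. Once $R \subseteq S$ is a faithfully flat Ohm-Rush extension, Proposition~\ref{pr:omnibus}(\ref{it:Ruprimes}) tells us that it is a weak content algebra precisely when every prime $\p$ of $R$ satisfies $\p S = S$ or $\p S \in \Spec S$. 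But $\p S \neq S$ automatically by faithful flatness (purity gives $\p S \cap R = \p \neq R$), and $\p S$ is prime by the conclusion of Theorem~\ref{thm:AH-prseq}. Hence $R \subseteq S$ is a weak content algebra.

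Finally, since $R$ is Noetherian and $S$ is a faithfully flat weak content $R$-algebra, Corollary~\ref{weaksemi} applies directly and yields that $S$ is a semicontent $R$-algebra, which is the desired conclusion.

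The main obstacle I anticipate is cleanly justifying flatness (and the Ohm-Rush property) of $R = K[\mathbf g] \subseteq S = L[\mathbf x]$: the cited Ananyan-Hochster statement Theorem~\ref{thm:AH-prseq} is phrased only in terms of prime ideals extending to prime ideals, not flatness, so I would need to either extract freeness of $S$ over $R$ from the regular-sequence property of $\mathbf g$ (the standard argument being that a homogeneous sequence in a graded polynomial ring over a field whose initial segments cut out primes — in particular domains of the expected dimension — is a regular sequence, and that a polynomial ring is free, via a homogeneous basis, over the subalgebra generated by a homogeneous regular sequence) or cite the appropriate lemma from \cite{AnHo-small}. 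Everything else is a routine assembly of results already available in the excerpt.
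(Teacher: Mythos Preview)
Your proposal is correct and follows essentially the same route as the paper: establish that $S$ is free over $R$ (the paper does this by citing the ``Extensions of prime ideals'' paragraph in \cite{AnHo-small}, exactly as you anticipated in your final paragraph), deduce that the extension is faithfully flat and Ohm-Rush, invoke Theorem~\ref{thm:AH-prseq} to get weak content via Proposition~\ref{pr:omnibus}(\ref{it:Ruprimes}), and finish with Corollary~\ref{weaksemi}. The paper's proof is terser but structurally identical, and your explicit mention of the Noetherianity of $R$ and the freeness-from-regular-sequence justification are details the paper leaves implicit or outsources to the citation.
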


\begin{proof}
The argument in the paragraph \cite[``Extensions of prime ideals'']{AnHo-small} shows that $S$ is free as an $R$-module, hence it is flat and Ohm-Rush.  Then since primes extend to primes (and hence we have a faithfully flat weak content algebra), an appeal to Corollary~\ref{weaksemi} or \cite[Proposition 2.10]{AnHo-small} finishes the proof.
\end{proof}

In particular, we may consider the special case where $\mathbf g = \mathbf x$, to obtain the following:

\begin{cor}\label{cor:aclosed}
Let $L/K$ be a field extension such that $K$ is algebraically closed.  Then the ring extension $K[x_1, \ldots, x_m] \rightarrow L[x_1, \ldots, x_n]$ yields a semicontent algebra for any pair of nonnegative integers $m\leq n$.
\end{cor}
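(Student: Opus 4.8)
The plan is to read Corollary~\ref{cor:aclosed} off as the special case $\mathbf g = \mathbf x$ of Corollary~\ref{cor:AH} (equivalently Theorem~\ref{thm:AH-prseq}), taking $s = m$ and $g_i = x_i$ for $i = 1, \dots, m$. Here $R = K[x_1, \dots, x_m]$ sits inside $S = L[x_1, \dots, x_n]$ as the evident subring (this is where the hypothesis $m \le n$ is used), so the only thing to do is check that the sub-list $(x_1, \dots, x_m)$ of the variables of $S$ satisfies the hypotheses imposed on $\mathbf g$ in Theorem~\ref{thm:AH-prseq}.

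First I would carry out that verification, which is entirely routine: each $x_i$ is a homogeneous nonconstant (degree-one) element of $S$; the $x_1, \dots, x_m$ are algebraically independent over $K$ (indeed over $L$), being indeterminates; and for each $0 \le j \le m$ the initial sequence generates the ideal $(x_1, \dots, x_j)S$, whose quotient $S/(x_1, \dots, x_j)S \cong L[x_{j+1}, \dots, x_n]$ is a domain, so $(x_1, \dots, x_j)S$ is prime. Hence Corollary~\ref{cor:AH} applies and gives that $S$ is a semicontent $R$-algebra. (If one prefers to avoid the Ananyan--Hochster machinery, the same conclusion follows more directly: $S = R \otimes_K L[x_{m+1}, \dots, x_n]$ with $L[x_{m+1}, \dots, x_n]$ free over $K$, so $S$ is free, hence faithfully flat and Ohm-Rush, over the Noetherian ring $R$; by Corollary~\ref{weaksemi} and Proposition~\ref{pr:omnibus}(\ref{it:Ruprimes}) it then suffices to show $\p S$ is prime for every $\p \in \Spec R$ (it is proper, by purity); writing $D = R/\p$, a domain finitely generated over $K$, one has $S/\p S \cong (D \otimes_K L)[x_{m+1}, \dots, x_n]$, and $D \otimes_K L \hookrightarrow \Frac(D) \otimes_K L$ is a domain because every field extension of the algebraically closed field $K$ is regular.)

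Finally I would dispose of the degenerate case $m = 0$, where $R = K$ is a field: then $S = L[x_1, \dots, x_n]$ is a nonzero $K$-algebra, hence free over $K$, hence faithfully flat and (trivially) Ohm-Rush, and $\orc_K(f) = K$ exactly when $f \ne 0$, so $\orc_K(fg) = \orc_K(g)$ for all $g$ since $S$ is a domain; thus the content function of $K \subseteq S$ is unital, and since the only localization of $R$ to consider is $R$ itself, Proposition~\ref{localsemicontent} gives that $S$ is a semicontent $K$-algebra. I do not expect a genuine obstacle here: the heavy lifting is entirely contained in the already-proved Corollaries~\ref{cor:AH} and~\ref{weaksemi}, and the only point needing care is the bookkeeping that $(x_1, \dots, x_m)$ legitimately plays the role of $\mathbf g$ in Theorem~\ref{thm:AH-prseq} (together with the observation, used in the alternative argument, that algebraically closed base fields yield regular extensions).
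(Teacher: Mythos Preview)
Your proposal is correct and follows exactly the approach the paper takes: the paper's entire proof is the one-line remark that Corollary~\ref{cor:aclosed} is the special case $\mathbf g = \mathbf x$ (i.e., $s=m$, $g_i=x_i$) of Corollary~\ref{cor:AH}, and you have simply filled in the routine verification of the hypotheses of Theorem~\ref{thm:AH-prseq}. Your alternative direct argument and your separate treatment of $m=0$ are extras not in the paper, but they are correct and do not change the fact that the core approach matches.
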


It is natural to ask if there are situations where the base field is \emph{not} algebraically closed but we still have a semicontent extension.  In Proposition~\ref{pr:irredfield}, we will see that we must at least assume that $K$ is (relatively) algebraically closed in $L$.  It turns out that in the two-variable case, even this is not enough (see Example~\ref{ex:aclosed}).

To analyze the situation properly, we recall the following notions and results from the theory of transcendental field extensions:

\begin{defn}
Let $L/K$ be a field extension.  We call the extension \begin{itemize}
\item \emph{separable} if there is a transcendence basis $T$ for $L$ over $K$ such that $L/K(T)$ is a separable algebraic extension,
\item \emph{primary} if the biggest separable algebraic extension of $K$ within $L$ is $K$ itself,
\item \emph{algebraically closed} if the only elements of $L$ that are algebraic over $K$ are already in $K$.
\end{itemize}
\end{defn}

Note the following facts: \begin{itemize}
\item If a field extension $L/K$ is separable and primary, the extension is algebraically closed \cite[Lemma 2.6.4 and Corollary 2.6.14(d)]{FrJa-fieldbook}.
\item The converse holds if $K$ is perfect (e.g. if the fields have characteristic $0$ or $K$ is finite), as every extension of a perfect field is separable.
\end{itemize}

\begin{prop}\label{pr:regext}
Let $L/K$ be a separable, primary extension of fields.  Let $x_1, \ldots, x_n$ be indeterminates over $L$ and choose $0\leq m\leq n$.  Then the extension $R:= K[x_1, \dots, x_m] \subseteq L[x_1, \ldots, x_n] =: S$ is semicontent.
\end{prop}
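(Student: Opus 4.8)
The plan is to reduce to Corollary~\ref{cor:AH} (or equivalently Corollary~\ref{weaksemi} combined with Theorem~\ref{thm:AH-prseq}) by passing to the algebraic closure of $K$ and keeping track of primeness of the relevant extended ideals. First I would verify the structural hypotheses: the extension $R = K[x_1,\dots,x_m] \subseteq S = L[x_1,\dots,x_n]$ is faithfully flat (indeed $S$ is free as an $R$-module, since $S = L[x_1,\dots,x_m][x_{m+1},\dots,x_n]$ and $L[x_1,\dots,x_m]$ is free over $K[x_1,\dots,x_m]$ because $L$ is free over $K$), hence it is Ohm-Rush by Chevalley's-theorem-type reasoning, or more simply because any free module is Ohm-Rush. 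Since $R$ is Noetherian, by Corollary~\ref{weaksemi} it suffices to show that $S$ is a \emph{weak content} $R$-algebra, and by Proposition~\ref{pr:omnibus}(\ref{it:Ruprimes}) this amounts to showing that for every $\p \in \Spec R$, the extension $\p S$ is either all of $S$ or a prime ideal of $S$. Since $R \subseteq S$ is faithfully flat we always have $\p S \neq S$, so the real content is: $\p S$ is prime for every prime $\p$ of $R = K[x_1,\dots,x_m]$.

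The key step is to factor the extension through $\overline{K}$, where $\overline{K}$ is an algebraic closure of $K$ chosen inside an algebraic closure of $L$. Write $R' := \overline{K}[x_1,\dots,x_m]$ and consider $R \subseteq R' \subseteq S' := \overline{K} \cdot L\,[x_1,\dots,x_n]$, where $\overline{K}\cdot L$ denotes the compositum inside a common algebraic closure; note $S \subseteq S'$. The hypothesis that $L/K$ is separable and primary is exactly what guarantees (via \cite[Lemma 2.6.4, Corollary 2.6.14(d)]{FrJa-fieldbook}) that $L$ and $\overline{K}$ are linearly disjoint over $K$ — more precisely, that $\overline{K}\cdot L$ is a field and $\overline{K}\otimes_K L$ is a domain, in fact a field. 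This is the crucial point where separability and primariness enter, and it is the step I expect to be the main obstacle, since one must correctly invoke the linear-disjointness criterion and then propagate it to the polynomial rings: $\overline{K}\otimes_K L$ being a field forces $R' \otimes_R (\text{fraction field of }R/\p)$-type arguments to behave, and ensures that the map $R' \to S'$ has the form covered by Theorem~\ref{thm:AH-prseq} with base field $\overline{K}$ algebraically closed.

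With linear disjointness in hand, I would argue as follows. Given $\p \in \Spec R$, first extend to $R' = \overline{K}[x_1,\dots,x_m]$: since $\overline{K}/K$ is a separable (being algebraic over a field, though one needs $L/K$ separable and the general perfect-closure argument, or one restricts to the separable algebraic closure first and handles purely inseparable part separately) and primary extension — more carefully, one uses that $R \subseteq R'$ is a base change by a field extension that is a filtered union of finite separable-or-purely-inseparable pieces, and in each case $\p R'$ stays prime by standard results on ideals in polynomial rings under separable algebraic base field extension, or directly because $R \subseteq R'$ is again of the type handled by the theorem in the algebraically closed target $\overline K$ — so $\p R' \in \Spec R'$. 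Then by Corollary~\ref{cor:AH} applied to $\overline{K} \subseteq \overline{K}\cdot L$ (legitimate since $\overline{K}$ is algebraically closed and $\overline{K}\cdot L$ is an extension field of it) with $\mathbf g = \mathbf x$, the extension $R' \subseteq S'$ has primes extending to primes, so $\p R' S' \in \Spec S'$. Finally, descend: $\p S = \p R S = (\p S') \cap S$ — here one uses that $S \subseteq S'$ is faithfully flat (again because $L \subseteq \overline K \cdot L$ is, being a field extension) so contraction of the prime $\p S'$ is prime, and that this contraction equals $\p S$ because $\p S' \cap S$ contains $\p S$ and lies over $\p S'\cap R = \p$ with $\p S'/\p S'\cap S$ injective. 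Hence $\p S$ is prime for all $\p$, $S$ is weak content over $R$, and by Corollary~\ref{weaksemi} it is semicontent. The only delicate bookkeeping, beyond the linear-disjointness invocation already flagged, is to handle the purely inseparable part of $\overline K / K$ when $K$ is imperfect; but since we assumed $L/K$ is separable, $L \otimes_K K^{1/p}$-type rings remain reduced, and the primary hypothesis is preserved under this base change, so the argument goes through uniformly — I would spell this out by first replacing $K$ by its separable algebraic closure in $\overline K$ (an algebraic, separable, hence by primariness trivial-on-$L$-relative extension) and then noting the remaining purely inseparable ascent is a flat integral bijective-on-spectra base change, which preserves primeness of extended ideals in polynomial rings.
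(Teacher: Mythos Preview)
Your approach has a genuine gap at the step where you claim $\p R' \in \Spec R'$ for $R' = \overline{K}[x_1,\dots,x_m]$. This is false in general: take $K = \R$, $m=1$, and $\p = (x_1^2+1)$; then $\p R' = (x_1-i)(x_1+i)\C[x_1]$ is not prime. None of the justifications you offer can hold, since $\overline{K}/K$ is essentially never primary (the separable algebraic closure of $K$ inside $\overline K$ is $K^{\mathrm{sep}}$, not $K$), and there is no ``standard result'' saying primes of $K[\mathbf x]$ extend to primes of $\overline{K}[\mathbf x]$. Consequently $\p S'$ need not be prime either---in the same example with $L = \R(t)$ (a separable, primary extension of $\R$) one has $\p S' = (x_1^2+1)\C(t)[x_1]$, which is not prime---so your descent step ``contraction of the prime $\p S'$ is prime'' collapses, even though $\p S = (x_1^2+1)\R(t)[x_1]$ \emph{is} prime here. (Incidentally, $\overline{K}\otimes_K L$ is a domain for a regular extension but almost never a field: e.g.\ $\overline{\Q}\otimes_\Q \Q(t)$ is a proper localization of $\overline{\Q}[t]$.)

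The paper avoids the passage through $\overline K$ entirely. It factors $R \subseteq T := L[x_1,\dots,x_m] \subseteq S$ and uses a transitivity result from \cite{nmeSh-OR} to reduce to showing $T$ is semicontent over $R$; then, via Corollary~\ref{weaksemi}, it suffices that $T/\p T \cong (R/\p) \otimes_K L$ be a domain for each $\p \in \Spec R$. This is proved directly (Lemma~\ref{lem:regext}): letting $F = \Frac(R/\p)$, separability of $L/K$ makes $F\otimes_K L$ reduced and primariness makes $\Spec(F\otimes_K L)$ irreducible (both via \cite{EGA-4.2}), so $F\otimes_K L$ is a domain and $(R/\p) \otimes_K L$ embeds in it by flatness. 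Thus the hypotheses ``separable'' and ``primary'' are each used once, directly, with no detour through an algebraic closure.
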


\begin{proof}
Write $T := L[x_1, \ldots, x_m]$.  Since $S$ is a content $T$-algebra, by \cite[Theorem 3.10]{nmeSh-OR} it will suffice to show that $T$ is a semicontent $R$-algebra.

For the usual reasons, $T$ is faithfully flat and module-free (hence Ohm-Rush) over $R$.  Hence by Corollary~\ref{weaksemi}, it is enough to show that prime ideals of $R$ extend to prime ideals of $T$.  Accordingly, let $\p \in \Spec R$.  Set $A := R/\p$.  Note that $T/\p T = A \otimes_K L$.  By Lemma~\ref{lem:regext} below, $T/\p T$ is therefore an integral domain.  Hence $\p T \in \Spec T$, completing the proof.
\end{proof}

It remains to prove the following presumably well-known Lemma.

\begin{lemma}\label{lem:regext}
Let $L/K$ be a separable, primary field extension, and let $A$ be a $K$-algebra.  If $A$ is an integral domain, so is $A \otimes_KL$.
\end{lemma}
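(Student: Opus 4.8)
The plan is to reduce to the case where $L/K$ is finitely generated and then use a two-step filtration of the extension: first a purely transcendental part, then a primary algebraic part. More precisely, since tensor products commute with directed colimits and a directed colimit of integral domains (with injective transition maps) is again an integral domain, it suffices to prove the statement when $L$ is a finitely generated field extension of $K$. Choose a separating transcendence basis $t_1, \dots, t_d$ of $L/K$, which exists because $L/K$ is separable; write $F := K(t_1, \dots, t_d)$, so that $L/F$ is a finite separable algebraic extension. Then $A \otimes_K L = (A \otimes_K F) \otimes_F L$, and I would handle the two layers in turn.

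For the transcendental layer, I would observe that $A \otimes_K F = A \otimes_K K(t_1,\dots,t_d)$ is a localization of the polynomial ring $A[t_1, \dots, t_d]$ (namely, inverting the nonzero elements of $K[t_1,\dots,t_d]$). Since $A$ is a domain, $A[t_1, \dots, t_d]$ is a domain, and hence so is its localization $A \otimes_K F$; call it $A'$. The harder layer is the finite separable algebraic extension $L/F$: I need that $A' \otimes_F L$ is a domain whenever $A'$ is an $F$-domain. Here I would use primality: the hypothesis that $L/K$ is a \emph{primary} extension means $K$ is separably algebraically closed in $L$, and combined with separability this says (by the cited \cite[Lemma 2.6.4 and Corollary 2.6.14(d)]{FrJa-fieldbook}) that $L/K$ is a \emph{regular} extension, i.e. $L \otimes_K \overline{K}$ is a domain (equivalently, $L \otimes_K E$ is a domain for every field extension $E/K$, and moreover a reduced ring). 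The standard fact I would invoke is that $L/K$ regular is equivalent to: $L \otimes_K B$ is a domain for every $K$-domain $B$; one direction of this is exactly what the lemma asserts. To prove that direction directly, I would pass to fraction fields: let $B$ be a $K$-domain with fraction field $E$; then $B \otimes_K L \subseteq E \otimes_K L$ (flatness of $L$ over $K$), so it suffices to show $E \otimes_K L$ is a domain, and since $L/K$ is finitely generated this is a finite-type $E$-algebra which is reduced (as $L/K$ is separable, hence geometrically reduced) and has a unique minimal prime (as $L/K$ is primary, hence $K$ separably closed in $L$, giving geometric irreducibility) — so it is a domain.

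The main obstacle is the algebraic (primary separable) layer: proving that geometric irreducibility plus geometric reducedness of $L/K$ forces $L \otimes_K E$ to be an integral domain for an \emph{arbitrary} extension field $E/K$, not just for $E = \overline{K}$. I expect to navigate this either by citing the equivalence ``regular $\iff$ $L \otimes_K E$ a domain for all $E/K$'' directly from a standard reference (Bourbaki, \emph{Algèbre}, Ch.~V, or the Stacks Project tag on geometrically integral algebras), or by the following self-contained argument: $L \otimes_K E$ is reduced because $L/K$ is separable (base change of a separable, hence geometrically reduced, algebra stays reduced), and it is irreducible because a reduced ring with more than one minimal prime would, after base change to a common algebraically closed overfield of $E$ and $\overline{K}$, contradict the irreducibility of $L \otimes_K \overline{K}$; a reduced irreducible commutative ring is a domain. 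I would present the cleanest available version, leaning on \cite{FrJa-fieldbook} for the translation between ``separable $+$ primary'' and ``regular'', and on a standard commutative-algebra reference for ``regular $\implies$ domain-preserving under tensor with any $K$-domain.''
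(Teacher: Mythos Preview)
Your core argument---embed $A \otimes_K L$ into $\Frac(A) \otimes_K L$ by flatness of $L$ over $K$, then show the latter is a domain because separability of $L/K$ makes it reduced and primality makes its spectrum irreducible---is exactly the paper's proof, which dispatches it in three lines by citing EGA IV$_2$, 4.3.2 and 4.3.5. The reduction to finitely generated $L$ and the two-layer filtration through a separating transcendence basis are scaffolding you erect and then, correctly, abandon: note that the intermediate claim you isolate, ``$A' \otimes_F L$ is a domain for every $F$-domain $A'$,'' is \emph{false} in general, since $L/F$ is merely finite separable and need not be regular over $F$ (think $\Q(i)\otimes_\Q \Q(i)$); the primality hypothesis lives over $K$, not over $F$, which is why you are forced back to the global $L/K$ statement. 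Strip the detours and you have the paper's proof verbatim.
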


\begin{proof}
Let $F$ be the fraction field of $A$.  By \cite[4.3.2 and 4.3.5]{EGA-4.2}, $F \otimes_K L$ is reduced and has irreducible prime spectrum.  Hence it is an integral domain.  But since $L$ is a flat $K$-algebra, the injective map $A \hookrightarrow F$ base-changes to an injective map $A \otimes_K L \hookrightarrow F \otimes_K L$.  Since any subring of an integral domain is an integral domain, we are done.
\end{proof}

The combination of Corollary~\ref{cor:aclosed} and Proposition~\ref{pr:regext} leads naturally to the question: Is it true that for \emph{any} field extension $L/K$ with $K$ algebraically closed in $L$ and any $n\in \N$, $L[x_1, \ldots, x_n]$ is a content algebra over $K[x_1, \ldots, x_n]$?  We will see in Proposition~\ref{pr:irredfield} that when $n=1$, the answer is ``yes''.  However, the answer can be ``no'' when $n=2$.

\begin{example}\label{ex:aclosed}
Let $p$ be a prime number, $\F_p$ the field of $p$ elements, let $a,b,s,t,x,y$ be algebraically independent indeterminates over $\F_p$ and set $K=\F_p(s,t)$.  Set $C := K[a,b] / (sa^p + tb^p - 1)$.  Note that $C$ is an integral domain (since $sa^p + tb^p - 1$ is irreducible).  Let $L$ be the fraction field of $C$.  Then $K$ is algebraically closed in $L$ \cite[p. 384]{Mac-modI}.  Set $R := K[x,y]$ and $S:= L[x,y]$.  Write $\m := (x^p - s, y^p - t)R$.  Then $\m$ is a maximal ideal of $R$, because $R/\m = \F_p(s^{1/p}, t^{1/p})$ (essentially $x$ and $y$ act as $p$th roots of $s$, $t$ respectively).  However, $\m S$ is not even a radical ideal of $S$, because $(xa+yb-1)^p \in \m S$ even though $xa+yb-1 \notin \m S$.

In particular, even though $K$ is algebraically closed in $L$, $L[x,y]$ is \emph{not} a weak content $K[x,y]$-algebra.
\end{example}

Finally in this vein, we have the following (potentially) stronger result in comparison to Lemma~\ref{lem:regext} and Proposition~\ref{pr:regext}.  (It is only really stronger if content algebras are different from semicontent algebras).

\begin{prop}\label{pr:purecontent}
Let $L/K$ be a purely transcendental field extension, and let $A$ be a ring that contains $K$.  Then $A \otimes_K L$ is a \emph{content} algebra over $A$.
\end{prop}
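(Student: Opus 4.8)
The plan is to present $A \otimes_K L$ as a localization of a polynomial extension of $A$ and then apply the localization result for content algebras, Proposition~\ref{pr:omnibus}(\ref{it:OR62}).

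Since $L/K$ is purely transcendental, I would write $L = K(\{t_i\}_{i\in I})$ for a (possibly infinite) family of indeterminates $\{t_i\}$, so that $L = K[\{t_i\}]_W$ with $W := K[\{t_i\}]\setminus\{0\}$. Set $S := A[\{t_i\}_{i\in I}]$ and regard $W$ as a multiplicative subset of $S$ via the inclusion $K[\{t_i\}] \subseteq S$ (it is multiplicatively closed because $K[\{t_i\}]$ is a domain). Since tensor product commutes with localization and with direct sums, there is a natural identification
\[
A\otimes_K L \;=\; A\otimes_K\bigl(K[\{t_i\}]_W\bigr) \;=\; \bigl(A\otimes_K K[\{t_i\}]\bigr)_W \;=\; S_W .
\]
Next I would recall that $A \to S$ is a content algebra: $S$ is $A$-free, hence faithfully flat and Ohm-Rush, with $\orc(f)$ equal to the $A$-ideal generated by the coefficients of $f$, and the identity $\orc(f)^n\orc(g) = \orc(f)^{n-1}\orc(fg)$ follows from the Dedekind--Mertens lemma after passing to the finitely many variables occurring in $f$ and $g$ (over which the content is unchanged). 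To apply Proposition~\ref{pr:omnibus}(\ref{it:OR62}) to $S$ and $W$, I must check that $\orc(w)\cap W \neq\emptyset$ for every $w\in W$: such a $w$ is a nonzero polynomial all of whose coefficients lie in $K$, and at least one of them is a nonzero element of $K$, hence a unit of $A$ (as $K$ is a subfield of $A$); therefore $\orc(w) = A$ and $1\in \orc(w)\cap W$. Moreover $W\cap A = K\setminus\{0\}$, a set of units of $A$, so $A_{W\cap A} = A$. Thus Proposition~\ref{pr:omnibus}(\ref{it:OR62}) gives that $S_W = A\otimes_K L$ is a content algebra over $A_{W\cap A} = A$, and in fact records the formula $\orc_W(g/w) = \orc(g)_{W\cap A} = \orc(g)$ for $g\in S$, $w\in W$.

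I do not anticipate a genuine obstacle here; the two points requiring care are the identification $A\otimes_K L \cong S_W$ in the case of infinite transcendence degree and the verification that $\orc(w) = A$ for $w\in W$, the latter being exactly where the hypothesis that $A$ contains the field $K$ enters. (One could instead write $L$ as a filtered union of its finitely generated purely transcendental subextensions, but the localization argument above seems cleanest.)
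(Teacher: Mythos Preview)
Your argument is correct and follows essentially the same route as the paper: write $L = K(T)$ for a transcendence basis $T$, set $S = A[T]$ and $W = K[T]\setminus\{0\}$, observe that every $w\in W$ has unit content in $A$, and apply Proposition~\ref{pr:omnibus}(\ref{it:OR62}) to conclude that $S_W = A\otimes_K L$ is a content $A$-algebra. Your version is somewhat more detailed (spelling out the identification $A\otimes_K L \cong S_W$, the Dedekind--Mertens reduction to finitely many variables, and the equality $A_{W\cap A}=A$), but the key idea and the invocation of the localization result are identical.
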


\begin{proof}
Let $T$ be a transcendence basis of $L$ over $K$.  Set $B : = A[T]$, and let $W := K[T] \setminus \{0\}$.  Then $W$ is a multiplicatively closed subset of $B$, and in the content map from $B$ to the ideals of $A$, every element of $W$ has unit content.  Thus by Proposition~\ref{pr:omnibus}(\ref{it:OR62}), $B_W = A \otimes_KL$ is a content $A$-algebra.
\end{proof}

Recall that in Theorem~\ref{thm:primary}, we assumed that either the base ring is Noetherian or the ring map satisfies INC.  By Corollary~\ref{weaksemi}, the former condition makes weak content faithfully flat algebras semicontent.  It turns out that the latter one does as well.

\begin{lemma}\label{lem:INCinj}
Let $R\subset S$ satisfy INC and suppose that prime ideals extend 
to prime ideals.  Then the induced map $\Spec S \rightarrow \Spec R$ is injective, and for any $\p$ in the image of the map, its unique preimage is $\p S$.
\end{lemma}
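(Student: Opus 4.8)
The plan is to prove the slightly stronger statement that if $\p\in\Spec R$ is in the image of $\Spec S\to\Spec R$, then $\p S$ is the \emph{only} prime of $S$ lying over $\p$. Injectivity of the map and the identification of the preimage both follow at once: for an arbitrary $\p\in\Spec R$ the fiber of $\Spec S\to\Spec R$ over $\p$ will then be either empty or the single point $\p S$.

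The argument I have in mind is short. Fix $\p$ in the image and let $Q\in\Spec S$ satisfy $Q\cap R=\p$. From $\p\subseteq Q$ we get $\p S\subseteq Q$, and by hypothesis $\p S$ is itself a prime of $S$. Next I would check that $\p S$ and $Q$ have the same contraction to $R$: we have $\p\subseteq \p S\cap R\subseteq Q\cap R=\p$, so $\p S\cap R=\p=Q\cap R$. Thus $\p S\subseteq Q$ are comparable primes of $S$ with equal contraction to $R$, and the INC hypothesis forces $\p S=Q$. (Equivalently, phrased via fibers: the fiber ring $S_{R\setminus\p}/\p S_{R\setminus\p}$ is zero-dimensional by INC, and $\p S$ corresponds to its zero ideal, so this ring is a zero-dimensional domain, hence a field, whose only prime is $(0)$.) Since $Q$ was arbitrary among primes of $S$ over $\p$, the fiber over $\p$ equals $\{\p S\}$, which is exactly what was wanted.

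I do not expect a genuine obstacle here; the only point needing care is that INC must be applied to a genuinely comparable pair of primes sharing a contraction, and this is precisely where the hypothesis that primes extend to primes enters—it is what makes $\p S$ a prime at all, so that it can serve as the smaller member of the comparable pair $\p S\subseteq Q$. Without that hypothesis the conclusion fails, as $\Z\subset\Z[i]$ satisfies INC yet $5\Z$ has two primes lying over it; this also provides a sanity check on the role of each hypothesis in the argument.
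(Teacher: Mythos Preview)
Your proof is correct and is essentially the same as the paper's: both show that any prime $Q$ of $S$ lying over $\p$ must equal $\p S$ by sandwiching $\p \subseteq \p S \cap R \subseteq Q \cap R = \p$ and then invoking INC on the comparable pair $\p S \subseteq Q$. The paper phrases it by starting from an arbitrary $P \in \Spec S$ and setting $\p := P \cap R$, but the content is identical.
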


\begin{proof}
Let $P\in \Spec S$.   Set $\p := P\cap R$.   Then $\p \subseteq \p S \cap R \subseteq P \cap R = \p$, so both of the prime ideals in the chain $\p S \subseteq P$ contract to $\p$.  Hence by INC, $P = \p S$, and we have injectivity.
\end{proof}

\begin{prop}\label{pr:weaksemiINC}
Let $S$ be a faithfully flat weak content $R$-algebra such that $R \subset S$ satisfies INC.  Then $S$ is semicontent over $R$.
\end{prop}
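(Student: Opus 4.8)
The plan is to reduce, via localization, to the statement that the content function of $R\subseteq S$ itself is unital, and then to derive a contradiction from Lemma~\ref{lem:INCinj}. Since $R$ is not assumed Noetherian, the characterization in Proposition~\ref{pr:primaries} is not directly available, so I argue by hand. By the definition of a semicontent algebra, it suffices to prove the following claim: if $R\subseteq S$ is a faithfully flat weak content algebra satisfying INC, then for all $f,g\in S$ with $\orc(f)=R$ one has $\orc(fg)=\orc(g)$. Indeed, granting this, for every multiplicatively closed $W\subseteq R$ the extension $R_W\subseteq S_W$ is again faithfully flat (lying-over for $R\subseteq S$ supplies, over any prime of $R_W$, a prime of $S$ meeting $W$ emptily, so $\Spec S_W\to\Spec R_W$ is surjective), again weak content by Proposition~\ref{localweak}, and again satisfies INC since its fibres occur among those of $R\subseteq S$; so $R_W\subseteq S_W$ has a unital content function for every $W$, which is exactly what it means for $S$ to be semicontent over $R$.

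To prove the claim, note first that $\orc(fg)\subseteq\orc(f)\orc(g)=R\,\orc(g)=\orc(g)$ by Proposition~\ref{pr:omnibus}(\ref{it:Rumult}). For the reverse inclusion, put $J:=\orc(fg)$; since $S$ is Ohm-Rush we have $fg\in JS$, so $f$ lies in the ideal $N:=(JS:_S g)$ of $S$. Suppose, for contradiction, that $g\notin JS$; then $N$ is a proper ideal of $S$, hence $N\subseteq Q$ for some $Q\in\Spec S$. Because $S$ is faithfully flat over $R$, $\p S\neq S$ for every $\p\in\Spec R$ (purity gives $\p S\cap R=\p$), so by Proposition~\ref{pr:omnibus}(\ref{it:Ruprimes}) prime ideals of $R$ extend to prime ideals of $S$; thus Lemma~\ref{lem:INCinj} applies and yields $Q=\p S$, where $\p:=Q\cap R\in\Spec R$. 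But then $f\in Q=\p S$ forces $\orc(f)\subseteq\p$, contradicting $\orc(f)=R$. Hence $g\in JS=\orc(fg)S$, so $\orc(g)\subseteq\orc(fg)$, giving $\orc(fg)=\orc(g)$ and completing the proof.

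Most of the work has already been front-loaded into Lemma~\ref{lem:INCinj}, so I do not expect a serious obstacle here; the only points requiring a little care are the verifications that faithful flatness, the weak content property, and INC all descend to the localizations $R_W\subseteq S_W$ used in the reduction, each of which is routine.
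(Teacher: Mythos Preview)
Your proof is correct and takes essentially the same approach as the paper: both hinge on Lemma~\ref{lem:INCinj} (every prime of $S$ is extended from $R$), from which it follows that an element of unit content lies in no prime of $S$ and is therefore a unit, giving $\orc(fg)=\orc(g)$ at once. The only difference is organizational---the paper first localizes via Proposition~\ref{localsemicontent} so that $(R,\p)$ is local and then shows $(S,\p S)$ is local, whereas you prove the unital claim globally (your colon-ideal device being a small detour to the same conclusion) and pass to localizations afterward.
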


\begin{proof}
By Proposition~\ref{localsemicontent}, we need only show that the extension $R_\p 
\subset S_{R\setminus\p}$ has unital content.  Hence, we may assume that $R$ is local with 
maximal ideal $\p$.   First we claim that $S$ is local, with maximal ideal $\p S$.  To see this, let $\n$ be a maximal ideal of $S$.  Since $(\n \cap R)S \cap R = \n \cap R$ by purity, and since $(\n \cap R)S$ is prime, we have $(\n \cap R)S = \n$ by the INC property.  But  $\n\cap R \subseteq \p$, as $\p$ is the unique maximal ideal of $R$.  Thus, $\n = 
(\n\cap R)S \subseteq \p S \subseteq \n$, whence $\n = \p S$.

Now let $f, g \in S$ with $\orc(f) = R$.   Then $f \not\in\p S$.  Hence $f$ is a unit of 
$S$ and so $\orc(fg) = \orc(g)$.
\end{proof}

We shift our attention next to direct products of rings and Artinian rings.

\begin{prop} \label{directsum}
Let $R\subset S$ be rings and suppose that $R$ decomposes as $R=R_1\times R_2\times \cdots\times R_n$.   Then $S = S_1\times\cdots S_n$, where $R_i\subset S_i$.  Moreover, $S$ is $($semi$)$content over $R$ if and only if $S_i$ is $($semi$)$content over $R_i$ for each $i=1,2,\ldots, n$.
\end{prop}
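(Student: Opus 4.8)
The plan is to reduce everything to the idempotent decomposition of $R$ together with facts already in hand, notably Proposition~\ref{localsemicontent} for the semicontent part and the exponent bookkeeping of Proposition~\ref{localcontent} for the content part. Let $e_1, \dots, e_n \in R$ be the orthogonal idempotents with $\sum_i e_i = 1$ and $e_i R = R_i$. Since $R$ and $S$ share an identity, the $e_i$ are orthogonal idempotents of $S$ summing to $1$, so $S = \prod_i e_i S$; I put $S_i := e_i S$, a ring with identity $e_i$ containing $R_i = e_i R$, which gives the first assertion. More generally every $R$-module $M$ splits as $M = \bigoplus_i M_i$ with $M_i := e_i M$ an $R_i$-module, every ideal $I$ of $R$ is uniquely a product $\prod_i I_i$ of ideals $I_i = e_i I \subseteq R_i$, and $IM = \bigoplus_i I_i M_i$. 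From the equivalence $f \in IM \iff f_i \in I_i M_i$ for all $i$ (writing $f = (f_1, \dots, f_n)$), together with the fact that each component $I_k$, $k \neq i$, may be freely enlarged to $R_k$, one gets by a short computation — with no standing hypothesis — that $\orc_R(f) = \prod_i \orc_{R_i}(f_i)$; in particular $S$ is Ohm-Rush over $R$ iff each $S_i$ is Ohm-Rush over $R_i$, and then $e_i \orc_R(f) = \orc_{R_i}(e_i f)$. Moreover $\prod_i S_i$ is (faithfully) flat over $\prod_i R_i$ iff each $S_i$ is (faithfully) flat over $R_i$, since tensor products and exactness decompose over the product and $\Max(\prod_i R_i) = \bigcup_i \{\, \prod_{k \neq i} R_k \times \m : \m \in \Max(R_i)\,\}$ with $\m S \neq S$ iff $\m S_i \neq S_i$.

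For the content case, note that since the equality $\orc_R(f) = \prod_i \orc_{R_i}(f_i)$ is honest and powers and products of ideals of $\prod_i R_i$ are formed factorwise, the relation $\orc_R(f)^k \orc_R(g) = \orc_R(f)^{k-1} \orc_R(fg)$ is equivalent to the conjunction over $i$ of $\orc_{R_i}(f_i)^k \orc_{R_i}(g_i) = \orc_{R_i}(f_i)^{k-1} \orc_{R_i}(f_i g_i)$. Hence if $S$ is content over $R$, then, since $S$ is in particular faithfully flat and Ohm-Rush over $R$ (so each $S_i$ is over $R_i$, by the first paragraph), applying the defining relation to pairs of elements of the ideal $e_i S$ of $S$ and reading off the $i$th component shows $S_i$ is content over $R_i$; conversely, if every $S_i$ is content over $R_i$, then given $f, g \in S$ one picks $n_i$ valid in the $i$th factor, sets $n := \max_i n_i$, and bumps exponents exactly as in the proof of Proposition~\ref{localcontent} to obtain the exponent-$n$ relation in every factor, hence $\orc_R(f)^n \orc_R(g) = \orc_R(f)^{n-1} \orc_R(fg)$.

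For the semicontent case I would route through Proposition~\ref{localsemicontent}. Given $\p \in \Spec R$, since $R/\p$ is a domain exactly one $e_j$ maps to $1$ and $\p = \prod_{k \neq j} R_k \times \p_j$ for a prime ideal $\p_j$ of $R_j$; hence on localizing at $R \setminus \p$ the idempotent $e_j$ becomes a unit, hence $1$, and every other $e_i$ becomes $0$, so that $R_\p = (R_j)_{\p_j}$ and $S_{R \setminus \p} = (S_j)_{R_j \setminus \p_j}$ — precisely the extension that Proposition~\ref{localsemicontent} attaches to $\p_j \in \Spec R_j$. Since faithful flatness and Ohm-Rushness transfer both ways (first paragraph), Proposition~\ref{localsemicontent} applies to $R \subset S$ and to each $R_i \subset S_i$; and as $\p$ runs over $\Spec R = \coprod_i \Spec R_i$, the criterion ``$R_\p \subset S_{R \setminus \p}$ has unital content for every $\p$'' for $R \subset S$ decomposes into exactly the same criterion for each $R_i \subset S_i$, which yields the equivalence.

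The only step that is not formal bookkeeping with the idempotents $e_i$ is this last reduction: one has to check that localizing the product ring $R = \prod_i R_i$, and $S$, at a prime ideal collapses it onto a localization of a single factor, and then recognize the resulting extension as the one appearing in Proposition~\ref{localsemicontent}. Working directly from Definition~\ref{defs}(2) would be clumsier, since a multiplicative subset of $\prod_i R_i$ need not be a product of multiplicative subsets of the factors; passing through Proposition~\ref{localsemicontent} avoids that.
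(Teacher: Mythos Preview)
Your argument is correct and follows essentially the same route as the paper: idempotent decomposition, componentwise transfer of faithful flatness and the Ohm-Rush property via $\orc_R(f)=\prod_i \orc_{R_i}(f_i)$, the max-of-exponents trick for the content case, and the reduction of the semicontent case to Proposition~\ref{localsemicontent} by identifying localizations of the product with localizations of a single factor. The only cosmetic difference is that you invoke condition~(3) of Proposition~\ref{localsemicontent} (primes) while the paper uses condition~(2) (maximals), which is immaterial.
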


This result can be seen as complementary to \cite[Corollary 1.4]{OhmRu-content}
\begin{proof}
 Let $e_1,e_2,\ldots,e_n$ be the corresponding orthogonal set of idempotents for the decomposition of $R$ (so $R_i = e_iR$).  Let $S_i = e_iS$ and so $S_i$ is an algebra over $R_i$ and $S=S_1\times S_2\times \cdots\times S_n$.
Any ideal $J$ of $R$ has the form $J_1\times J_2\times \cdots \times J_n$, where $J_i = e_iJ$.   Hence it is clear that $S$ is faithfully flat over $R$ if and only if each $S_i$ is faithfully flat over $R_i$.   Note  that for $J$ an ideal of $R$ and $f\in S$, then $f\in JS$ if and only if $e_i f =f_i \in J_iS$.  Since  intersection distributes over the direct product,
we also have for $f\in S$, that  $\orc(f) = \sum \orc_i(f_i)$, where $\orc_i$ denotes the Ohm-Rush content function of  $S_i$ over $R_i$.  Then $f\in \orc(f)S$, if and only if $f_i\in \orc_i(f_i)$ for $i=1,2,\ldots, n$.   Hence $S$ is Ohm-Rush over $R$ if and only if $S_i$ is Ohm-Rush over each $R_i$.

Let $f,g \in S$ with $\orc(f)^{n}\orc(g) = \orc(f)^{n-1}\orc(fg)$ for some $n$.  It follows that for each $i=1,2,\ldots,n$, $\orc(f_i)^{n}\orc(g_i) = \orc(f_i)^{n-1}\orc(f_ig_i)$.   Conversely suppose that for each $i$, there exists $n_i$ such that $\orc(f_i)^{n_i}\orc(g_i) = \orc(f_i)^{n_i-1}\orc(f_ig_i)$.  If we let $n=\max\{n_i\}$, it is clear that $\orc(f)^{n}\orc(g) = \orc(f)^{n-1}\orc(fg)$.  Hence $S$ is content over $R$ if and only if $S_i$ is content over $R_i$ for each $i$.

Next we show that being semicontent is also a coordinate-wise property.  First observe that an ideal $\m$ of $R$ is a maximal ideal if and only if for some $i=1,2,\ldots, n$, $\m$ has the form $R_1\times R_2\times \cdots \times\m_i\times \cdots \times R_n$, where $\m_i$ is a maximal ideal of $R_i$.    It then follows that $R_\m = (R_i)_{\m_i}$ and $S_{R\setminus \m} = (S_i)_{R_i\setminus\m_i}$ canonically.   Hence by Proposition~\ref{localsemicontent} $S$ is semicontent over $R$ if and only if $S_i$ is semicontent over $R_i$ for each $i=1,2,\ldots,n$.
\end{proof}

\begin{prop}\label{pr:ArtOR}
Let $R$ be an Artinian ring and let $M$ be a flat $R$-module.  Then $M$ is an Ohm-Rush $R$-module.
\end{prop}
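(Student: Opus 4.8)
The plan is to reduce everything to the characterization in Proposition~\ref{pr:omnibus}(\ref{it:OR12ii}): $M$ is Ohm-Rush over $R$ exactly when $\left(\bigcap_\alpha I_\alpha\right)M = \bigcap_\alpha I_\alpha M$ for every family $\{I_\alpha\}$ of ideals of $R$. The inclusion ``$\subseteq$'' holds for any module, so the whole content is the reverse inclusion, and the strategy is to collapse an arbitrary intersection of ideals down to a \emph{finite} sub-intersection (using that $R$ is Artinian) and then apply flatness, which handles finite intersections.

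First I would record the standard fact that a flat module commutes with finite intersections of ideals: for ideals $I,J\subseteq R$ one has $(I\cap J)M = IM\cap JM$ as submodules of $M$. This follows by tensoring the short exact sequence $0\to I\cap J\to I\oplus J\to I+J\to 0$ (with $(a,b)\mapsto a-b$) by the flat module $M$, using that the natural maps identify $I\otimes_R M$ with $IM$, $J\otimes_R M$ with $JM$, and $(I+J)\otimes_R M$ with $(I+J)M$ inside $M$; then $(I\cap J)M$ is identified with the kernel of $IM\oplus JM\to (I+J)M$, which is precisely $IM\cap JM$. An easy induction extends this to any finite collection of ideals.

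Next I would invoke the descending chain condition on $R$. Given a family $\{I_\alpha\}$, the set of all finite intersections $I_{\alpha_1}\cap\cdots\cap I_{\alpha_k}$ has a minimal element $J=I_{\alpha_1}\cap\cdots\cap I_{\alpha_n}$; for any further index $\beta$, the ideal $J\cap I_\beta$ is again a finite intersection and is contained in $J$, so minimality gives $J\cap I_\beta=J$, i.e. $J\subseteq I_\beta$. Hence $J=\bigcap_\alpha I_\alpha$ is realized by a finite sub-intersection. Combining the two steps, $\bigcap_\alpha I_\alpha M\subseteq\bigcap_{i=1}^n I_{\alpha_i}M=\left(\bigcap_{i=1}^n I_{\alpha_i}\right)M=JM=\left(\bigcap_\alpha I_\alpha\right)M$, which is the desired inclusion.

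I do not expect a genuine obstacle here; the only mildly delicate point is the passage from binary to arbitrary intersections, and the DCC argument above disposes of it cleanly. (Alternatively, one could decompose the Artinian ring as a finite product of Artinian local rings and observe that over a local Artinian ring every flat module is free — Artinian rings are perfect, so flat modules are projective by Bass, and projective modules over local rings are free by Kaplansky — and free modules are visibly Ohm-Rush, then reassemble via the idempotent bookkeeping used in the proof of Proposition~\ref{directsum}; but the direct argument above is shorter and more self-contained.)
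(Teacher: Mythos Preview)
Your proof is correct and takes essentially the same approach as the paper: both use flatness to get $(I\cap J)M = IM\cap JM$ for finite intersections, and then the Artinian minimality condition to reduce to the finite case. The only cosmetic difference is that the paper argues pointwise from the definition (for each $g\in M$, the set $\{I : g\in IM\}$ is closed under finite intersections and hence has a minimum by DCC), whereas you route through the equivalent intersection characterization of Proposition~\ref{pr:omnibus}(\ref{it:OR12ii}).
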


\begin{proof}
Let $g \in M$, and let $X := \{I $ ideal of $R \mid g \in IM\}$.  Note that $X$ is closed under finite intersection, since by flatness of $M$ over $R$, we have $IM \cap JM = (I \cap J)M$ for all ideals $I$, $J$.  On the other hand, since $R$ is Artinian, it satisfies the minimality condition.  That is, every nonempty set of ideals has a minimal element.  We have $(1) \in X$, so $X \neq \emptyset$.  Accordingly, let $J$ be a minimal element of $X$.  Then $g\in JM$, and for any $I \in X$, we have $I \cap J \in X$, whence by minimality $I \cap J = J$, which means that $J \subseteq I$.  Thus by definition, $J = \orc(g)$, so that $M$ is an Ohm-Rush $R$-module.
\end{proof}

\begin{thm}\label{thm:Artequiv}
Let $R\subset S$ where $R$ is Artinian.  Then the following are equivalent.
\begin{enumerate}
\item $S$ is a content algebra over $R$.
\item $S$ is a semicontent algebra over $R$.
\item $S$ is faithfully flat over $R$ and a weak content algebra over $R$.
\item $S$ is flat over $R$ and $\p S \in \Spec S$ for all $\p \in \Spec R$.
\end{enumerate}
\end{thm}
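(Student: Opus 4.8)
The plan is to prove the cyclic chain $(1)\Rightarrow(2)\Rightarrow(3)\Rightarrow(4)\Rightarrow(1)$, making repeated use of the fact that an Artinian ring is Noetherian (so Corollary~\ref{weaksemi} and Proposition~\ref{pr:ArtOR} apply) and is a finite product of Artinian local rings. The implications $(1)\Rightarrow(2)$ and $(2)\Rightarrow(3)$ are immediate: by the chain ``(faithfully flat) Gaussian $\Rightarrow$ content $\Rightarrow$ semicontent $\Rightarrow$ weak content'' recalled in \S\ref{sec:basics}, a content algebra is semicontent and a semicontent algebra is weak content, while ``content'' and ``semicontent'' already include faithful flatness (hence the Ohm-Rush property) by definition. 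For $(3)\Rightarrow(4)$: faithful flatness gives flatness, and Proposition~\ref{pr:omnibus}(\ref{it:Ruprimes}) says that for each $\p\in\Spec R$ either $\p S=S$ or $\p S\in\Spec S$; the first alternative is excluded because faithful flatness makes $R\subset S$ pure, so $\p S\cap R=\p\neq R$ and hence $1\notin\p S$.

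The substantive step is $(4)\Rightarrow(1)$. Assume $S$ is flat over $R$ and $\p S\in\Spec S$ for every $\p\in\Spec R$. Since $R$ is Artinian, $S$ is Ohm-Rush by Proposition~\ref{pr:ArtOR}. Moreover $S$ is faithfully flat: by Proposition~\ref{pr:omnibus}(\ref{it:ORflat}) it suffices that no maximal ideal $\m$ of $R$ contain every ideal $\orc(x)$ with $x\in S$; but if $\m$ did, then $x\in\orc(x)S\subseteq\m S$ for all $x$, forcing $\m S=S$, which contradicts $\m S\in\Spec S$. Now Proposition~\ref{pr:omnibus}(\ref{it:Ruprimes}), read in the reverse direction, shows $S$ is a weak content algebra, and since $R$ is Noetherian, Corollary~\ref{weaksemi} upgrades this to: $S$ is semicontent over $R$. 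It remains to promote semicontent to content. By Proposition~\ref{directsum} (and the fact that flatness, the prime condition of (4), and the content and semicontent properties are all coordinatewise with respect to a finite product decomposition of the base), I may assume $(R,\m)$ is Artinian local, in which case the content function of $R\subset S$ is unital by Proposition~\ref{localsemicontent}. Choose $k$ with $\m^k=0$. Given $f,g\in S$: if $\orc(f)=R$, then $\orc(fg)=\orc(g)$ by unitality, so the content formula holds with $n=1$; otherwise $\orc(f)\subseteq\m$, so $\orc(f)^k=0$, and since $\orc(fg)\subseteq\orc(f)\orc(g)$ by Proposition~\ref{pr:omnibus}(\ref{it:Rumult}) we get $\orc(f)^{k-1}\orc(fg)\subseteq\orc(f)^k\orc(g)=0$, so the content formula holds with $n=k$, both sides vanishing. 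Hence $S$ is a content $R$-algebra.

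I expect the main obstacle to lie entirely in $(4)\Rightarrow(1)$: its hypothesis involves only $\Spec$, and the real work is to extract from it the far stronger semicontent property, which Corollary~\ref{weaksemi} supplies once faithful flatness and the Ohm-Rush property are secured (these resting in turn on Proposition~\ref{pr:ArtOR}, hence on $R$ being Artinian). After that, the content formula over an Artinian local ring is nearly automatic, since $\orc(f)$ is then either the unit ideal (handled by unitality) or a nilpotent ideal of exponent at most $k$ (handled by $\m^k=0$). The one point that still needs genuine care is checking that the reduction to the Artinian local case via Proposition~\ref{directsum} respects all four conditions, in particular the $\Spec$-theoretic condition $(4)$.
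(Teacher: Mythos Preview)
Your proof is correct and follows essentially the same approach as the paper: both reduce the nontrivial direction to the Artinian local case via Proposition~\ref{directsum}, use Proposition~\ref{pr:ArtOR} and Corollary~\ref{weaksemi} to get semicontent, and then split into the cases $\orc(f)=R$ (handled by unitality) and $\orc(f)\subseteq\m$ (handled by nilpotence). The only cosmetic difference is that the paper takes $n$ with $\orc(f)^n=0$ and observes $\orc(f)^{n+1}\orc(g)=\orc(f)^n\orc(fg)=0$ directly, whereas you invoke Proposition~\ref{pr:omnibus}(\ref{it:Rumult}) to make both sides vanish with exponent $k$; either works.
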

\begin{proof} We already know that (1) $\Rightarrow$ (2) $\Rightarrow$ (3).  (3) $\Rightarrow$ (4) by Proposition~\ref{pr:omnibus}(\ref{it:Ruprimes}).   Since $R$ is Artinian, (4) $\Rightarrow$ (3) by Proposition~\ref{pr:ArtOR} and Proposition~\ref{pr:omnibus}(\ref{it:Ruprimes}), and because prime ideals do not extend to the whole ring.  Since $R$ is Noetherian, (3) $\Rightarrow$ (2) by Corollary~\ref{weaksemi}.  For the remaining implication (2) $\Rightarrow$ (1), since an Artinian ring is a finite direct product of local Artinian rings,  we may assume that $R$ is local with maximal ideal $\m$  by Proposition~\ref{directsum}.

Let $f,g \in S$.  If $\orc(f) \subseteq \m$, then for some $n$ we have $\orc(f)^n =0$, so that $\orc(f)^{n+1} \orc(g) = \orc(f)^n \orc(fg)$ automatically.  Otherwise $\orc(f)=R$.   In this case, the Dedekind-Mertens equation reduces to  the equation $\orc(fg) = \orc(g)$.  However, this is just the semicontent condition, which is what we are assuming.
\end{proof}

\begin{example}\label{ex:Art}
It is natural at this point to ask whether over an Artinian ring, content algebras are the same as Gaussian algebras.  In general, they are not.

For instance, let $R=k[a,b] / (a^2,b^2)$, where $k$ is any field and $a, b$ are indeterminates, and let $S=R[x]$. Then $S$ is a content $R$-algebra because it is a polynomial extension. However, $\orc(ax+b)\orc(ax-b)= (a,b)^2 = abR \neq 0$, but $\orc((ax+b)(ax-b)) = \orc(a^2 x^2 - b^2) = \orc(0)=0$, so $S$ is \emph{not} a Gaussian $R$-algebra.
\end{example}

\section{Extensions of valuation rings}\label{sec:valbase}

The general question we pose in this section is given a valuation ring $V$ and a $V$-algebra $S$, when is $S$ a content algebra over $V$?  We first note some basic facts.  As long as $S$ is a domain, it is torsion free as a $V$-module, whence it is a priori flat over $V$.  More generally, we recall the following result, which is \cite[Proposition 4.7]{nmeSh-OR}:

\begin{prop}\label{pr:ES47} 
Let $R$ be a Pr\"ufer domain, and let $S$ be a faithfully flat Ohm-Rush $R$-algebra.  The following are equivalent: \begin{enumerate}
\item $S$ is weak content over $R$.
\item $S$ is semicontent over $R$.
\item $S$ is content over $R$.
\item $S$ is Gaussian over $R$.
\item For all maximal ideals $\m$ of $R$, we have $\m S \in \Spec S$.
\end{enumerate}
\end{prop}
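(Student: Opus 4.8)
The plan is to complete the cycle of implications. The chain $(4)\Rightarrow(3)\Rightarrow(2)\Rightarrow(1)$ is exactly the general implication recalled in \S\ref{sec:basics}, so only $(1)\Rightarrow(5)$ and $(5)\Rightarrow(4)$ remain. The first is immediate: if $S$ is weak content then Proposition~\ref{pr:omnibus}(\ref{it:Ruprimes}) gives, for each $\m\in\Max(R)$, that $\m S=S$ or $\m S\in\Spec S$, and $\m S=S$ is impossible since faithful flatness forces $\m S\cap R=\m\subsetneq R$ by purity.

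For $(5)\Rightarrow(4)$ I would first reduce to a valuation base. The Gaussian property is local by Proposition~\ref{localGauss}, every localization of a Pr\"ufer domain at a maximal ideal is a valuation domain, and hypothesis $(5)$ localizes: for $\m\in\Max(R)$, purity gives $\m S\cap R=\m$, which is disjoint from $R\setminus\m$, so the prime $\m S$ of $S$ extends to a prime of $S_{R\setminus\m}$, namely the extension of the maximal ideal of $R_\m$; and $R_\m\subseteq S_{R\setminus\m}$ is again faithfully flat and Ohm-Rush by Proposition~\ref{pr:omnibus}(\ref{it:OR31}). So assume $R=V$ is a valuation domain with $\m_V S\in\Spec S$. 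The crux is the observation that \emph{the content of every nonzero $f\in S$ is principal}: we have $f\in\orc(f)S$ by the Ohm-Rush property, and since $V$ is a valuation domain any finite combination $\sum_i d_is_i$ with $d_i\in\orc(f)$ is a $V$-multiple of the $d_i$ of smallest value, so $\orc(f)S=\bigcup\{\,dS:d\in\orc(f)\setminus\{0\}\,\}$; thus $f=ds$ for some $s\in S$ and some $0\neq d\in\orc(f)$, and flatness (Proposition~\ref{pr:omnibus}(\ref{it:ORflat})) gives $\orc(f)=d\,\orc(s)\subseteq dV\subseteq\orc(f)$, i.e.\ $\orc(f)=dV$.

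Granting this, the Gaussian identity follows quickly. Let $f,g\in S$; the cases $f=0$ or $g=0$ are trivial, so take both nonzero and write $\orc(f)=dV$, $\orc(g)=eV$ with $d,e\neq0$. From $f\in\orc(f)S=dS$ and flatness we get $f=df'$ with $\orc(f')=V$, and likewise $g=eg'$ with $\orc(g')=V$; in particular $f',g'\notin\m_V S$. Since $\m_V S$ is prime, $f'g'\notin\m_V S$, so $\orc(f'g')$ is an ideal of the local ring $V$ not contained in $\m_V$, hence $\orc(f'g')=V$. Then flatness gives $\orc(fg)=\orc\big(de\cdot f'g'\big)=de\,\orc(f'g')=deV=\orc(f)\orc(g)$, so $S$ is Gaussian over $R$ and all five conditions are equivalent.

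I expect the only genuinely substantive point to be the principality of contents over a valuation domain; once that is isolated, the reduction to the local case and the final computation are routine. It is worth noting that this principality is also the obstruction behind the failure of $R\to R[[x]]$ to be Ohm-Rush over an arbitrary valuation ring $R$ (mentioned in the introduction): a power series whose coefficient values strictly decrease to an infimum that is never attained can have no principal, hence no, content.
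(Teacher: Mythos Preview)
The paper does not actually prove this proposition here; it is recalled verbatim from \cite[Proposition 4.7]{nmeSh-OR}, so there is no in-paper argument to compare yours against. That said, your proof is correct and self-contained. The chain $(4)\Rightarrow(3)\Rightarrow(2)\Rightarrow(1)\Rightarrow(5)$ is handled exactly as it should be, and for $(5)\Rightarrow(4)$ your reduction to a valuation base via Proposition~\ref{localGauss} is sound (the hypothesis on $\m S$ does localize as you describe, by purity and Proposition~\ref{pr:omnibus}(\ref{it:OR31})). The one substantive observation --- that over a valuation domain $V$ every nonzero element of a flat Ohm-Rush $V$-algebra has \emph{principal} content --- is argued correctly: from $f\in\orc(f)S$ the total order on principal ideals of $V$ gives $f\in dS$ for some nonzero $d\in\orc(f)$, and then $\orc(f)\subseteq dV\subseteq\orc(f)$ forces $\orc(f)=dV$. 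The Gaussian identity then drops out immediately from primeness of $\m_V S$ and Proposition~\ref{pr:omnibus}(\ref{it:ORflat}), just as you wrote. Your closing remark about $R\to R[\![x]\!]$ is an apt aside but not part of the proof proper.
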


For this reason, when $R$ is a Pr\"ufer (e.g. valuation) domain and $S$ is a faithfully flat Ohm-Rush $R$-algebra, it does not matter whether we speak of Gaussian algebras, content algebras, semicontent algebras, weak content algebras, etc.  They all coincide.  In the sequel we consistently use the term ``content algebra'' only because it is the (chronologically) oldest such term.

 It will be convenient  to introduce some notation that will simplify our discussion.   For a ring $V$, let
$\mathfrak{K}_V:=\{ I$ an ideal of $V$: $I$ is the intersection of all the ideals that properly contain $I \}$. We will suppress the subscript if there is no chance for confusion. For each $I$ an ideal of $V$, let $\mathcal{K}_I:= \{ J\subset V : J$ properly contains $I\}$.   Hence $I\in\mathfrak{K}\Leftrightarrow I=\bigcap_{J\in \mathcal{K}_I} J$.  Finally, when $S$ is a $V$-algebra and $a\in S$, we set $\mathcal{L}_a:= \{I\subset V: a\in IS\}$.   Hence $S$ is an Ohm-Rush algebra over $V$ if and only if for all $a\in S, \orc(a) \in \mathcal{L}_a$ (i.e. $\mathcal{L}_a$ has a least element).

\begin{prop}\label{pr:valOR}
Let $V \subset S$, where $V$ is a valuation ring and $S$ is integral domain.  Then $S$ is an Ohm-Rush algebra over $V$ if and only for every $I\in \mathfrak{K}$, $IS =\bigcap_{J\in \mathcal{K}_I} JS$.

\end{prop}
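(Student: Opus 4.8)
The plan is to unwind the two definitions against the single structural fact that, $V$ being a valuation ring, its ideals form a chain under inclusion. Consequently, for a fixed $a\in S$ the set $\mathcal{L}_a$ of ideals $I$ with $a\in IS$ is an up-set in this chain (if $I\subseteq J$ and $a\in IS$ then $a\in JS$), and the Ohm-Rush property for $a$ is exactly the statement that this up-set has a least element, namely $\orc(a)=\bigcap\mathcal{L}_a$. Likewise $\mathcal{K}_I$ is an up-set, and the membership $I\in\mathfrak{K}$ says precisely that the up-set $\mathcal{K}_I$ (the ideals \emph{strictly} above $I$) has intersection equal to $I$. The whole argument is a comparison of these up-sets.

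For the forward implication, assume $S$ is Ohm-Rush over $V$ and fix $I\in\mathfrak{K}$. The inclusion $IS\subseteq\bigcap_{J\in\mathcal{K}_I}JS$ is trivial since $I\subseteq J$ for each such $J$. For the reverse, take $a\in\bigcap_{J\in\mathcal{K}_I}JS$; then $a\in JS$ for all $J\in\mathcal{K}_I$, i.e.\ $\mathcal{K}_I\subseteq\mathcal{L}_a$, hence $\orc(a)=\bigcap\mathcal{L}_a\subseteq\bigcap\mathcal{K}_I=I$, the last equality being the defining property of $I\in\mathfrak{K}$. Now invoke Ohm-Rushness exactly once: $a\in\orc(a)S\subseteq IS$. (Note this direction does not even use that $V$ is a valuation ring.)

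For the converse, assume $IS=\bigcap_{J\in\mathcal{K}_I}JS$ for every $I\in\mathfrak{K}$, fix $a\in S$, and set $\mathfrak{c}:=\orc(a)=\bigcap_{I\in\mathcal{L}_a}I$; the goal is $\mathfrak{c}\in\mathcal{L}_a$. If $\mathfrak{c}\in\mathcal{L}_a$ we are done, so suppose not. Then each $I\in\mathcal{L}_a$ contains $\mathfrak{c}$ and differs from it, so $I\supsetneq\mathfrak{c}$; thus $\mathcal{L}_a\subseteq\mathcal{K}_{\mathfrak{c}}$, which forces $\bigcap\mathcal{K}_{\mathfrak{c}}\subseteq\bigcap\mathcal{L}_a=\mathfrak{c}$ and hence (with the trivial reverse containment) $\mathfrak{c}\in\mathfrak{K}$. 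Next I would check $\mathcal{K}_{\mathfrak{c}}\subseteq\mathcal{L}_a$: given an ideal $J\supsetneq\mathfrak{c}$, if $J$ contained no member of $\mathcal{L}_a$ then, using that the ideals of $V$ form a chain, $J\subsetneq I$ for every $I\in\mathcal{L}_a$, so $J\subseteq\bigcap\mathcal{L}_a=\mathfrak{c}$, a contradiction; hence $I\subseteq J$ for some $I\in\mathcal{L}_a$ and $a\in IS\subseteq JS$, i.e.\ $J\in\mathcal{L}_a$. Finally, applying the hypothesis to $\mathfrak{c}\in\mathfrak{K}$ gives $\mathfrak{c}S=\bigcap_{J\in\mathcal{K}_{\mathfrak{c}}}JS$, and since $a\in JS$ for every $J\in\mathcal{K}_{\mathfrak{c}}$ we conclude $a\in\mathfrak{c}S$, as required.

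The only step with any real content is the dichotomy in the converse: if $\orc(a)$ itself does not already witness the content of $a$, it is \emph{automatically} forced into $\mathfrak{K}$, and moreover the ideals lying strictly above it are exactly the members of $\mathcal{L}_a$. The chain condition on the ideals of $V$ is what makes this go through cleanly — it is precisely what legitimizes the implication ``$J$ meets no element of $\mathcal{L}_a$ $\Rightarrow$ $J$ lies below all of them''. Everything else is routine manipulation of intersections of nested families of ideals, so I expect no further obstacle.
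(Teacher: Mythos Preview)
Your proof is correct and follows essentially the same route as the paper's. The only cosmetic differences are that the paper dispatches the forward implication by invoking the general characterization $\bigl(\bigcap_\alpha I_\alpha\bigr)S=\bigcap_\alpha I_\alpha S$ of Ohm-Rush modules rather than arguing element-wise, and in the converse the paper splits on whether $\orc(a)\in\mathfrak{K}$ while you split on whether $\orc(a)\in\mathcal{L}_a$; these are contrapositives of the same observation, and the use of the chain condition on ideals of $V$ occurs at the identical spot in both arguments.
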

\begin{proof}
First assume that  for each $I\in \frk, IS =\bigcap_{J\in \mathcal{K}_I} JS$.   We want to show that if $a\in S$, then $\orc(a)\in \mathcal{L}_a$.  We divide into two cases.  First suppose that $\orc(a) \not\in \frk$.  Thus $\orc(a)$ is not the intersection of the ideals that strictly contain it.   Hence by definition, $\orc(a) \in \mathcal{L}_a$.

In the second case, $\orc(a) \in \frk$, so $\orc(a) = \bigcap_{J\in \mathcal{K}_{\orc(a)}} J$.   Furthermore by assumption
 $$\orc(a)S = \Bigg(\bigcap_{J\in \mathcal{K}_{\orc(a)}} J\Bigg)S = \bigcap_{J\in \mathcal{K}_{\orc(a)}} (JS).$$
   Now suppose that $a\not\in \orc(a)S$.   Then $a\not\in JS$ for some $J \in \mathcal{K}_{\orc(a)}$.   As $V$ is a valuation ring, the ideals of $V$ are linearly ordered.  Thus  all $K\in \mathcal{L}_a$ contain this $J$.   In particular $\bigcap_{K\in \mathcal{L}_a}K \supseteq J \supset \orc(a)$, a contradiction.

   The converse follows immediately from Proposition~\ref{pr:omnibus}(\ref{it:OR12ii}).
\end{proof}

We next show that heights are preserved in content extensions of Pr\"ufer domains.

\begin{lemma}\label{lem:extended}
Let $R$ be a B\'ezout domain (e.g. a valuation ring) and let $S$ be a flat Ohm-Rush algebra over $R$. Let $J$ be an ideal of $R$, let $Q$ a prime ideal of $S$, and suppose $Q \subseteq JS$.  Then $Q$ is an extended ideal.  That is, $Q = (Q \cap R)S$.
\end{lemma}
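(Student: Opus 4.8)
The plan is to establish the two inclusions $(Q\cap R)S\subseteq Q\subseteq (Q\cap R)S$. Writing $\p:=Q\cap R$, the first is immediate from $\p\subseteq Q$. For the second, it suffices to show that every $f\in Q$ lies in $\p S$; I may assume $f\neq 0$, and I note at the outset that the only reasonable reading has $J$ a \emph{proper} ideal (if $JS=S$ the hypothesis is vacuous and the conclusion genuinely fails, e.g. for $(t,x)\subseteq k[[t]][x]$), so I take $J\subsetneq R$.

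The key step is to reduce the content of $f$ to a principal ideal and factor it out. Since $S$ is Ohm-Rush, $f\in\orc(f)S$, so write $f=\sum_{i=1}^n a_i s_i$ with $a_i\in\orc(f)$ and $s_i\in S$. As $R$ is B\'ezout, $(a_1,\dots,a_n)=(a)$ for some $a\in R$. Then $a\in(a_1,\dots,a_n)\subseteq\orc(f)$ gives $(a)\subseteq\orc(f)$, while $f\in(a_1,\dots,a_n)S=(a)S$ gives $\orc(f)\subseteq(a)$ by minimality of the content; hence $\orc(f)=(a)$. Since $f\in(a)S$, write $f=ag$ with $g\in S$. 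By Proposition~\ref{pr:omnibus}(\ref{it:ORflat}) (flat Ohm-Rush yields $\orc(rx)=r\,\orc(x)$) we get $(a)=\orc(f)=a\,\orc(g)$; because $R$ is a domain and $a\neq 0$ (as $f\neq 0$ forces $\orc(f)\neq 0$), this forces $\orc(g)=R$.

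Now invoke primeness of $Q$: from $f=ag\in Q$ we get $g\in Q$ or $a\in Q$. The case $g\in Q$ is impossible, since then $g\in Q\subseteq JS$ would force $\orc(g)\subseteq J\subsetneq R$, contradicting $\orc(g)=R$. Therefore $a\in Q\cap R=\p$, whence $\orc(f)=(a)\subseteq\p$ and $f\in\orc(f)S\subseteq\p S$, as needed. This gives $Q\subseteq\p S$ and completes the proof.

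The one genuinely delicate point is the factorization $f=ag$ with $\orc(g)=R$: once B\'ezoutness is used to make $\orc(f)$ principal, the flat Ohm-Rush identity $\orc(ag)=a\,\orc(g)$ does the work, and the primeness of $Q$ together with $\orc(g)=R$ (which cannot sit inside the proper ideal $J$) forces the right factor into $\p$. Everything else is routine bookkeeping, and the sole hypothesis the argument cannot dispense with is the properness of $J$.
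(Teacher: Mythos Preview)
Your proof is correct and follows essentially the same route as the paper's: factor $f$ as (generator of $\orc(f)$)$\cdot$(element of unit content), then use primeness of $Q$ and the containment $Q\subseteq JS$ to force the scalar factor into $Q\cap R$. The only differences are cosmetic---the paper invokes the standard fact that Ohm-Rush content ideals are finitely generated and then uses B\'ezoutness, whereas you prove principality of $\orc(f)$ directly; and you make explicit the needed hypothesis that $J$ is proper, which the paper's proof uses silently (at the step ``$\orc(s)=R$ implies $s\notin JS$'') and which, as your counterexample shows, the statement genuinely requires.
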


\begin{proof}
Let $g\in Q$.  Since $R$ is B\'ezout and content ideals are finitely generated, $\orc(g)$ is principal.  That is, $\orc(g) = rR$ for some $r\in R$.  Since $g\in \orc(g)S$, it follows that there is some $s\in S$ with $g=rs$.  We have \[
rR = \orc(g) = \orc(rs) =r\orc(s),
\]
with the last equality following from Proposition~\ref{pr:omnibus}(\ref{it:ORflat}).  Then by cancellation, $R=\orc(s)$.  In particular (again by the Ohm-Rush property), it follows that $s\notin JS$, so $s\notin Q$.  But $g=rs\in Q$, so $r\in Q \cap R$.  Therefore, $g=rs \in (Q\cap R)S$.
\end{proof}

\begin{thm}\label{thm:height}
Let $R \rightarrow S$ be a content algebra map, where $R$ is a Pr\"ufer domain.  Then for any $\p \in \Spec R$, $\hgt \p = \hgt \p S$.
\end{thm}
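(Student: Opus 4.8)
The plan is to establish the two inequalities $\hgt \p S \ge \hgt \p$ and $\hgt \p S \le \hgt \p$ separately. Note first that $\p S$ is genuinely prime, since a content algebra is in particular weak content and so $\p S \in \Spec S$ by Proposition~\ref{pr:omnibus}(\ref{it:Ruprimes}), and that $\p S \cap R = \p$ by purity; thus the comparison is across an honest lying-over.

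For $\hgt \p S \ge \hgt \p$ I would use only flatness: since $R \to S$ is flat, going-down holds (see, e.g., \cite[Theorem 9.5]{Mats}). Given a chain $\p_0 \subsetneq \cdots \subsetneq \p_n = \p$ in $\Spec R$, start from $P_n := \p S$, which lies over $\p_n$, and apply going-down repeatedly to obtain primes $P_0 \subsetneq \cdots \subsetneq P_n$ of $S$ with $P_i \cap R = \p_i$; strictness downstairs is forced by strictness of the contractions. Hence $\hgt \p S \ge \hgt \p$.

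For $\hgt \p S \le \hgt \p$ the idea is to reduce to a valuation base so that Lemma~\ref{lem:extended} becomes available, and then observe that ``every prime below $\p S$ is extended'' makes contraction to the base strictly order-preserving. Concretely, take any chain $Q_0 \subsetneq \cdots \subsetneq Q_n = \p S$ in $\Spec S$. Each $Q_i \subseteq \p S$ contracts into $\p$, hence survives in $S_{R \setminus \p}$, yielding a chain of the same length in $\Spec S_{R\setminus\p}$ contained in $(\p S)_{R\setminus\p} = (\p R_\p) S_{R\setminus\p}$. Now $R_\p$ is a valuation ring, hence B\'ezout, and $S_{R\setminus\p}$ is a flat Ohm-Rush $R_\p$-algebra by Proposition~\ref{pr:omnibus}(\ref{it:OR31}), so Lemma~\ref{lem:extended} (with $J := \p R_\p$) shows that each $(Q_i)_{R\setminus\p}$ is extended from $R_\p$, say $(Q_i)_{R\setminus\p} = \p_i S_{R\setminus\p}$ with $\p_i := (Q_i)_{R\setminus\p} \cap R_\p$. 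Since the $(Q_i)_{R\setminus\p}$ are strictly increasing, consecutive $\p_i$ must differ, so $\p_0 \subsetneq \cdots \subsetneq \p_n = \p R_\p$ is a strict chain in $R_\p$; hence $n \le \hgt \p R_\p = \hgt \p$. Taking the supremum over all such chains gives $\hgt \p S \le \hgt \p$.

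I expect the second inequality to carry all the content, with the localization step to a valuation (hence B\'ezout) ring as the key move; everything after the application of Lemma~\ref{lem:extended} is formal book-keeping, and the content hypothesis is needed only insofar as it guarantees $\p S \in \Spec S$ and that flat Ohm-Rushness passes to the localization.
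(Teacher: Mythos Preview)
Your proof is correct and follows essentially the same approach as the paper: both arguments localize at $\p$ to reduce to a valuation base and then invoke Lemma~\ref{lem:extended} to show that every prime below $\p S$ is extended, forcing contraction to be strictly order-preserving. The only cosmetic differences are that the paper localizes at the outset (citing Proposition~\ref{pr:omnibus}(\ref{it:OR62})) and handles $\hgt\p\le\hgt\p S$ by directly extending a chain via ``primes extend to distinct primes,'' whereas you localize only for the harder inequality and use going-down for the easier one.
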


\begin{proof}
Since $\hgt \p R_\p = \hgt \p$, $\hgt \p S_{R \setminus \p} = \hgt \p S$, and $R_\p \rightarrow S_{R \setminus \p}$ is a content algebra map (by Proposition~\ref{pr:omnibus}(\ref{it:OR62})), we may assume that $R$ is a valuation ring.

Since $S$ is faithfully flat over $R$ and prime ideals extend to primes of $S$, we have that \emph{distinct} prime ideals of $R$ extend to distinct primes of $S$.  Hence any chain in $R$ extends to a chain in $S$ of the same length, which then implies that $\hgt \p \leq \hgt \p S$.

Conversely, let $0=Q_0 \subsetneq \cdots \subsetneq Q_t = \p S$ be a chain of prime ideals in $S$.  Then by Lemma~\ref{lem:extended}, each $Q_j = \p_j S$ for some prime ideal $\p_j \in \Spec R$.  In particular, we have $0=\q_0 \subsetneq \cdots \subsetneq \q_t = \p$, so that $\hgt \p \geq t$.  Since the chain in $\Spec S$ under $\p S$ was arbitrary, it follows that $\hgt \p \geq \hgt \p S$.  Thus $\hgt \p S = \hgt \p$.
\end{proof}

\begin{cor}\label{cor:dimcatenary}
Let $R \rightarrow S$ be a content algebra map, where $R$ is a Pr\"ufer domain and $S$ is catenary.  Then for any $P \in \Spec S$, we have \[
\dim S_P = \dim R_\p + \dim S_P / \p S_P,
\]
where $\p = P \cap R$.
\end{cor}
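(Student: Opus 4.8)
The plan is to deduce the dimension formula directly from Theorem~\ref{thm:height}. Writing everything in terms of heights, we have $\dim S_P = \hgt P$, $\dim R_\p = \hgt \p$, and $\dim S_P/\p S_P = \hgt(P/\p S)$ (the last computed in $S/\p S$). Using Theorem~\ref{thm:height} to rewrite $\hgt \p$ as $\hgt(\p S)$, the assertion becomes the single height identity
\[
\hgt P = \hgt(\p S) + \hgt(P/\p S), \qquad \p := P \cap R .
\]
So the real content is to establish this identity, and for that I would use the catenary hypothesis on $S$ together with the observation that $S$ must be a domain.

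First I would record that $S$ is an integral domain. A content algebra is in particular a weak content algebra, so for $a,b \in S$ with $ab=0$ we get $\sqrt{\orc(a)\orc(b)} = \sqrt{\orc(ab)} = \sqrt{\orc(0)} = (0)$, since $\orc(0)=(0)$ and $R$ is a domain; hence $\orc(a)\orc(b)=(0)$, and as $R$ is a domain one factor, say $\orc(a)$, vanishes. Then $a \in \orc(a)S = (0)$ by the Ohm-Rush property, so $a=0$; thus $(0)$ is prime in $S$. (Alternatively one could get the unique minimal prime from Lemma~\ref{lem:extended} plus going-down for the flat map, but the weak-content computation is cleaner.)

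Next, $\p S$ is a prime of $S$: it is proper because $S$ is faithfully flat over $R$, and then Proposition~\ref{pr:omnibus}(\ref{it:Ruprimes}) applies since $S$ is weak content. Now $(0) \subseteq \p S \subseteq P$ is a chain of primes in the catenary domain $S$. Because $S$ is a domain, $(0)$ is its unique minimal prime, so $\hgt P$ is the common length of all saturated chains from $(0)$ to $P$, and likewise $\hgt(\p S)$ and (working in $S/\p S$) $\hgt(P/\p S)$ are the common lengths of the saturated chains from $(0)$ to $\p S$ and from $\p S$ to $P$; concatenating such chains gives a saturated chain from $(0)$ to $P$, yielding the desired identity. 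Applying Theorem~\ref{thm:height} then finishes the proof.

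The only step needing care is this last one: one must use that ``catenary'' guarantees that all saturated chains between a fixed pair of comparable primes have the same finite length, and that the domain property forces every maximal chain of primes below $P$ to start at $(0)$, hence to pass through a saturated subchain up to $\p S$. Without the domain property the minimal prime realizing $\hgt P$ need not lie below $\p S$ and additivity of heights could fail. Everything else is routine bookkeeping identifying heights in $S$ and in $S/\p S$ with the dimensions of the localizations in the statement.
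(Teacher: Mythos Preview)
Your proposal is correct and follows essentially the same route as the paper: reduce to the height identity $\hgt P = \hgt(\p S) + \hgt(P/\p S)$ via the chain $(0)\subseteq \p S \subseteq P$ of primes in the catenary ring $S$, and then invoke Theorem~\ref{thm:height}. The paper's proof simply asserts that $0S$ and $\p S$ are prime and that catenarity gives the additivity, whereas you supply the justifications (the weak-content argument that $S$ is a domain, and Proposition~\ref{pr:omnibus}(\ref{it:Ruprimes}) for $\p S$ prime); this extra care is warranted but does not constitute a different approach.
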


\begin{proof}
Since $0S \subseteq \p S \subseteq P$ are prime ideals of $S$, the catenarity assumption implies that \[
\hgt P = \hgt \p S + \hgt (P/\p S).
\]
But by Theorem~\ref{thm:height}, $\hgt \p = \hgt \p S$.  Thus, \[
\dim S_P = \hgt P = \hgt \p S + \hgt P/\p S = \hgt \p +\hgt P/\p S = \dim R_\p + \dim S_P / \p S_P.
\]
\end{proof}

\begin{rem}
The above corollary is an analogue of what happens with arbitrary flat maps between Noetherian rings \cite[Theorem 15.1 (ii)]{Mats}.  It is also a generalization of what happens with a \emph{polynomial} extension of a finite rank valuation domain, by Seidenberg's theorem \cite[Theorem 4]{Sei-dim2}, since Pr\"ufer domains locally of finite dimension are universally catenary \cite[Corollary 3.9]{MaMo-strongS}.
\end{rem}

Without the catenarity assumption, we get the following partial result.

\begin{prop}
Let $V \rightarrow S$ be a content algebra map, where $V$ is a valuation domain, and where both $S$ and $V$ are finite-dimensional.  Let $0 = \p_0 \subsetneq \cdots \subsetneq \p_d$ be the spectrum of $V$.  For each $0 \leq i \leq d$, set $P_i := \p_iS$, and set $t_i := \max\{1, \dim(S/P_i)_{V \setminus \p_i}\}$.  Then $\dim S \leq \sum_{i=0}^d t_i$.
\end{prop}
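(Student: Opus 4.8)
The plan is to bound the length $n$ of an arbitrary finite chain $Q_0 \subsetneq Q_1 \subsetneq \cdots \subsetneq Q_n$ of prime ideals of $S$ by $\sum_{i=0}^d t_i$; taking the supremum over all such chains then yields $\dim S \le \sum_{i=0}^d t_i$. First I would record the structural input. Since $S$ is a content algebra over $V$ it is faithfully flat, Ohm-Rush, and prime ideals of $V$ extend to prime ideals of $S$, so each $P_i = \p_iS$ is prime, $S/P_i$ is a domain, and $P_i \cap V = \p_i$ by purity; and since $V$ is a valuation domain its spectrum is the chain $\p_0 \subsetneq \cdots \subsetneq \p_d$, so every prime $Q$ of $S$ contracts to a unique $\p_{\nu(Q)}$, and $\nu$ is order-preserving. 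Along our chain this gives $\nu(Q_0) \le \nu(Q_1) \le \cdots \le \nu(Q_n)$. Note also that the primes $Q$ of $S$ with $\nu(Q) = i$ correspond order-preservingly to the primes of $(S/P_i)_{V\setminus\p_i}$, with $P_i$ corresponding to the zero ideal of that (domain) ring.

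The crucial tool is Lemma~\ref{lem:extended}: since $V$ is B\'ezout, any prime $Q$ of $S$ contained in some $\p_iS$ is \emph{extended}, i.e.\ $Q = (Q\cap V)S = P_{\nu(Q)}$. I would call $Q_j$ extended if $Q_j = P_{\nu(Q_j)}$, and observe that extendedness is downward closed along the chain: if $Q_j = \p_{\nu(Q_j)}S$ and $j' < j$, then $Q_{j'} \subseteq \p_{\nu(Q_j)}S$, hence $Q_{j'}$ is extended. So there is a threshold $j_0$ with $Q_0, \dots, Q_{j_0-1}$ extended and $Q_{j_0}, \dots, Q_n$ not (allowing $j_0 = n+1$).

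Now I would split $n+1 = j_0 + (n+1-j_0)$ and bound each part. For the extended part, the $Q_j = P_{\nu(Q_j)}$ with $j < j_0$ are distinct, so their indices $\nu(Q_0) < \cdots < \nu(Q_{j_0-1})$ are strictly increasing elements of $\{0, \dots, d\}$; writing $e := \nu(Q_{j_0-1})$, this forces $j_0 \le e+1$ (equivalently one may quote $\hgt P_e = \hgt \p_e = e$ from Theorem~\ref{thm:height}). For the non-extended part, I group those $Q_j$ by their contraction index: for a fixed value $i$, the $c_i$ such primes strictly contain $P_i$ (being non-extended) and form a chain, so in $(S/P_i)_{V\setminus\p_i}$ they give a chain of length $c_i$ above the zero ideal, whence $c_i \le \dim (S/P_i)_{V\setminus\p_i} \le t_i$; moreover each such $i$ satisfies $i \ge \nu(Q_{j_0}) \ge e$, so $n+1-j_0 = \sum_i c_i \le \sum_{i=e}^d t_i$. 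Combining, $n+1 \le (e+1) + \sum_{i=e}^d t_i$, and since $t_0, \dots, t_{e-1} \ge 1$ we get $\sum_{i=0}^d t_i + 1 \ge (e+1) + \sum_{i=e}^d t_i \ge n+1$, i.e.\ $n \le \sum_{i=0}^d t_i$; the degenerate case $j_0 = 0$ is immediate, since then $n+1 = \sum_i c_i \le \sum_{i=0}^d t_i$.

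The step I expect to need the most care is the bookkeeping at the interface index $e$: one must check that the single extended prime $P_e = Q_{j_0-1}$ (absorbed into the estimate $j_0 \le e+1$) and the non-extended primes contracting to $\p_e$ (absorbed into $c_e \le t_e$) are charged against disjoint portions of the budget $\sum_i t_i$, and that the elementary slack $\sum_{i=0}^{e-1} t_i \ge e$ is exactly what reconciles the two halves of the estimate. Beyond this arithmetic, the only genuine ingredient is Lemma~\ref{lem:extended}, which is precisely what pins down the structure of the bottom of any prime chain in $S$.
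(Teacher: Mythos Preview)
Your argument is correct and rests on the same key input as the paper's proof, namely Lemma~\ref{lem:extended} (primes of $S$ contained in some $\p_iS$ are extended) together with the fiber description of the primes contracting to a fixed $\p_i$.  The paper, however, organizes the count differently: it assumes $Q_0=0$ and proceeds by induction on $d=\dim V$, splitting into the case where some nonzero $Q_j$ is extended (then $\hgt Q_j=i$ by Theorem~\ref{thm:height}, and the inductive hypothesis applied to $V/\p_i\to S/P_i$ bounds $\dim S/Q_j$ by $\sum_{\alpha=i}^d t_\alpha$) versus the case where none is (then one counts directly, exactly as you do for the non-extended block).  Your version replaces the induction by the single structural observation that extendedness is downward closed along the chain, which lets you locate the threshold $j_0$, bound the extended segment by $e+1$ in one stroke, and then handle the non-extended segment by the same fiber count the paper uses in its second case.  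This is marginally more elementary: in particular you never need to invoke that $V/\p_i\to S/P_i$ is again a content algebra map (Proposition~\ref{pr:factorrings}), which the paper's inductive step implicitly uses.  The ``interface'' bookkeeping you flag is indeed the only delicate point, and your inequality $\sum_{i=0}^{e-1}t_i\ge e$ (from $t_i\ge 1$) is precisely what absorbs the extra $+1$ coming from the extended prime $P_e$ sitting at level $e$.
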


\begin{proof}
We proceed by induction on $d$, the dimension of $V$.  When $d=0$, this reduces to the trivial statement that $\dim S \leq \dim S$.

Now suppose $d>0$ and that the statement holds for valuation domains of smaller dimension.  Let $Q_m \supsetneq Q_{m-1} \supsetneq \cdots \supsetneq Q_1 \supsetneq Q_0=0$ be a descending chain of prime ideals of $S$.  Now suppose some nonzero $Q_j$ is an extended prime ideal; say $Q_j = \p_iS$.  Then by Theorem~\ref{thm:height}, $\hgt Q_j = i$, and by induction, we have \[
m \leq \hgt Q_j + \dim S/Q_j = i + \dim S/Q_j \leq i + \sum_{\alpha=i}^d t_\alpha \leq \sum_{\alpha=0}^d t_\alpha.
\]
On the other hand, suppose no nonzero $Q_j$ is extended.  Then at most $t_0+1$ of the $Q_j$ contract to the zero ideal of $V$, and at most $t_i$ of the $Q_j$ contract to $\p_i$.  Since there are $m+1$ of the $Q_j$'s, we have $m+1 \leq 1+\sum_{i=0}^d t_i$, completing the proof.
\end{proof}

\section{Content algebras where both rings are valuation}\label{sec:valval}
\begin{lemma}\label{lem:nonprincipal}
Let $(V,\m)$ be a valuation ring such that $\m$ is not principal.  Then there is some nonzero $x\in \m$ such that $(x) = \bigcap \mathcal K_{(x)}$.
\end{lemma}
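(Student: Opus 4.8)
The plan is to prove the stronger assertion that $(x) = \bigcap \mathcal{K}_{(x)}$ holds for \emph{every} nonzero $x \in \m$; the lemma follows at once, since the hypothesis forces $\m \neq 0$. Fix such an $x$. Since the inclusion $(x) \subseteq \bigcap \mathcal{K}_{(x)}$ is trivial, I argue by contradiction and suppose there is $y \in \bigcap \mathcal{K}_{(x)}$ with $y \notin (x)$. First I record that $y \neq 0$ and, because $V$ is a valuation domain and $x \nmid y$, that $y \mid x$; hence $(x) \subsetneq (y)$, so $(y)$ is itself a member of $\mathcal{K}_{(x)}$. Since $y$ belongs to \emph{every} ideal properly containing $(x)$, we get $(y) \subseteq J$ for all $J \in \mathcal{K}_{(x)}$; thus $(y)$ is the least element of $\mathcal{K}_{(x)}$.

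The crux is then to produce an ideal strictly between $(x)$ and $(y)$, which will contradict this minimality. Writing $x = yz$ with $z \in V$ (possible since $y \mid x$), the fact that $(x) \neq (y)$ makes $z$ a nonunit, so $z \in \m$ and $z \neq 0$. The candidate is the ideal $y\m$. On one side, $x = yz \in y\m \subseteq (y)$, and this last containment is proper because $y\m = (y)$ would, after cancelling the nonzero $y$, force $\m = V$. On the other side $(x) \subseteq y\m$, and here is the only point where the hypothesis enters: if $(x) = y\m$, cancelling $y$ would give $(z) = \m$, contradicting that $\m$ is not principal. Hence $(x) \subsetneq y\m \subsetneq (y)$, so $y\m \in \mathcal{K}_{(x)}$ is strictly smaller than $(y)$ --- the desired contradiction. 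Therefore no such $y$ exists, and $(x) = \bigcap \mathcal{K}_{(x)}$.

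I do not expect a real obstacle here; the argument is elementary once one isolates the right object $y\m$. The only things to be careful about are the routine valuation-domain facts used without comment --- that any two elements are comparable under divisibility, and that one may cancel a nonzero element in an equality of ideals of a domain --- and the systematic tracking of strict versus non-strict inclusions. (Phrased valuation-theoretically, the point is that a principal ideal can acquire an immediate successor in the ideal lattice only when the value group has a least positive element, that is, only when $\m$ is principal; but the divisibility argument above avoids invoking value groups.)
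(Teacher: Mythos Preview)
Your argument is correct, and it is both simpler and stronger than the paper's. The paper proves only the existential statement and splits into two cases according to whether $\m$ is \emph{branched} or \emph{unbranched}: in the branched case it reduces to a rank-one valuation ring and exploits that the value group is a non-discrete subgroup of $\R$ to build a sequence of principal ideals shrinking down to $(x)$; in the unbranched case it shows (as you do) that any nonzero $x$ works, but via a more circuitous construction involving a minimal prime over $(x/y)$. Your approach bypasses the case distinction entirely: by isolating $y\m$ as the candidate intermediate ideal, you reduce the whole lemma to the single cancellation step $(x)=y\m \Rightarrow (z)=\m$, which is exactly where the non-principality hypothesis bites. This yields the stronger conclusion that \emph{every} nonzero $x\in\m$ satisfies $(x)=\bigcap\mathcal K_{(x)}$, and it does so without invoking value groups, the branched/unbranched dichotomy, or any limiting argument. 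The paper's route has the minor virtue of making the valuation-theoretic picture explicit in the rank-one case, but your argument is the cleaner one.
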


\begin{proof}
Either $\m$ is \emph{branched} (in which case there is some prime ideal $\p$ such that $\dim V/\p = 1$) or \emph{unbranched} (in which case $\m$ is not minimal over any principal ideal) \cite[Theorem 17.3 (e)]{Gil-MIT}.

\noindent \textbf{Case 1:} Assume $\m$ is branched.  Then let $\p \subset \m$ with $\dim V/\p = 1$.  Let $x\in \m \setminus \p$.  Since $\p \subsetneq (x)$, we may pass to $V/\p$ and assume that $\dim V = 1$.  Since $\m$ is not principal, the value group $G$ of $V$ is then a non-discrete subgroup of $\R$.  Choose $\alpha>0$ with $\alpha \in G$.  Then there is some increasing sequence $\{\alpha_i\}_{i \in\N}$ with $0<\alpha_i \in G$ such that $\displaystyle \lim_{i \rightarrow \infty} \alpha_i = \alpha$.

Choose $x\in \m$ with $\nu(x) = \alpha$ and $x_i \in \m$ with $\nu(x_i) = \alpha_i$.  Then for each $i$, we have $(x) \subsetneq (x_i)$; hence $(x_i) \in \mathcal K_{(x)}$.  On the other hand, choose $y\in \bigcap \mathcal K_{(x)}$.  Then for each $i$, we have $y \in (x_i)$, so $\nu(y) \geq \alpha_i$.  By the limit statement, it then follows that $\nu(y) \geq \alpha = \nu(x)$, so that $y \in xV$.  This completes the proof that if $\m$ is branched, we can find $x$ with $(x) = \bigcap \mathcal K_{(x)}$.

\noindent\textbf{Case 2:} Hence we may assume $\m$ is unbranched.  Take any $0 \neq x \in \m$.  Let $y \in \m$ with $y \notin (x)$.  Then $(x) \subsetneq (y)$, so $x \in \m y$, whence $x/y \in \m$.  Let $\p$ be minimal over $(x/y)$.  Since $\m$ is unbranched, we have $\m \neq \p$.  Choose $t\in \m \setminus \p$, and write $z := x/t$; we have $x \in (x/y) \subseteq \p \subseteq (t)$, so $z=x/t \in V$.

Now, $x=tz \subseteq \m z$, so $(x) \subsetneq (z)$.  Moreover, $(x/y) \subseteq \p \subsetneq (t)$, so $x/y \in \m t$, whence $z=x/t \in \m y$, so $(z) \subsetneq (y)$.  Thus, $(z) \in \mathcal K_{(x)}$ and $y \notin (z)$, so $y \notin \bigcap \mathcal K_{(x)}$.  Since $y$ was chosen to be an arbitrary element of $\m \setminus (x)$, it follows that $(x) =  \bigcap \mathcal K_{(x)}$.
\end{proof}

\begin{prop}\label{pr:maxextend}
Let $(V,\m) \rightarrow (S,\n)$ be a faithfully flat map of valuation rings, where $S$ is Ohm-Rush over $R$ and $\m$ is not principal.  Then $\m S = \n$.
\end{prop}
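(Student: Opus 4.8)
The plan is to argue by contradiction. Since the map is faithfully flat it is pure, so $\m S \cap V = \m \subsetneq V$; hence $\m S \neq S$, and as $S$ is local, $\m S \subseteq \n$ (so in particular $\m \subseteq \n$). Suppose $\m S \subsetneq \n$ and fix $a \in \n \setminus \m S$ (so $a \neq 0$). Because $S$ is a valuation ring, $\m S = \bigcup_{z \in \m \setminus \{0\}} zS$, so $a \notin zS$ for every $z \in \m \setminus \{0\}$; equivalently, by the dichotomy ``$t \in S$ or $t^{-1} \in S$'' valid in valuation domains, $z/a \in \n$ for every such $z$.

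Now I use that $\m$ is not principal, via Lemma~\ref{lem:nonprincipal}: pick $0 \neq x \in \m$ with $xV = \bigcap \mathcal K_{(x)}$. From the previous paragraph $x/a \in S$, so $b := x/a$ is an element of $S$ with $x = ab$, and $b \notin xS$ because $b/x = 1/a \notin S$ (as $a$ is a nonunit). Ohm-Rushness of $S$ over $V$, via Proposition~\ref{pr:omnibus}(\ref{it:OR12ii}) applied to the family $\mathcal K_{(x)}$ of ideals of $V$, gives
\[
xS = (xV)S = \Bigl(\bigcap_{J \in \mathcal K_{(x)}} J\Bigr)S = \bigcap_{J \in \mathcal K_{(x)}} JS,
\]
so $b \notin xS$ forces $b \notin J_1 S$ for some $J_1 \in \mathcal K_{(x)}$; necessarily $J_1 \subsetneq V$ (else $b \in S = J_1 S$). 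To finish, choose $z \in J_1 \setminus xV$, possible since $xV \subsetneq J_1$. As $V$ is a valuation ring, $z \notin xV$ gives $xV \subsetneq zV$, hence $x = zw$ with $w \in \m$. Then $b = zw/a$, and $zS \subseteq J_1 S$ gives $b \notin zS$, i.e. $w/a = b/z \notin S$; the valuation-domain dichotomy now yields $a/w \in \n$, so $a \in wS \subseteq \m S$ — contradicting $a \notin \m S$. Hence $\m S = \n$.

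The crux is the passage from $a$ to $b = x/a$. One cannot contradict Ohm-Rushness using $a$ itself: precisely because $a \notin \m S$, its valuation in $S$ lies strictly below everything coming from $\m$, so $a$ lies outside $JS$ for \emph{every} ideal $J \subseteq \m$, and Proposition~\ref{pr:omnibus}(\ref{it:OR12ii}) extracts nothing from it. Lemma~\ref{lem:nonprincipal} is what makes the argument go: it produces a \emph{principal} ideal $xV$ equal to $\bigcap \mathcal K_{(x)}$, a ``limit'' of the larger ideals of $V$, whose extension to $S$ is genuinely the intersection $\bigcap_J JS$; dividing $x$ by $a$ carries the offending element across this limit into a place where Ohm-Rushness has content, and then dividing the factorization $x = zw$ through by $a$ converts the failure ``$b \notin zS$'' back into the forbidden membership $a \in wS \subseteq \m S$. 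Everything else is routine bookkeeping with ideals of valuation domains.
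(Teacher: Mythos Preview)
Your proof is correct and follows essentially the same route as the paper's. Both arguments pick $x$ via Lemma~\ref{lem:nonprincipal}, pass to the element $x/a$ (the paper writes $x/y$), and exploit Ohm-Rushness to reach a contradiction; the only cosmetic difference is that the paper shows $\mathcal L_{x/y} = \mathcal K_{(x)}$, concludes $\orc(x/y)=(x)$, and invokes $x/y \in \orc(x/y)S = xS$, whereas you invoke the equivalent intersection characterization from Proposition~\ref{pr:omnibus}(\ref{it:OR12ii}) to find some $J_1 \in \mathcal K_{(x)}$ with $x/a \notin J_1 S$ and unwind from there.
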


\begin{proof}
By Lemma~\ref{lem:nonprincipal}, we may choose $0\neq x \in \m$ such that $(x) = \bigcap \mathcal K_{(x)}$.  If $\m S \neq \n$, then let $y \in \n \setminus \m S$.  We have $x/y \in \n$.  We claim that $\mathcal L_{x/y} = \mathcal K_{(x)}$.

To see this, let $J \in \mathcal L_{x/y}$.  That is, $J$ is an ideal of $V$ with $x/y \in JS$.  Since $y\in \n$, $J \neq (x)$.  On the other hand, $x\in yJS \cap V \subseteq JS \cap V = J$ by purity.  Hence $J \in \mathcal K_{(x)}$.

Conversely, let $J \in \mathcal K_{(x)}$.  That is, $(x) \subsetneq J$.  Let $j \in J \setminus (x)$.  Then $xV \subsetneq jV$, so $x/j \in \m$.  Then $x=jt$, where $t \in \m \subseteq \m S \subseteq yS$; say $t=ys$, $s\in S$.  Then $x=jt = yjs$, so $x/y =js \in JS$.  Thus, $J \in \mathcal L_{x/y}$.

It follows that $\orc(x/y) = \bigcap \mathcal L_{x/y} = \bigcap \mathcal K_{(x)} = (x)$.  By the Ohm-Rush property, then, $x/y \in \orc(x/y)S = xS$, so there is some $u\in S$ with $x/y = xu$, so $x=xyu$, whence $yu=1$, contradicting the fact that $y$ is not a unit.
\end{proof}

\begin{example}\label{ex:nthroot}
Note that the assumption in the above proposition that $\m$ not be principal is necessary.  For an example, let $V = k[\![x]\!]$, $k$ any field, let $n \in \N$ with $n \geq 2$, and let $S = k[\![x^{1/n}]\!]$.  Then $S$ is free of rank $n$ over $V$, hence Ohm-Rush.  It is faithfully flat as it is an inclusion of valuation domains and the maximal ideal of $V$ does not extend to all of $S$.  But $\m S = xS$ is not prime, because $(x^{1/n})^n \in \m S$ even though $x^{1/n} \notin \m S$.
\end{example}

The next lemma is surely known, but we include it for completeness
\begin{lemma}\label{lem:valdescent}
Let $R \rightarrow S$ be a faithfully flat ring map.  If $S$ is a valuation ring (resp. a Pr\"ufer domain), then so is $R$.
\end{lemma}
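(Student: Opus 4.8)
The plan is to establish the valuation-ring case first, essentially formally, and then reduce the Pr\"ufer case to it by localizing at maximal ideals.

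First I would record the two standing facts: a faithfully flat map $R \to S$ is pure, hence injective, so we may regard $R \subseteq S$, and purity gives $JS \cap R = J$ for every ideal $J$ of $R$ (recalled in Section~\ref{sec:basics}); in particular, if $S$ is a domain then so is $R$. Now suppose $S$ is a valuation ring. Then $R$ is a domain, and to finish I would show its ideals are totally ordered: given ideals $I, J$ of $R$, the ideals of $S$ are totally ordered, so (after relabeling) $IS \subseteq JS$, and then $I \subseteq IS \cap R \subseteq JS \cap R = J$ by the purity identity. Hence $R$ is a valuation ring. This part is short.

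For the Pr\"ufer case, suppose $S$ is a Pr\"ufer domain; then $R$ is a domain, and it suffices to prove $R_\m$ is a valuation ring for every maximal ideal $\m$ of $R$. Since $\m S \neq S$ by faithful flatness, I would choose a maximal ideal $\n$ of $S$ with $\m S \subseteq \n$; maximality of $\m$ then forces $\n \cap R = \m$. Because $R \setminus \m \subseteq S \setminus \n$, the composite $R \to S \to S_\n$ factors through a local homomorphism $R_\m \to S_\n$, the ring $S_\n$ is a valuation ring (a localization of the Pr\"ufer domain $S$ at a prime), and $S_\n$ is faithfully flat over $R_\m$. Granting the last point, the valuation case applied to $R_\m \to S_\n$ gives that $R_\m$ is a valuation ring, and since $\m$ was arbitrary, $R$ is Pr\"ufer.

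The one step that needs care --- and where I would expect a referee to want detail --- is checking that $R_\m \to S_\n$ is faithfully flat: $S_\n$ is flat over $S$ and $S$ is flat over $R$, so $S_\n$ is flat over $R$, and an $R_\m$-module is flat over $R$ if and only if it is flat over $R_\m$; faithfulness then follows since $\m S_\n \subseteq \n S_\n \subsetneq S_\n$, so $S_\n / \m S_\n \neq 0$, which for a flat module over the local ring $R_\m$ is precisely faithful flatness. No genuine obstacle arises; the content is in organizing these standard reductions cleanly.
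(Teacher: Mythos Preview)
Your proposal is correct and follows essentially the same approach as the paper: total ordering of ideals via purity for the valuation case, then localization at a maximal ideal lying over $\m$ to reduce the Pr\"ufer case to the valuation case. The paper simply asserts that the induced map $R_\m \to S_\n$ is faithfully flat, whereas you spell out the flatness-by-composition and the local faithfulness check; otherwise the arguments are the same.
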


\begin{proof}
First suppose $S$ is a valuation domain.  Let $I, J$ be ideals of $R$ with $I \neq J$ and $I \nsubseteq J$.  Then by faithful flatness, $IS \nsubseteq JS$ and $IS \neq JS$, so since $S$ is a valuation domain, $JS \subseteq IS$.  Thus by purity of the faithfully flat extension, $J \subseteq I$.  Hence, the ideals of $R$ are linearly ordered, so $R$ is a valuation ring.

If $S$ is a Pr\"ufer domain, let $\m$ be a maximal ideal of $R$.  Since $\m S \neq S$ (by faithful flatness), there is some maximal ideal $\n$ of $S$ with $\n \cap R = \m$.  Then the induced map $R_\m \rightarrow S_\n$ is faithfully flat, so that since $S_\n$ is a valuation ring, the first part of the proof shows that $R_\m$ is also a valuation ring.  Since the map $R \rightarrow S$ is injective (by faithful flatness), $R$ is an integral domain, which moreover is locally valuation.  Hence it is a Pr\"ufer domain.
\end{proof}

Note that in Example~\ref{ex:nthroot}, $S$ is not a content $V$-algebra.  However, if we add the assumption, then we get the following:

\begin{cor}\label{cor:maxextend}
Let $R \rightarrow S$ be a content algebra map, where $S$ is a Pr\"ufer domain and $R$ is not a field.  Then for any maximal ideal $\m$ of $R$, $\m S$ is a maximal ideal of $S$.
\end{cor}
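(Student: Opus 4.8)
The plan is to localize so that both rings become valuation rings, settle the question there, and then contract the conclusion back to $S$. For the preliminary reductions: a content algebra map is faithfully flat, so Lemma~\ref{lem:valdescent} together with the hypothesis that $S$ is a Pr\"ufer domain forces $R$ to be a Pr\"ufer domain; in particular $R_\m$ is a valuation ring for each $\m \in \Max R$, and since $R$ is a domain that is not a field, no such $R_\m$ is a field (in a domain, $\m R_\m = 0$ would force $\m = 0$). Fix $\m \in \Max R$. By faithful flatness $\m S \neq S$, so I can pick a maximal ideal $\n$ of $S$ with $\m S \subseteq \n$, and then $\n \cap R = \m$ by maximality of $\m$. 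Now $S_\n$ is a valuation ring (as $S$ is a Pr\"ufer domain), and I claim $R_\m \to S_\n$ is again a content algebra map: this follows from Proposition~\ref{pr:omnibus}(\ref{it:OR62}) applied to the multiplicative set $W := S \setminus \n$, whose required property ``$\orc(w) \cap W \neq \emptyset$ for all $w \in W$'' holds because $\orc(w) \subseteq \m$ would give $w \in \orc(w)S \subseteq \m S \subseteq \n$ by the Ohm-Rush property; note also that $W \cap R = R \setminus \m$.

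The heart of the argument is the claim that if $(V,\m_V) \to (W,\m_W)$ is a content algebra map of valuation rings with $V$ not a field, then $\m_V W = \m_W$. When $\m_V$ is not principal this is exactly Proposition~\ref{pr:maxextend}, so the case left to treat is $\m_V = \pi V$ with $\pi \neq 0$, which I would handle by contradiction. Suppose $\m_V W \subsetneq \m_W$ and pick $y \in \m_W \setminus \m_V W$. Since $W$ is a valuation ring and $y \notin \pi W = \m_V W$, we have $\pi \in yW$, say $\pi = yz$ with $z \in W$; here $z$ is not a unit of $W$ (otherwise $y = \pi z^{-1} \in \m_V W$), so $z \in \m_W$. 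If $\orc(z) \neq V$ then $\orc(z) \subseteq \m_V$, so the Ohm-Rush property gives $z \in \orc(z)W \subseteq \m_V W = \pi W$, whence $\pi = yz \in y\pi W$, forcing $1 \in yW$ and hence $y$ a unit --- impossible. Thus $\orc(z) = V$; and since a content algebra is semicontent and therefore has a unital content function, $\orc(y) = \orc(zy) = \orc(\pi) = \pi V = \m_V$, the last equality by Proposition~\ref{pr:omnibus}(\ref{it:ORflat}). But then $y \in \orc(y)W = \m_V W$, contradicting the choice of $y$, which proves the claim.

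Finally I would reassemble. Applying the claim to $R_\m \to S_\n$ gives $\m S_\n = \n S_\n$ as ideals of $S_\n$. Since $R \to S$ is in particular a weak content algebra and $\m S \neq S$, Proposition~\ref{pr:omnibus}(\ref{it:Ruprimes}) shows that $\m S$ is a prime ideal of $S$, and it is contained in $\n$; a prime ideal of $S$ contained in $\n$ equals the contraction of its extension to $S_\n$, so $\m S = (\m S)S_\n \cap S = \m S_\n \cap S = \n S_\n \cap S = \n$. As $\n$ was an arbitrary maximal ideal of $S$ lying over $\m S$, this shows $\m S$ is maximal. I expect the main obstacle to be the principal case of the central claim --- the non-principal case is already covered by Proposition~\ref{pr:maxextend}, so the genuinely new input is the valuation-ring computation above, together with the routine-but-delicate bookkeeping of pushing a content algebra structure through a localization and then contracting back.
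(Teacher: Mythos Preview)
Your proof is correct and follows the same overall architecture as the paper's: reduce to a local content algebra map $(V,\m_V)\to(W,\m_W)$ between valuation rings via Proposition~\ref{pr:omnibus}(\ref{it:OR62}), handle the non-principal case by Proposition~\ref{pr:maxextend}, and contract back using that $\m S$ is prime. The one substantive difference is your treatment of the principal case $\m_V=\pi V$. The paper dispatches this in one line: since a content algebra is weak content, $\m_V W=\pi W$ is a nonzero principal \emph{prime} ideal of the valuation ring $W$, and such an ideal is automatically maximal by \cite[Theorem 17.3(a)]{Gil-MIT}. You instead give a direct contradiction argument using only the Ohm-Rush and unital-content properties, avoiding any appeal to structural facts about valuation rings outside the paper. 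Both are valid; the paper's route is shorter and leans on a standard reference, while yours is self-contained within the content-algebra machinery already developed. Your expectation that the principal case would be ``the main obstacle'' turned out to be slightly pessimistic --- once one remembers that principal primes in valuation rings are maximal, it is the easier of the two cases.
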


\begin{proof}
First assume that $(S,\n)$ is local.  Then by Lemma~\ref{lem:valdescent}, $R$ is a valuation domain.  If $\m$ is principal, then since $\m S$ is a nonzero prime principal ideal of the valuation ring $S$, it must be the maximal ideal of $S$ by \cite[Theorem 17.3(a)]{Gil-MIT}.  On the other hand, if $\m$ is not principal, then $\m S = \n$ by Proposition~\ref{pr:maxextend}.

For the general case, since $S$ is faithfully flat over $R$, there is some maximal ideal $\n$ of $S$ with $\m S \subseteq \n$.  Then $R_\m \rightarrow S_\n$ is a content algebra map with $S_\n$ a valuation ring, so by the previous paragraph we have $\m S_\n = \n S_\n$. But then since $\m S$ is prime, we have $\m S = \n$.
\end{proof}

For examples where Corollary~\ref{cor:maxextend} applies, see Proposition~\ref{pr:irredfield} with $n=1$, as well as Theorem~\ref{thm:valgroups}.

\begin{prop}\label{pr:cval}
Let $(R,\m) \rightarrow (S,\n)$ be a content algebra, where $S$ is a valuation domain.  Then for any $g\in S$, we have $\orc(g) = gS \cap R$.
\end{prop}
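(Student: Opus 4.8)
The plan is to prove the (apparently stronger) equality $gS=\orc(g)S$; once that is in hand the proposition is immediate, since a content algebra is faithfully flat and hence pure, so $gS\cap R=\orc(g)S\cap R=\orc(g)$, using $JS\cap R=J$ for ideals $J$ of $R$. At the outset I would dispose of the case $g=0$ (both sides are $0$) and record that $R$ is a valuation domain, hence local, by Lemma~\ref{lem:valdescent}. I would also assume $R$ is not a field; this is necessary for the statement, e.g.\ $K\subseteq K[\![x]\!]$ is a content algebra with $\orc(x)=K\neq 0=xK[\![x]\!]\cap K$.

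Fix $0\neq g\in S$. The first step, which I expect to be the crux, is to show $\orc(g)$ is principal. Since $S$ is Ohm-Rush, $g\in\orc(g)S$, so $g=\sum_{i=1}^n c_is_i$ for \emph{finitely many} $c_i\in\orc(g)\subseteq R$ and $s_i\in S$, not all $c_i$ zero. As $R$ is a valuation domain, one of finitely many elements divides all the others, so after relabeling $c_1$ generates $(c_1,\dots,c_n)$ and each $c_i$ is an $R$-multiple of $c_1$; hence $g=c_1\sigma$ with $\sigma:=\sum_i(c_i/c_1)s_i\in S$. Then $c_1R\in\mathcal L_g$, so $\orc(g)\subseteq c_1R$, while $c_1\in\orc(g)$ gives the reverse inclusion; thus $\orc(g)=c_1R=:rR$, with $r\neq 0$ since $g\neq 0$ forces $\orc(g)\neq 0$. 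The key idea is just that membership of $g$ in the \emph{extended} ideal $\orc(g)S$ is witnessed by finitely many coefficients from $R$, which over a valuation domain collapse to a single generator.

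The second step is to show $\sigma$ is a unit of $S$; granting this, $gS=r\sigma S=rS=(rR)S=\orc(g)S$, which finishes the proof by the purity argument above. From $g=r\sigma$ and Proposition~\ref{pr:omnibus}(\ref{it:ORflat}) I get $rR=\orc(g)=\orc(r\sigma)=r\,\orc(\sigma)$, and cancelling $r$ in the domain $R$ gives $\orc(\sigma)=R$. Now Corollary~\ref{cor:maxextend} applies: $S$ is a valuation (hence Pr\"ufer) domain and $R$ is not a field, so $\m S$ is a maximal ideal of $S$, and since $S$ is local this forces $\m S=\n$. If $\sigma$ were a nonunit it would lie in $\n=\m S$, so $\m\in\mathcal L_\sigma$ and $\orc(\sigma)\subseteq\m\subsetneq R$, contradicting $\orc(\sigma)=R$. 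Hence $\sigma$ is a unit, as needed. After the principality step, everything else is routine bookkeeping with purity and Corollary~\ref{cor:maxextend}.
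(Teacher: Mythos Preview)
Your proof is correct and follows essentially the same route as the paper's: show $\orc(g)$ is principal, write $g=r\sigma$, use flatness to get $\orc(\sigma)=R$, invoke $\m S=\n$ to conclude $\sigma$ is a unit, and finish by purity. Your principality argument is more explicit (the paper simply asserts that content ideals are finitely generated and hence principal over a valuation ring), and your framing via the intermediate equality $gS=\orc(g)S$ is a slight repackaging of the paper's direct verification that $g\in(gS\cap R)S$, but the substance is identical.

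Your observation that one must assume $R$ is not a field is well taken: the paper's proof silently uses $\n=\m S$, which is exactly Corollary~\ref{cor:maxextend} and requires that hypothesis; your counterexample $K\hookrightarrow K[\![x]\!]$ shows the statement genuinely fails otherwise.
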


\begin{proof}
If $g \in S \setminus \n$, then $g$ is a unit, so $\orc(g) = R = S \cap R = gS \cap R$.  So we may assume $g\in \n$.  But $\n = \m S$, so $\orc(g) \subseteq \m$.  Since $R$ is a valuation ring by Lemma~\ref{lem:valdescent}, and since $\orc(g)$ is a finitely generated ideal of $R$, it must be principal. Say $\orc(g) = rR$.  Then $g \in \orc(g) S = rS$, so there is some $s\in S$ with $g=rs$.  But then \[
rR = \orc(g) = \orc(rs) = r\orc(s),
\]
where the last equality is by Proposition~\ref{pr:omnibus}(\ref{it:ORflat}) so by cancellation, $\orc(s) = R$.  If $s\in \n=\m S$, we would have $\orc(s) \subseteq \m$.  Hence $s$ is a unit of $S$.  Thus, $r = gs^{-1} \in gS \cap R$, so $g \in rV \subseteq (gS \cap R)S$.  Now let $J$ be any ideal with $g \in JS$.  Then $gS \cap R \subseteq JS \cap R = J$ by purity.  Hence $\orc(g) = gS \cap R$.
\end{proof}

\begin{example}\label{ex:polynomial}
The above property is very special to valuation domains; it fails in general even for non-local principal ideal domains.

For instance, Let $L/K$ be an algebraically closed field extension (i.e. the only elements of $L$ algebraic over $K$ are already in $K$), $R := K[x]$, $S := L[x]$, and let $\orc = \orc_{SR}$ be the Ohm-Rush content function.  Since $L$ is free as a $K$-module, it follows that $S$ is free (and hence Ohm-Rush) over $R$ via the same generators.  Let $g$ be a monic irreducible polynomial in $L[x]$.  Let $I$ be an ideal of $K[x]$ with $g \in IS$. Since $R$ is a principal ideal domain, we have $I=(f)$ for some $f\in K[x]$. Also, $g\in fS$.  But since $g$ is irreducible, it follows that $f$ is a unit of $S$, whence $f \in K^\times$.  That is, $I = R$.  Since $I$ was arbitrary, we have $\orc(g) = R$.  On the other hand, $gS \cap R = 0$.  To see this, suppose $h \in (gS \cap R)\setminus \{0\}$.  Let $h=h_1 \cdots h_t$ be the unique factorization in $R$ into monic irreducibles.  Since irreducibles in $R$ remain irreducible in $S$ by Proposition~\ref{pr:irredfield}, this is also the unique factorization into monic irreducibles in $S$.  But then for some $1\leq j \leq t$, we have $g=h_j$, contradicting the fact that $g\notin R$.

This becomes a limiting example for Proposition~\ref{pr:cval} because of the following result.
\end{example}

\begin{prop}\label{pr:irredfield}
Let $L/K$ be a field extension, let $n$ be a positive integer, let $R := K[x_1]$, $S:= L[x_1, \ldots, x_n]$, and let $R \rightarrow S$ be the natural ring extension.  Then $S$ is free as an $R$-module, but it is a content algebra if and only if $K$ is algebraically closed in $L$.
\end{prop}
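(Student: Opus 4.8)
The plan is to use Proposition~\ref{pr:ES47} to turn the content-algebra condition into a polynomial-irreducibility statement, then settle each implication directly.

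First I would record the easy structural facts. Choosing a $K$-vector-space basis $\{e_\lambda\}$ of $L$, the products $e_\lambda x_2^{a_2}\cdots x_n^{a_n}$ form an $R$-basis of $S = L[x_1,\ldots,x_n]$ over $R = K[x_1]$, so $S$ is free (and nonzero, since $n\geq 1$), hence faithfully flat and Ohm-Rush over $R$. As $R$ is a PID, and in particular a Pr\"ufer domain, Proposition~\ref{pr:ES47} says that $S$ is a content $R$-algebra if and only if $\m S\in\Spec S$ for every maximal ideal $\m$ of $R$. The maximal ideals of $K[x_1]$ are exactly the $(p)$ with $p\in K[x_1]$ monic irreducible, and $S/(p)S = \big(L[x_1]/(p)\big)[x_2,\ldots,x_n]$ is a domain if and only if $L[x_1]/(p)$ is a domain if and only if $p$ is irreducible in $L[x_1]$. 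So the statement reduces to: \emph{every monic irreducible $p\in K[x_1]$ stays irreducible in $L[x_1]$ if and only if $K$ is algebraically closed in $L$.}

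For the ``only if'' direction I would argue by contraposition: if $\alpha\in L\setminus K$ is algebraic over $K$, its minimal polynomial $f$ over $K$ is monic, irreducible over $K$, of degree $\geq 2$, but over $L$ it is divisible by $x_1-\alpha$, hence reducible; then $L[x_1]/(f)$, and therefore $S/(f)S$, has zero divisors, so $(f)S\notin\Spec S$ and $S$ is not a content algebra. For the ``if'' direction---the crux---assume $K$ is algebraically closed in $L$ and let $p\in K[x_1]$ be monic irreducible, with factorization $p = q_1\cdots q_r$ into monic irreducibles in $L[x_1]$. Each root (in a fixed algebraic closure of $L$) of any $q_j$ is a root of $p$, hence algebraic over $K$; the coefficients of the monic polynomial $q_j$ are, up to sign, elementary symmetric functions of its roots, hence algebraic over $K$; but they lie in $L$, so by hypothesis they lie in $K$. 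Thus every $q_j\in K[x_1]$, so $p = q_1\cdots q_r$ is a factorization into positive-degree polynomials in $K[x_1]$; irreducibility over $K$ forces $r=1$, i.e.\ $p$ is irreducible over $L$.

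The point I expect to be most important to get right is the reduction itself: the argument in the ``if'' direction works precisely because $R$ has a single indeterminate, so every residue field $R/\m$ is a simple (monogenic) extension of $K$. This fails for $R=K[x_1,x_2]$---there a maximal ideal can have residue field like $K(s^{1/p},t^{1/p})$, which is not monogenic, and Example~\ref{ex:aclosed} shows that the conclusion indeed breaks down there. Once the one-variable reduction is made, the rest is short and elementary.
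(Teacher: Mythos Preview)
Your proposal is correct and follows essentially the same approach as the paper: both reduce via Proposition~\ref{pr:ES47} to the question of whether monic irreducibles in $K[x_1]$ remain irreducible in $L[x_1]$, and both settle that by showing any monic factor has coefficients that are algebraic over $K$ yet lie in $L$, hence lie in $K$. The only cosmetic differences are that the paper phrases freeness via $S = B \otimes_K R$ with $B = L[x_2,\ldots,x_n]$, and in the ``if'' direction works with a single two-factor splitting $f=gh$ rather than a full irreducible factorization, invoking the UFD property of $S$ to conclude $fS$ is prime rather than your explicit isomorphism $S/(p)S \cong (L[x_1]/(p))[x_2,\ldots,x_n]$.
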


\begin{proof}
Since $B := L[x_2, \ldots, x_n]$ is free as a $K$-module, the same holds for $S = B \otimes_K R$ over $R$.  In particular, $S$ is Ohm-Rush and flat over $R$.

Suppose $K$ is not algebraically closed in $L$.  Write $x=x_1$.  Let $\alpha\in L$ be algebraic over $K$.  Then there is some irreducible polynomial $f\in K[x]$ of degree at least two such that $f(\alpha)=0$.  In particular, $f$ factors in $L[x]$ as $f(x) = (x-\alpha)g(x)$, $g\in L[x]$.  Let $\p = fR$.  Then $\p$ is prime, but $\p S$ is not prime because $(x-\alpha)g \in \p S$, $x-\alpha \notin \p S$, and $g \notin \p S$ (e.g. for degree reasons).  Hence $S$ is not a content $R$-algebra.

Suppose on the other hand that $K$ is algebraically closed in $L$.  Then all irreducible polynomials over $K$ remain irreducible over $L$.  To see this,  let $f \in K[x]$ be irreducible.  Without loss of generality, we may assume $f$ is monic. Let $L'$ be an algebraic closure of $L$, and let $K'$ be an algebraic closure of $K$ in $L'$.  Then $K'$ is algebraically closed.  Let $f=\prod_{i=1}^n (x-\alpha_i)$ be the unique factorization of $f$ into linear factors in $K'[x]$.  Say $f=gh$ with $g, h \in L[x]$ and monic.  Without loss of generality (by changing order of the factors), $g=\prod_{i=1}^s (x-\alpha_i)$ and $h=\prod_{i=s+1}^n (x-\alpha_i)$.  In particular, the coefficients of $g$ and $h$ are in $K' \cap L=K$, where the equality holds by hypothesis. That is, $g, h$ are monic polynomials in $K[x]$.  Since $f$ is irreducible in $K[x]$, either $g=f$ or $h=f$.  Thus $f$ is irreducible in $L[x]$.

Accordingly, let $\m$ be a maximal ideal of $R$. Then $\m = (f)$, where $f(x)\in K[x]$ is an irreducible polynomial.  But since $f$ is also irreducible in $L[x]$, hence in $S$, and since irreducible polynomials generate prime ideals in $S$ by virtue of $S$ being a UFD, we have $\m S = fS$ is prime.  Then by Proposition~\ref{pr:ES47}, $S$ is a content $R$-algebra.
\end{proof}

\begin{rem}
We note again the striking contrast between Proposition~\ref{pr:irredfield} (the one-variable case) and Example~\ref{ex:aclosed} (the two-variable case).
\end{rem}

\begin{thm}\label{thm:valgroups}
Let $(V,\m) \subseteq (S,\n)$ be an inclusion of valuation domains, with $\m \neq 0$.  Then $S$ is a content $V$-algebra if and only if the induced homomorphism of value groups is an isomorphism.
\end{thm}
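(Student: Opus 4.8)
The plan is to reduce the statement to a question about \emph{surjectivity} of the induced map $G_V \to G_S$ on value groups, the ``inclusion'' half being automatic in both directions. Write $F := \Frac(V)$, $E := \Frac(S)$, and let $\nu$ be the valuation of $S$, so $G_S = \nu(E^\times)$, and $G_V = \nu(F^\times)$ once we know $V = S \cap F$. There are two dual reductions. (i) If $V \subseteq S$ is faithfully flat then $V = S \cap F$: for $a,b \in V$ with $b \neq 0$ and $a/b \in S$ we have $a \in (bV)S$, so $a \in bV$ by purity and $a/b \in V$; hence $G_V \hookrightarrow G_S$ is injective. (ii) Conversely, if $G_V \to G_S$ is injective, set $W := S \cap F$, a valuation ring of $F$ containing $V$; then $W^\times = S^\times \cap F^\times = V^\times$, the last equality being exactly the injectivity, and if $x \in W$ with $x \neq 0$ and $x \notin V$ then $x^{-1} \in V \subseteq W$, so $x \in W^\times = V^\times$, a contradiction; thus $W = V$. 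So in either direction I may assume $V = S \cap F$, and then ``$G_V \to G_S$ is an isomorphism'' means precisely $\nu(F^\times) = \nu(E^\times)$.

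\textbf{Only if.} Assume $S$ is a content $V$-algebra. It is faithfully flat, so by (i) $V = S \cap F$ and $G_V \hookrightarrow G_S$; it remains to show this is onto. Since $\m \neq 0$, $V$ is not a field, so Corollary~\ref{cor:maxextend} gives $\m S = \n$. Fix $0 \neq g \in S$. In a content algebra $\orc(g)$ is finitely generated, and it is nonzero since $g \in \orc(g)S$; being a nonzero finitely generated ideal of the valuation ring $V$ it is principal, say $\orc(g) = rV$ with $r \neq 0$. Write $g = ru$ with $u \in S$ (possible since $g \in \orc(g)S = rS$). By Proposition~\ref{pr:omnibus}(\ref{it:ORflat}), $rV = \orc(g) = \orc(ru) = r\,\orc(u)$, so $\orc(u) = V$ by cancellation; if $u \in \n = \m S$ then $\orc(u) \subseteq \m \subsetneq V$, a contradiction, so $u$ is a unit of $S$. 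Hence $\nu(g) = \nu(r) \in \nu(F^\times)$, and surjectivity follows.

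\textbf{If.} Assume $G_V \to G_S$ is an isomorphism; by (ii), $V = S \cap F$ and $\nu(E^\times) = \nu(F^\times)$. Being a domain over a valuation ring, $S$ is torsion-free hence flat over $V$; each $m \in \m$ has $\nu(m) > 0$, so $\m S \subseteq \n \neq S$ and $S$ is faithfully flat. Next, $S$ is Ohm-Rush over $V$ with $\orc(s) = sS \cap V$: for an ideal $I$ of $V$ with $s \in IS$, purity gives $sS \cap V \subseteq IS \cap V = I$; and surjectivity supplies $a \in V$ with $\nu(a) = \nu(s)$, so $a \in sS \cap V$ and $s = a \cdot (s/a)$ with $s/a \in S^\times$, whence $s \in (sS \cap V)S$. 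Thus $sS \cap V$ is the least ideal of $V$ whose extension contains $s$. The same use of surjectivity, now choosing $m \in \m$ with $\nu(m) = \nu(y)$ for a given $y \in \n$, shows $\m S = \n$, which is prime. Since $V$ is a Pr\"ufer domain and $S$ is a faithfully flat Ohm-Rush $V$-algebra with $\m S \in \Spec S$, Proposition~\ref{pr:ES47} gives that $S$ is a content $V$-algebra.

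\textbf{Main obstacle.} The delicate point is the surjectivity in the ``only if'' direction. The content formula $\orc(g) = gS \cap V$ of Proposition~\ref{pr:cval} is not enough on its own: a value $\nu(g)$ may lie in $G_S$ strictly between consecutive elements of $G_V$ and still cut out a \emph{principal} ideal $gS \cap V$ of $V$, so membership $\nu(g) \in G_V$ cannot be read off from that ideal alone. One must instead extract the factorization $g = r \cdot (\text{unit})$, which pins $\nu(g)$ down inside $G_V$, and this is exactly where the hypothesis $\m \neq 0$ enters (through Corollary~\ref{cor:maxextend}, to know $\n = \m S$). Without that hypothesis the statement fails: for $\Q \subset \Q(t)$, the ring $\Q(t)$ is trivially a content $\Q$-algebra while the value groups ($0$ and $\Z$) are not isomorphic.
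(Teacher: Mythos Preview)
Your proof is correct and takes essentially the same approach as the paper: both directions rest on factoring $g = r \cdot (\text{unit of } S)$ with $r \in V$, obtained from principality of $\orc(g)$ together with $\n = \m S$ (Corollary~\ref{cor:maxextend}), and the converse concludes via Proposition~\ref{pr:ES47}; your preliminary reduction to $V = S \cap F$ is a tidy repackaging of the paper's direct injectivity check, but the substance is identical. One small slip in your closing remark: $\Q(t)$ is a field, so its value group is $0$, not $\Z$; the intended counterexample should have $S$ a genuine DVR, e.g.\ $\Q \subseteq \Q[t]_{(t)}$.
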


\begin{proof}
First suppose $S$ is a content $V$-algebra.  To show that the induced homomorphism on value groups is an isomorphism, it suffices to show that the induced  monoid map from the nonnegative elements of the value group of $V$ to the nonnegative elements of the value group of $S$ is injective and surjective.

For injectivity, let $x, y \in V$ with $\nu_S(x) = \nu_S(y)$ -- i.e. $xS = yS$.  Then there is some unit $u$ of $S$ such that $x=uy$.  Hence $x \in yS \cap V = yV$ by purity, and since $u^{-1} \in S$, the equation $y=u^{-1}x$ implies that $y \in xS \cap V = xV$.  Thus, $xV = yV$; hence $\nu_V(x) = \nu_V(y)$.

For surjectivity, let $g \in S$.  Then by Proposition~\ref{pr:cval} (and its proof), we have $\orc(g) = gS \cap V = rV$ for some $r\in V$.  Thus $g \in \orc(g)S = rS$.  Say $g=rs$ for some $s\in S$.  Then $rV = \orc(g) = \orc(rs) = r\orc(s)$, so that $\orc(s) = V$.  But since $\n = \m S$, it follows that $s$ is a unit, whence $gS = rS$.  Hence, the element of the value group of $V$ corresponding to $r$ maps to the element of the value group of $S$ corresponding to $g$.

Next, we prove the converse.  So suppose the map of value groups is an isomorphism.  Flatness of the map $V \rightarrow S$ comes from the fact that it is an inclusion of domains, hence torsion-free, hence (since $V$ is Pr\"ufer) flat.  The map is \emph{faithfully} flat because for any $r\in \m$, we have $\nu_V(r)>0$, hence $\nu_S(r) >0$ so $r\in \n$.  On the other hand, for any $g \in \n$, there is some $r \in \m$ with $\nu_S(g) = \nu_V(r) = \nu_S(r)$, so that $gS = rS \subseteq \m S$.  Hence $\n=\m S$.  To complete the proof (using \cite[Proposition 4.7]{nmeSh-OR}), we need only show that the map is Ohm-Rush.  By what we have already shown, for all $g\in S$ we have $g \in (gS \cap V)S$  Thus, $\orc(g) = gS \cap V$ and $g \in \orc(g)S$.  
\end{proof}

\begin{thm}\label{thm:homeo}
Let $R \rightarrow S$ be a content algebra map, where $S$ is a valuation ring and $R$ is not a field.  Then the induced map $\Spec S \rightarrow \Spec R$ is a homeomorphism.
\end{thm}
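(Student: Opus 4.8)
The plan is to exhibit the inverse of the structure map explicitly and to verify that both it and the structure map are continuous. First I would collect the structural facts. Since $R \to S$ is faithfully flat and $S$ is a valuation ring, Lemma~\ref{lem:valdescent} shows that $R$ is a valuation domain; in particular $R$ is local, and its maximal ideal $\m$ is nonzero since $R$ is not a field. Likewise $S$, being a valuation ring, is local with some maximal ideal $\n$. A valuation ring is a Pr\"ufer domain, so Corollary~\ref{cor:maxextend} applies and yields $\m S = \n$.

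The heart of the argument is the claim that \emph{every} ideal of $S$ is extended from $R$. Given an ideal $\mathfrak{a}$ of $S$ and $h \in \mathfrak{a}$, Proposition~\ref{pr:cval} gives $\orc(h) = hS \cap R$, and the Ohm-Rush property gives $h \in \orc(h)S = (hS \cap R)S \subseteq (\mathfrak{a} \cap R)S$; since the opposite inclusion is automatic, $\mathfrak{a} = (\mathfrak{a} \cap R)S$. (Alternatively one can avoid Proposition~\ref{pr:cval} and argue directly: $\orc(h)$ is a finitely generated ideal of the B\'ezout domain $R$, hence principal, say $\orc(h) = rR$ with $r \in \m$; writing $h = rs$ and using $r\,\orc(s) = \orc(rs) = \orc(h) = rR$ forces $\orc(s) = R$, so $s \notin \m S = \n$ is a unit of $S$ and $r = hs^{-1} \in \mathfrak{a} \cap R$.) It follows that the continuous structure map $f \colon \Spec S \to \Spec R$, $Q \mapsto Q \cap R$, is a bijection: it is injective because $Q = (Q \cap R)S$ for every prime $Q$ of $S$, and it is surjective with inverse $\p \mapsto \p S$ because $\p S$ is prime — the alternative $\p S = S$ allowed by Proposition~\ref{pr:omnibus}(\ref{it:Ruprimes}) being excluded by purity — and satisfies $\p S \cap R = \p$.

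It then remains only to see that $f$ is a closed map, since a continuous closed bijection is automatically a homeomorphism. For an ideal $\mathfrak{a}$ of $S$ I would check that the closed set $\{Q \in \Spec S : \mathfrak{a} \subseteq Q\}$ has image $\{\p \in \Spec R : \mathfrak{a} \cap R \subseteq \p\}$: one inclusion is immediate, and for the other, if $\p \supseteq \mathfrak{a} \cap R$ then $\p S$ is a prime of $S$ containing $(\mathfrak{a} \cap R)S = \mathfrak{a}$ with $\p S \cap R = \p$, so $\p = f(\p S)$ lies in the image. Thus images of closed sets are closed and the proof is done. I expect the only substantive step to be the extension claim in the second paragraph: this is precisely where the full content-algebra hypothesis is used — faithful flatness (for purity), the Ohm-Rush membership $h \in \orc(h)S$, principality of content ideals over the B\'ezout ring $R$, and Corollary~\ref{cor:maxextend} together with Proposition~\ref{pr:cval} to compute $\orc(h) = hS \cap R$ — and it is also where the hypothesis ``$R$ is not a field'' is needed, namely to invoke Corollary~\ref{cor:maxextend}. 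Everything after it is formal point-set topology.
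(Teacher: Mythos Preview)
Your proof is correct and follows essentially the same route as the paper: both arguments use Lemma~\ref{lem:valdescent} to make $R$ a valuation ring, Corollary~\ref{cor:maxextend} to get $\m S=\n$, and Proposition~\ref{pr:cval} to show ideals of $S$ are extended, then verify bijectivity and closedness of the spectral map. The only difference is organizational: you prove the single claim ``every ideal of $S$ is extended'' up front and use it for both injectivity and closedness, whereas the paper cites Lemma~\ref{lem:extended} for injectivity of primes and then re-derives the extension fact inside the closedness argument---but these are the same ingredients.
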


\begin{proof}
By Lemma~\ref{lem:valdescent}, $R$ is a valuation ring.  Let $\phi: \Spec S \ra \Spec R$ be the spectral map.  We first show that $\phi$ is a bijection.  Surjectivity arises from the fact that for any $\p \in \Spec R$, $\p S \in \Spec S$ and $\phi(\p S) = \p S \cap R = \p$ by purity.

For injectivity, it suffices to show that \emph{every} prime ideal of $S$ is extended from $R$, since any two distinct primes of $R$ extend to distinct primes of $S$ by faithful flatness.  But since the maximal ideal of $S$ is extended from $R$ (by Corollary~\ref{cor:maxextend}), the claim follows from Lemma~\ref{lem:extended}.

Finally, since $\phi$ is continuous, it suffices to show that it is a \emph{closed} map.  Indeed, we claim that for any ideal $I$ of $S$, we have $\phi[$V$_S(I)] = $V$_R(I \cap R)$.  If $I \subseteq P$, then of course $I \cap R \subseteq P \cap R$.  Conversely, let $\p \in \Spec R$ with $I \cap R \subseteq \p$.  Let $g \in I$.  Then by Proposition~\ref{pr:cval}, \[
g \in \orc(g)S = (gS \cap R)S \subseteq (I \cap R)S \subseteq \p S.
\]
Hence $I \subseteq \p S$, so $\p S \in $V$_S(I)$ and $\phi(\p S) = \p S \cap R = \p$.  
\end{proof}

\begin{rem}
We note that in the above, we need not have $V=S$.    As a simple example let $K\subset L$ be distinct fields. Then $S=L[[x]]$ is a content algebra over $K[[x]]$.

Similarly, let $G$ be any ordered group, let $K \subseteq L$ be a field extension, consider the field extension $K(G) \rightarrow L(G)$, and let $R$, $S$ be the subrings of $K(G)$, $L(G)$ corresponding to the elements of nonnegative $G$-value.  Then by Theorem~\ref{thm:valgroups}, the inclusion $R \subseteq S$ of valuation domains is a content algebra map, and hence the map $\Spec S \rightarrow \Spec R$ is a homeomorphism.

On the other hand, the result is special to valuation domains.  For consider the map $R=K[x] \rightarrow L[x]=S$ from Example~\ref{ex:polynomial} (i.e., with $L$ purely transcendental over $K$).  As long as $K \neq L$, this map does not induce a homeomorphism or even a bijection of prime spectra.  For choose some monic irreducible polynomial $g \in L[x] \setminus K[x]$ (e.g. one can always choose $x-u$ where $u\in L \setminus K$).  Then $P = gS$ is prime, and $P \cap R = 0R = 0S \cap R$, so the Spec map is not injective.  It is also not a \emph{closed} map, since $0R \in \phi(V_S(P))$, but $\Spec R = V_R(0) \nsubseteq \phi(V_S(P))$.

One can even create an example where the \emph{base} ring is a valuation ring by setting $W := R \setminus \m$, where $\m$ is a maximal ideal of $R$.  Then in the $\Spec$ map, the maximal ideal $\m S_W$ contracts to $\m R_W$, but all other maximal ideals of $S_W$ contract to $0R_W$, so again the Spec map is not injective and fails to be closed.
\end{rem}

\begin{example}\label{ex:end}
 We present a method to construct semilocal B\'ezout domains that are themselves content algebras over semilocal B\'ezout domains. These examples do not fall into any of the classes explored thus far.

The general construction is as follows.  Let $L/K$ be a field extension.  Let $V_1, \ldots, V_n$ (resp. $W_1, \ldots, W_n$) be pairwise independent valuation rings with fraction field $K$ (resp. $L$), in such a way that each $V_i \subseteq W_i$ and $W_i$ is a content $V_i$-algebra. (Recall %[[CITE]]
that a pair $V, V'$ of valuation rings with common fraction field $F$ are \emph{independent} if there is no ring properly contained in $F$ that contains both $V$ and $V'$.) Set $R := \bigcap_{i=1}^n V_i$ and $S := \bigcap_{i=1}^n W_i$.  We then claim that $R$, $S$ are Pr\"ufer domains (indeed Dedekind domains if each of the $V_i$ is a DVR), and $S$ is a content $R$-algebra.

It follows from \cite[Theorem 107]{Kap-CR} that $R$ and $S$ are B\'ezout domains (indeed principal ideal domains if all the $V_i$ and $W_i$ are DVRs, but by Theorem~\ref{thm:valgroups}, the $W_i$ are DVRs whenever the $V_i$ are).   If $\m_i$ (resp. $\n_i$) is the maximal ideal of $V_i$ (resp. $W_i$) for $1\leq i \leq n$, then the maximal ideals of $R$ (resp. $S$) are $M_i = \m_i \cap R$ (resp. $N_i = \n_i \cap S$), and we have $R_{M_i} = V_i$ (resp. $S_{N_i} = W_i$).  It is then easily seen from independence that any nonzero prime ideal is contained in a unique maximal ideal.   Furthermore, since $S$ is torsion free over $R$, it is flat over $R$. Additionally, since $M_iS \subseteq N_i$, it follows that $S$ is faithfully flat over $R$.

We next show that $S$ is Ohm-Rush over $R$.   Let $I$ be an ideal of $R$ with $g\in IS$, whence $gS \cap R \subseteq IS \cap R = I$ by purity.  Therefore, $ gS\cap R \subseteq  \orc(g)$.  To finish we will show that $g\in (gS\cap R)S$, from which it follows that $ gS\cap R =  \orc(g)$ and that $S$ is Ohm-Rush over $R$.

To this end let  $X_i := R \setminus M_i$, and note that $S_{X_i} = S_{N_i}$.  To see this, it is enough to show that $S_{X_i}$ has unique maximal ideal $N_iS_{X_i}$.  Accordingly, let $P$ be a prime ideal of $S$ such that $PS_{X_i}$ is maximal.  Let $N$ be the unique maximal ideal of $S$ that contains $P$.  Then $P \cap R \subseteq N \cap R$, but also $P \cap R \subseteq M_i$, so that since only one maximal ideal of $R$ can contain $P\cap R$, $M_i = N \cap R$.  Hence $N = N_i$.

Then by Proposition~\ref{pr:cval}, since each $W_i$ is a valuation ring that is content over $V_i$, we have $g \in (gW_i \cap V_i)W_i = (g S_{N_i} \cap R_{M_i})S_{N_i} = (gS_{X_i} \cap R_{X_i})S_{X_i} = ((gS \cap R)S)_{X_i}$, with the last equality following by flatness of localization.  Thus, thought of as $R$-modules, we have $(gS)_{M_i} \subseteq ((gS \cap R)S)_{M_i}$ for all $i$, but also it is clear that $(gS \cap R)S \subseteq gS$.  Since they are locally equal at all maximal ideals of $R$, they are equal.  That is, $gS = (gS \cap R)S$, so $g\in (gS\cap R)S$.

Finally, by Proposition~\ref{pr:ES47}, to show that $S$ is a content $R$-algebra it will suffice to show that maximal ideals of $R$ extend to primes of $S$.  Let $M$ be a maximal ideal of $R$. Then $M = M_i = \m_i \cap R$ for some $1\leq i \leq n$.  By Corollary~\ref{cor:maxextend}, we have $\m_i W_i = \n_i$.  Thus, as before, setting $X_i := R \setminus M_i$, we have \[
(MS)_{X_i} = M_{X_i} S_{X_i} = \m_i W_i = \n_i = N_i S_{N_i} = N_i S_{X_i}.
\]
But also for $j\neq i$, we have $(MS)_{X_j} = M_{X_j} S_{X_j} = W_j = S_{N_j} = N_i S_{N_j} = N_i S_{X_j}$.  On the other hand, $MS = (\m_i \cap R)S \subseteq \n_i \cap S = N_i$.  Therefore again we have locally equal $R$-modules with a containment, so $MS = N_i$, a prime (indeed maximal) ideal of $S$.

When does such independence happen?  For one thing, the valuation rings given by the localizations of a 1-dimensional Pr\"ufer domain (e.g. a Dedekind domain) are always pairwise independent, as there are no local rings between a rank 1 valuation ring and its fraction field.  Moreover, since the resulting rings will be semilocal, they will be B\'ezout (resp. principal ideal) domains.

We next present a concrete example from which one can also see how to construct examples of arbitrary finite rank.

Let $\ell/k$ be a field extension, let $r,s,t,u$ be indeterminates over $\ell$, set $K:= k(r,s,t,u)$ and $L := \ell(r,s,t,u)$.  Let $v_1$ (resp. $w_1$) be the lexicographic $\mathbb Z^2$-valued valuation on $K$ (resp. $L$) determined by $v_1(t) =v_1(u) = {\bf 0} = w_1(t) = w_1(u)$, while $v_1(r) = (1,0) = w_1(r) $, and $v_1(s) =(0,1)= w_1(s)$.  

 Similarly, let $v_2$ (resp. $w_2$) be the lexicographic $\mathbb Z^2$-valued valuation on $K$ (resp. $L$) that is ${\bf 0}$ on $k(r,s)^\times$ (resp. $\ell(r,s)^\times$), while $v_2(t)=(1,0)=w_2(t)$ and  $v_2(u) =(0,1)=w_2(u)$.  Let $V_1, V_2, W_1, W_2$ be the valuation rings associated to the valuations $v_1, v_2, w_1, w_2$ respectively.  Then $V_1$ and $V_2$ are pairwise independent, because if $U$ is a local subring of $K$ that contains both $V_1$ and $V_2$, then since $U$ contains $V_2$, both $r$ and $s$ are units of $U$, but since $U$ also contains $V_1$, it follows that $U=K$.  Similarly, $W_1$ and $W_2$ are independent.  Moreover, the natural maps $V_i \rightarrow W_i$, $i=1,2$, are content algebra maps by Theorem~\ref{thm:valgroups}.  Then by the above, $W_1 \cap W_2$ is a content $(V_1 \cap V_2)$-algebra.
\end{example}

\section*{Acknowledgments}
We are grateful to the anonymous referee for a thorough reading and comments that improved the paper.  In particular, the material in Section~\ref{sec:localglobal} was nowhere near as extensive before the referee made his/her insightful comments. Several of the results therein are essentially due to the referee.

\providecommand{\bysame}{\leavevmode\hbox to3em{\hrulefill}\thinspace}
\providecommand{\MR}{\relax\ifhmode\unskip\space\fi MR }
% \MRhref is called by the amsart/book/proc definition of \MR.
\providecommand{\MRhref}[2]{%
  \href{http://www.ams.org/mathscinet-getitem?mr=#1}{#2}
}
\providecommand{\href}[2]{#2}

\end{document}